\newtheorem{theorem}{Theorem}[section]
\newtheorem{lem}[theorem]{Lemma}
\newtheorem{pro}[theorem]{Proposition}
\newtheorem{prop}[theorem]{Proposition}
\newtheorem{proposition}[theorem]{Proposition}
\newtheorem{coro}[theorem]{Corollary}
\theoremstyle{definition}
\newtheorem{example}[theorem]{Example}
\newtheorem{rem}[theorem]{Remark}
\theoremstyle{plain}
\newtheorem*{namedthm}{\namedthmname}
\newcounter{namedthm}
 \renewenvironment{proof}{{\bfseries Proof.}}{\qed}
\newcommand{\R}{\mathbb{R}}
\newcommand{\C}{\mathbb{C}}
\newcommand{\N}{\mathbb{N}}
\newcommand{\fact}[1]{#1\mathpunct{}!}
\newcommand{\f}{\varphi}
\numberwithin{equation}{section}
\title[ Eigenvalue problem]{The  Eigenvalue Problem for the \\ complex Monge-Amp\`ere operator}
\author{Papa Badiane}
\address{Laboratoire de Mathématiques et Applications;  Université Assane Seck de Ziguinchor BP 523.}
\email{p.badiane4963@zig.univ.sn}
\author{Ahmed Zeriahi}
\address{Institut de Mathématiques de Toulouse; UMR 5219, Université de Toulouse; CNRS, UPS, 118 route de Narbonne, F-31062 Toulouse Cedex 9, France}
\email{ahmed.zeriahi@math.univ-toulouse.fr}
\keywords{Plurisubharmonic function, Complex Monge-Amp\`{e}re operator, Dirichlet Problem, Subsolution, Eigenvalue Problem, Energy Functional.}
\subjclass[2010]{ 32U05, 32W20, 35J66, 35J96}
\begin{document}

\date{\today}
\maketitle  

%\renewcommand{\contentsname}{Table des Mati\`eres}
%\tableofcontents
\setcounter{tocdepth}{1}

\begin{abstract}
 We prove the existence of the first eigenvalue  and an associated eigenfunction with Dirichlet condition for the complex Monge-Amp\`ere operator  on a bounded strongly pseudoconvex domain in $\C^n$. We show that the eigenfunction is  plurisubharmonic,  smooth with bounded Laplacian in $\Omega$  and boundary values $0$. Moreover it is unique up to a positive multiplicative constant.
   
  To this end, we follow the strategy used by P.L. Lions in the real case. However, we have to prove a new  theorem on the existence of solutions for some special complex degenerate Monge-Ampère equations. This requires establishing new a priori estimates of the gradient and Laplacian of such solutions using methods and results of L. Caffarelli, J.J. Kohn, L. Nirenberg and J. Spruck  \cite{CKNS85} and B. Guan \cite{Guan98}.
  
   Finally we provide a Pluripotential variational approach to the problem and using our new existence theorem, we prove a Rayleigh quotient type formula for the first eigenvalue of the complex Monge-Ampère operator.
 \end{abstract}

 \section{introduction}

Let $\Omega\Subset \C^n$ be a bounded strongly pseudoconvex domain of $\C^n$ with smooth boundary $\partial \Omega$ and $0<f \in C^{\infty}(\bar \Omega)$.

 Our goal is to solve the eigenvalue problem with Dirichlet boundary condition for a twisted complex Monge-Ampère operator. 
 
 The problem consists in finding a couple $(\lambda,u)$, with $\lambda > 0$ and $u \in PSH (\Omega) \cap C^0 (\bar \Omega) \cap C^2(\Omega)$ satisfying the following conditions:

\begin{equation}\label{eq1}
\left\lbrace
\begin{array}{lcr}
(dd^cu)^n=(-\lambda u)^n f^n\omega^n & \textnormal{in} & \Omega\\
u=0& \textnormal{ on} & \partial \Omega\\
\Vert u\Vert_{C^0(\bar \Omega)} = 1,& &
\end{array}
\right.
\end{equation}
where  $\omega:= dd^c\vert z \vert^2$ is the standard Kähler form on $\C^n$. 
Here we use the standard differential operators $d = \partial + \partial$ and $d^c := (i \slash 2) (\bar \partial - \partial)$ so that $dd^c = i \partial \bar \partial$.

P. L. Lions solved the existence problem of the first eigenvalue  with  Dirichlet boundary condition for the real Monge-Ampère operator on a smooth bounded strongly convex domain in $\R^N$ (see \cite{Lions86}). He claimed that using the results of L. Caffarelli, J.J. Kohn, L. Nirenberg and J. Spruck  \cite {CKNS85}, all his results extend without changes to the case of the complex Monge-Ampère operator. 

 We were not able to directly apply the results of \cite{CKNS85} as claimed by Lions to solve the problem in the complex case.
Indeed we have to deal with complex Monge-Ampère equations with  a right hand side of the type $\psi_\varepsilon (z,u) := (\varepsilon-\lambda u)^n f(z)^n$ with $\varepsilon > 0$. Since these functions are decreasing  in $u$ and $\psi_\varepsilon (z,0) \equiv \varepsilon f(z)^n$ is completely degenerate at the boundary as $\varepsilon \to 0^+$, the existence results  of \cite{CKNS85}  and \cite{Guan98} do not apply directly in this case. 

 Instead, we will establish  a  new existence theorem  for degenerate complex Monge-Amp\`ere equations of this type  (see Theorem \ref{thm:new})  based on the fundamental theorem of \cite{CKNS85}  and  new  Laplacian a priori estimates for the inverse Monge-Amp\`ere operator.

\smallskip

Let us state our first main result.
\begin{theorem}\label{thm1}
	Let $\Omega \Subset \C^n$ be a bounded strongly pseudoconvex domain with smooth boundary and $0 < f \in C^{\infty}(\bar \Omega)$ be a smooth positive  function in $\bar \Omega$. 
	
	Then there exists a real number $\lambda_1 = \lambda_1(\Omega,f)>0$ and a function $u_1 \in PSH(\Omega)\cap C^{\infty}(\Omega)  \cap C^{1,\bar 1} (\bar \Omega)$ with $\Vert u_1 \Vert_{C^0(\bar \Omega)} = 1$ such that the pair $(\lambda_1, u_1)$ is the unique  solution to the eigenvalue problem \eqref{eq1}. 
\end{theorem}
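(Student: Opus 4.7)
The plan is to follow Lions's strategy using the new existence theorem (Theorem \ref{thm:new}) for regularized degenerate Monge-Amp\`ere equations. For each $\varepsilon>0$ and $\lambda>0$, Theorem \ref{thm:new} furnishes a solution $u_{\varepsilon,\lambda}\in\PSH(\Omega)\cap C^{\infty}(\Omega)\cap C^{1,\bar 1}(\bar\Omega)$ of
\[
(dd^c u)^n=(\varepsilon-\lambda u)^n f^n\omega^n\quad\text{in }\Omega,\qquad u|_{\partial\Omega}=0,
\]
satisfying $u\le 0$, with uniqueness following from the comparison principle since the right-hand side is decreasing in $u$. The homogeneity substitution $u=\varepsilon v$ reduces everything to $\varepsilon=1$: setting $w_\lambda:=u_{1,\lambda}$ and $m(\lambda):=\Vert w_\lambda\Vert_{C^0(\bar\Omega)}$, the rescaled function $v_\lambda:=w_\lambda/m(\lambda)$ satisfies $\Vert v_\lambda\Vert_{C^0(\bar\Omega)}=1$ and
\[
(dd^c v_\lambda)^n=\bigl(m(\lambda)^{-1}-\lambda v_\lambda\bigr)^n f^n\omega^n.
\]
Thus $m(\lambda)^{-1}$ plays the role of the regularization parameter: forcing $m(\lambda)\to+\infty$ at some critical value $\lambda_1$ will produce an eigenfunction of \eqref{eq1}.

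To identify this critical value, I define
\[
\lambda_1:=\sup\{\lambda>0\setdef m(\lambda)<+\infty\}
\]
and show that $0<\lambda_1<+\infty$ with $m(\lambda)\to+\infty$ as $\lambda\nearrow\lambda_1$. Monotonicity and continuity of $\lambda\mapsto m(\lambda)$ follow from the comparison principle and from the $L^\infty$ stability of Monge-Amp\`ere solutions; $m(0)$ is finite because the classical Dirichlet problem $(dd^c u)^n=f^n\omega^n$ admits a bounded plurisubharmonic solution on the strongly pseudoconvex domain $\Omega$. Finiteness of $\lambda_1$ comes from a linear comparison: the arithmetic-geometric inequality applied to $(u_{j\bar k})$ yields $\Delta w_\lambda\ge c_n(1-\lambda w_\lambda)f$, and testing this against the first Dirichlet eigenfunction of $-\Delta$ on $\Omega$ bounds $\lambda$ in terms of that linear eigenvalue and $\min f$.

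The central step is passing to the limit. For any sequence $\lambda_k\nearrow\lambda_1$, the a priori estimates contained in Theorem \ref{thm:new}, being uniform in the regularization parameter $\varepsilon=m(\lambda_k)^{-1}\to 0$, give a subsequence $(v_{\lambda_k})$ converging in $C^{1,\alpha}(\bar\Omega)$ and with uniformly bounded Laplacian to some $u_1\in\PSH(\Omega)\cap C^{1,\bar 1}(\bar\Omega)$ vanishing on $\partial\Omega$. The uniform boundary-gradient estimate prevents the minimum of $v_{\lambda_k}$ from sliding onto $\partial\Omega$, so $\Vert u_1\Vert_{C^0(\bar\Omega)}=1$ is preserved; weak continuity of the complex Monge-Amp\`ere operator then yields $(dd^c u_1)^n=(-\lambda_1 u_1)^n f^n\omega^n$. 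Interior $C^{\infty}$ smoothness follows from Evans-Krylov and Schauder bootstrap, once the strong maximum principle ensures $-\lambda_1 u_1>0$ inside $\Omega$. Uniqueness up to a positive multiplicative constant is obtained by a concavity/Rayleigh-type identity applied to two normalized solutions, forcing both eigenvalues to agree and the eigenfunctions to coincide. The main obstacle, and precisely the reason Lions's direct appeal to \cite{CKNS85} does not suffice, is the degeneracy of the right-hand side as $\varepsilon\to 0^+$: without the new $\varepsilon$-uniform Laplacian estimate encoded in Theorem \ref{thm:new}, the limit $u_1$ could be trivial or fail to be $C^{1,\bar 1}$ up to the boundary.
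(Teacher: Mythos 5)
Your overall strategy — regularize, normalize, let the regularization parameter degenerate along a critical $\lambda$, and pass to the limit — matches the paper's approach closely (Section 4.1). But there are two substantive gaps.

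First, you write that the a~priori estimates in Theorem~\ref{thm:new} are ``uniform in the regularization parameter $\varepsilon=m(\lambda_k)^{-1}\to0$'' and directly yield a subsequence $v_{\lambda_k}$ with a uniformly bounded Laplacian. This is not automatic. The Laplacian estimate of Proposition~\ref{prop:GlobalEstimate} requires a uniform \emph{strictly negative} supersolution $w$ with $\underline u \le v_{\lambda_k}\le w<0$; without one, Proposition~\ref{prop:BoundaryEstimate} gives no normal–normal bound at the boundary (the Hopf argument there needs $w<0$ inside). What the uniform sub-barrier $Cu_0$ gives immediately is only the $C^1$ bound (Corollary~\ref{coro:GradientEstimate2}). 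The paper then uses Arzelà–Ascoli to conclude $v_{\lambda_j}\to\varphi_1$ uniformly with $\|\varphi_1\|_{C^0}=1$, observes that $-\varphi_1\ge 2/3$ on a small ball $B$ hence $\Delta v_{\lambda_j}\ge c>0$ on $B$ for large $j$, and \emph{only then} produces the missing super-barrier $w$ by solving $\Delta w=c\,\theta_B$, $w|_{\partial\Omega}=0$. This intermediate construction is what makes the uniform $C^{1,\bar1}$ bound, and hence the boundary regularity of $u_1$, non-circular. Your remark that ``the uniform boundary-gradient estimate prevents the minimum from sliding onto $\partial\Omega$'' addresses only the normalization, not the Laplacian bound. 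Relatedly, your opening claim that Theorem~\ref{thm:new} furnishes a solution ``for each $\varepsilon>0$ and $\lambda>0$'' is too strong: Theorem~\ref{thm:new} is conditional on exhibiting a sub- and supersolution, and the paper builds these inductively in $\lambda$ (Steps 2–3), which is precisely why $\mu_1$ could a~priori be finite.

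Second, the uniqueness step is not carried out. You invoke a ``concavity/Rayleigh-type identity,'' but the Rayleigh formula is Theorem~\ref{thm:variational} and its proof uses Theorem~\ref{thm1}, so this route is circular as stated, and no concavity identity is actually exhibited. The paper's uniqueness proof runs through the linearization Lemma~\ref{lem2} (Gaveau's formula gives a matrix $a\in\mathcal A(\Omega)$ with $L_a(t_0\varphi-\varphi_1)=\cdots$), the Hopf lemma to compare $-\varphi$ and $-\varphi_1$ near $\partial\Omega$, and the Berestycki–Nirenberg–Varadhan characterization of the principal eigenvalue (Proposition~\ref{pro3}). This linear machinery is also how the paper identifies the limiting value as $\lambda_1=\inf_a\gamma_1(a)$; your elementary bound via the Laplacian eigenvalue proves $\lambda_1<\infty$ but does not connect to the uniqueness argument the way $\inf_a\gamma_1(a)$ does in Steps 1 and 6.
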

The number $\lambda_1$ will be called the first eigenvalue of the complex Monge-Ampère operator with respect to the volume form  $ d\nu_f := f^n \omega^n$ and $u_1$ will be called its  associated normalized eigenfunction.

\smallskip

Here $C^{1,\bar 1}(\bar \Omega)$ denotes the space of all function $u \in C^{1}(\bar \Omega)$ such that its Laplacian $\Delta u$ in the sense of distributions is a bounded function $\Delta u \in L^{\infty} (\Omega)$  i.e. 
$$
\Vert u \Vert_{C^{1,\bar 1}( \bar \Omega)}:= \Vert u \Vert_{C^0(\bar \Omega)}+ \Vert \nabla u \Vert_{C^0(\bar \Omega)} + \Vert \Delta u \Vert_{L^{\infty}(\Omega)}< + \infty.
$$

By the Calderon-Zygmund theory a function $u \in C^{1,\bar 1}(\bar \Omega)$ satisfies $u \in W^{2,p} (\Omega)$ for any $1 \leq p< \infty$ and  then by  Sobolev spaces  theory (Morrey's Lemma), it follows that $ u \in C^{1, \alpha}(\bar{\Omega})$ for any $\alpha \in]0,1[$ (see \cite{GT98}).

We do not know if the  eigenfunction $u_1$ in our theorem satisfies $u_1 \in C^{1,1}( \bar{\Omega})$.

\smallskip

 On the other hand we develop  a general variational approach to the problem of the existence of a first eigenvalue for more general twisted Monge-Ampère operators. Using this approach and our new existence theorem  we are able  to prove a   Rayley quotient type formula for the eigenvalue. This is our second main result. 

\smallskip

\begin{theorem}  \label{thm:variational} Let $\Omega \Subset \C^n$ be a bounded strongly pseudoconvex domain with smooth boundary and $0 < f \in C^{\infty}(\bar \Omega)$ be a smooth positive  function in $\bar \Omega$. 
Let    $(\lambda_1,u_1)$ be  the normalized solution of the eigenvalue problem (\ref{eq1}). Then 
$$
 \lambda_1^n = \frac{\int_\Omega (-u_1) (dd^c u_1)^n}{\int_\Omega (-u_1)^{n +1} d \nu_{f}} = \inf \left\{\frac{\int_\Omega (-\phi) (dd^c \phi)^n}{\int_\Omega (-\phi)^{n +1} d \nu_{f}} ; \phi \in \mathcal E^1(\Omega), \phi \neq 0 \right\},
 $$ 
 where $d \nu_{f} = f^n \omega^n$.
\end{theorem}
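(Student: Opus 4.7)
The first equality in the statement is elementary: multiplying the eigenvalue equation $(dd^c u_1)^n = \lambda_1^n (-u_1)^n d\nu_f$ by $(-u_1)$ and integrating over $\Omega$ yields $\int_\Omega (-u_1)(dd^c u_1)^n = \lambda_1^n \int_\Omega (-u_1)^{n+1} d\nu_f$. Writing $E(\phi) := \int_\Omega (-\phi)(dd^c \phi)^n$, $G(\phi) := \int_\Omega (-\phi)^{n+1} d\nu_f$, and $F(\phi) := E(\phi)/G(\phi)$, this already gives $F(u_1) = \lambda_1^n$, hence $\inf_{\mathcal{E}^1\setminus\{0\}} F \leq \lambda_1^n$.

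The substantial part is the reverse inequality $F(\phi) \geq \lambda_1^n$ for every nonzero $\phi \in \mathcal{E}^1(\Omega)$, and my plan is a direct variational attack rather than a chain of inequalities. Note that a naive combination of the mixed Monge--Amp\`ere inequality of Persson/B\l{}ocki with H\"older applied to the identity $\int(-\phi)(dd^c u_1)^n = \lambda_1^n\int(-\phi)(-u_1)^n d\nu_f$ yields bounds in the wrong direction; both inequalities go upward, so they cannot be combined to lower-bound $E(\phi)$. Instead, since $F$ is invariant under positive scaling $\phi \mapsto c\phi$, I would minimize $E$ on the constraint surface $\{\phi \in \mathcal{E}^1 : G(\phi) = 1\}$. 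Pick a minimizing sequence $\phi_k$; the standard compactness in the Cegrell energy class (sublevel sets of $E$ are relatively compact in $L^1(\Omega)$, after Cegrell and Guedj--Zeriahi) furnishes a subsequence converging in $L^1(\Omega)$ to some $\phi_\infty \in \mathcal{E}^1$. Combining lower semicontinuity of $E$ with continuity of $G$ under appropriate monotone approximations shows that $\phi_\infty$ is a minimizer with $E(\phi_\infty) = \mu$, where $\mu := \inf F$.

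Next I would derive the Euler--Lagrange equation. For any smooth test function $\eta$ vanishing on $\partial\Omega$, repeated integration by parts gives the first variations $\delta E(\phi_\infty; \eta) = -(n+1)\int_\Omega \eta\,(dd^c \phi_\infty)^n$ and $\delta G(\phi_\infty; \eta) = -(n+1)\int_\Omega (-\phi_\infty)^n \eta\,d\nu_f$, so the Lagrange multiplier condition forces $(dd^c \phi_\infty)^n = \mu (-\phi_\infty)^n\,d\nu_f$ in the pluripotential sense, with $\phi_\infty|_{\partial\Omega} = 0$. Here the crucial input enters: Theorem~\ref{thm:new} produces a classical solution (smooth in $\Omega$, of class $C^{1,\bar 1}(\bar\Omega)$) of exactly this degenerate complex Monge--Amp\`ere equation, and a comparison/uniqueness argument identifies $\phi_\infty$ with that regular solution. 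After renormalization, the pair $(\mu^{1/n}, \phi_\infty/\|\phi_\infty\|_{C^0(\bar\Omega)})$ solves the eigenvalue problem \eqref{eq1}, and the uniqueness part of Theorem~\ref{thm1} forces $\mu^{1/n} = \lambda_1$, completing $\inf F = \lambda_1^n$.

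The principal obstacle is the bridge between the weak pluripotential minimizer $\phi_\infty \in \mathcal{E}^1$ obtained variationally and the smooth eigenfunction to which the uniqueness in Theorem~\ref{thm1} applies: a priori $\phi_\infty$ only has finite Monge--Amp\`ere energy, whereas uniqueness is formulated in a strictly smoother class. The new existence theorem (Theorem~\ref{thm:new}) is precisely the tool that closes this gap. A secondary technical difficulty is the rigorous derivation of the Euler--Lagrange equation in $\mathcal{E}^1$: the admissible variations $\phi_\infty + t\eta$ must preserve plurisubharmonicity, which typically requires either smooth strictly plurisubharmonic approximations (e.g.\ via Richberg regularization) or a careful subdifferential calculus for $E$ and $G$ on $\mathcal{E}^1$. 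Once these two issues are handled, the conclusion follows cleanly.
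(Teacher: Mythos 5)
Your architecture is the same as the paper's: prove the first equality by integrating the eigenvalue equation against $(-u_1)$, minimize $E$ on the constraint set $\{ G = 1 \}$ by compactness and lower semicontinuity, write an Euler--Lagrange equation for the minimizer, and then upgrade the regularity via Theorem~\ref{thm:new} so that the uniqueness in Theorem~\ref{thm1} can be invoked. Both of the difficulties you flag are real. On the Euler--Lagrange side the paper handles the plurisubharmonicity constraint by a projection device of Berman--Boucksom--Guedj--Zeriahi: for $\psi \in \mathcal{E}^0(\Omega)$ one considers the path $t \mapsto \phi_t := w + t\psi$ and differentiates $E \circ P$, where $P$ is the psh envelope, using $\frac{d}{dt}(E\circ P)(\phi_t) = \int_\Omega (-\dot{\phi}_t)\,(dd^c P(\phi_t))^n$; Richberg regularization alone does not give you the two-sided derivative at $t=0$ needed to kill the linear term.

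The genuine gap is at the step where you write that ``Theorem~\ref{thm:new} produces a classical solution of exactly this degenerate complex Monge--Amp\`ere equation.'' That theorem is \emph{conditional}: it requires a strict plurisubharmonic subsolution and a bounded supersolution with zero boundary values sandwiching the sought solution, and producing these barriers is where the real work happens. The paper argues by contradiction, assuming $\eta_1 := \mu^{1/n} < \lambda_1 = \mu_1$; this strict inequality is precisely what allows one to invoke Step~3 of the proof of Theorem~\ref{thm1} and obtain a smooth $u_{\eta_1}$ solving $(dd^c u_{\eta_1})^n = (1 - \eta_1 u_{\eta_1})^n f^n \omega^n$ with zero boundary values. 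From this the paper manufactures the strict subsolution $\underline{u} := \eta_1 M \, u_{\eta_1}$ with $M := \max_{\bar\Omega}(-w_1)$, checks $\underline{u} \leq w_1$ by the comparison principle, and verifies strict subsolution-hood by a short computation using $(1-s)^n \geq 1 + (-s)^n$ for $s \leq 0$. Separately, the minimizer $w_1 \in \mathcal{E}^1(\Omega)$ must be shown to lie in $C^0(\bar\Omega)$ with $w_1|_{\partial\Omega} = 0$ before it can serve as the supersolution; the paper gets this from Kolodziej's $L^p$ existence theorem together with Cegrell's uniqueness for solutions in $\mathcal{E}^1(\Omega)$. Only after these constructions does Theorem~\ref{thm:new} apply and deliver a $C^{1,\bar 1}(\bar\Omega)$ solution, whose normalization contradicts the assumed strict inequality $\eta_1 < \lambda_1$ via the uniqueness in Theorem~\ref{thm1}. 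Without this barrier construction and the contradiction scaffolding around it, your invocation of Theorem~\ref{thm:new} does not yet go through.
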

Here $\mathcal E^1(\Omega) \subset PSH(\Omega)$ is the convex positive cone of negative plurisubharmonic functions in $\Omega$ with finite Monge-Ampère energy (see Section 5 for the precise definition).

\smallskip

Let us mention that N.D. Koutev and I.P. Ramadanov considered the eigenvalue problem  for a twisted complex Monge-Ampère operator in bounded strongly pseudoconvex domains with a homogenous non degenerate strictly positive right hand side which do not contain the case considered here (see \cite{KR89, KR90}). 
 On the other hand a Rayley quotient type formula for the  eigenvalue of the real Monge-Ampère operator was proved  by K. Tso in \cite{Tso90}  with a different approach.  
 
 \smallskip

The paper is organized as follows. In Section 2, we recall some known results which will be useful for the proofs of the main theorems.

 In Section 3, we prove a priori estimates as well as a new existence result for  some degenerate complex Monge-Ampère equations which will be used later.
In Section 4, we prove Theorem \ref{thm1}. As an application, we  show that the eigenvalue $\lambda_1$ plays the role of a bifurcation parameter for  a family of operators of complex Monge-Ampère type with respect to the maximum principle as in the case of  linear second order  elliptic operators. 
In section 5 we develop a variational approach to the  eigenvalue problem  and prove Theorem \ref{thm:variational} using our new existence theorem.

\smallskip

\smallskip

{\bf Aknowledgements :} 
This is a corrected version of the article published in the JGEA in 2023 (\cite{BZ23}). The changes concern a rectification of the proof of \cite[Proposition 3.2]{BZ23} and its corollary \cite[Corollary 3.3]{BZ23}. The original proofs of these results contained a minor gap, which was brought to our attention by Professor Semyon Alesker. We thank him for sharing this pertinent observation.
We have filled this gap with the help of Chinh H. Lu and submitted an erratum to JGEA (see \cite{BLZ26}). We thank Chin H. Lu for his help.

\section{Preliminaries}
Let us first recall some  results in Pluripotential Theory that will be used in the sequel. The general reference for this material is \cite{GZ17}.
\subsection{The complex Monge-Amp\`ere opertaor }
Let $\Omega \Subset \C^n$ be a bounded domain.
 Recall  that by E. Bedford and B.A. Taylor \cite{BT76,BT82}, the complex Monge-Ampère operator is well defined on the class of locally bounded plurisubharmonic functions on $\Omega$. For such a function $v \in PSH(\Omega) \cap L_{loc}^{\infty} (\Omega)$, $(dd^c v)^n$ is a positive current of bidegree $(n,n)$ on $\Omega$, which will be identified to a Radon measure on $\Omega$. By the classical representation theorem of F. Riesz, this Radon measure extends (uniquely) to a positive Borel measure on $\Omega$ (with locally finite mass), called the complex Monge-Ampère measure of $v$ and still denoted by $(dd^c v)^n$. 
Recall that when $v \in PSH (\Omega) \cap C^{2}(\Omega)$  we have 
$$
(dd^c v)^n = \det(v_{j \bar k}) \omega^n.
$$
Moreover the complex Monge-Amp\`ere operator  is continuous under (local) uniform convergence and monotone convergence of sequences in $PSH(\Omega) \cap L_{loc}^{\infty} (\Omega)$  and satisfies the following comparison principle. 

\begin{proposition} \cite{BT76} \label{prop:CP}. Let $u,v \in PSH (\Omega) \cap L^{\infty}(\Omega)$ such that  for $\zeta \in \partial \Omega$ $\liminf_{ z \to \zeta} (u - v) \geq 0$ and $(dd^c u)^n \leq (dd^c v)^n $ in the sense of currents on $\Omega$.
Then $ u \geq v$ in $\Omega$.
\end{proposition}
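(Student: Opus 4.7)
The plan is to argue by contradiction. Suppose there is a point $z_0\in\Omega$ with $u(z_0)<v(z_0)$; the goal is to contradict the mass inequality $(dd^cu)^n\le(dd^cv)^n$ by producing a \emph{strict} reverse inequality on a suitable sub-level set.

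First I would perturb $v$ downward by a quadratic bump in order to obtain simultaneously a compact contact set and a strict mass gap. Setting $R:=\mathrm{diam}(\Omega)$ and
$$
v_\delta(z):=v(z)+\delta\bigl(|z-z_0|^2-R^2\bigr),\qquad \delta>0,
$$
one has $v_\delta\le v$ on $\Omega$, so the boundary hypothesis survives as $\liminf_{z\to\partial\Omega}(u-v_\delta)\ge 0$, while for $\delta$ small enough $v_\delta(z_0)>u(z_0)$. Hence $U_\delta:=\{u<v_\delta\}$ is a nonempty open set that is relatively compact in $\Omega$, and by multilinearity of the Bedford--Taylor operator on bounded plurisubharmonic functions,
$$
(dd^cv_\delta)^n=(dd^cv+\delta\omega)^n\ge(dd^cv)^n+\delta^n\omega^n\ge(dd^cu)^n+\delta^n\omega^n.
$$

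The heart of the argument is the classical Bedford--Taylor integral estimate: whenever $u,\tilde v\in\PSH(\Omega)\cap L^\infty(\Omega)$ and $\{u<\tilde v\}\Subset\Omega$,
$$
\int_{\{u<\tilde v\}}(dd^c\tilde v)^n\le\int_{\{u<\tilde v\}}(dd^cu)^n.
$$
I would obtain this by setting $w:=\max(u,\tilde v)$: locality of $(dd^c\cdot)^n$ on open sets yields $(dd^cw)^n=(dd^c\tilde v)^n$ on $\{u<\tilde v\}$; the Bedford--Taylor integration-by-parts formula applied to the difference $w-u$, which vanishes in a neighborhood of $\partial\Omega'$ for any $\Omega'$ with $\{u<\tilde v\}\Subset\Omega'\Subset\Omega$, gives $\int_{\Omega'}(dd^cw)^n=\int_{\Omega'}(dd^cu)^n$; combining with the pointwise bound $(dd^cw)^n\ge\mathbf{1}_{\{u<\tilde v\}}(dd^c\tilde v)^n+\mathbf{1}_{\mathrm{int}\{u>\tilde v\}}(dd^cu)^n$ delivers the inequality.

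Applying this to $\tilde v=v_\delta$ and inserting the strict pointwise mass bound above yields
$$
\int_{U_\delta}(dd^cu)^n\ge\int_{U_\delta}(dd^cv_\delta)^n\ge\int_{U_\delta}(dd^cu)^n+\delta^n\int_{U_\delta}\omega^n,
$$
so $\int_{U_\delta}\omega^n=0$, contradicting the nonemptiness of the open set $U_\delta$. The main obstacle is the rigorous justification of the integration-by-parts identity, since Stokes' theorem is not directly applicable to Monge--Amp\`ere currents of merely bounded psh functions: one must regularize $u$ and $\tilde v$ by decreasing sequences of smooth psh functions on a slightly shrunken subdomain, perform Stokes in the smooth setting, and pass to the limit using the Bedford--Taylor continuity theorem for monotone sequences, taking care that $w-u$ still vanishes near $\partial\Omega'$ along the approximation.
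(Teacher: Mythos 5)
The paper itself does not prove this statement; it simply cites it as Bedford--Taylor \cite{BT76}. Your proposal is therefore a reconstruction, and it follows the standard Bedford--Taylor route: perturb $v$ by a negative quadratic, invoke the comparison estimate $\int_{\{u<\tilde v\}}(dd^c\tilde v)^n\le\int_{\{u<\tilde v\}}(dd^cu)^n$, and contradict via $(dd^c v+\delta\omega)^n\ge(dd^cv)^n+\delta^n\omega^n$. The skeleton is correct, and your choice $R=\mathrm{diam}(\Omega)$ does work, since for $z_0\in\Omega$ open one has $|z_0-\zeta|<R$ for all $\zeta\in\overline\Omega$, so $\liminf_{z\to\partial\Omega}(u-v_\delta)\ge\delta c>0$ and $U_\delta\Subset\Omega$.

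Two technical points in your sketch of the key integral inequality are a bit loose. First, since $u$ and $\tilde v$ are only upper semicontinuous, $\{u<\tilde v\}$ need not be open, so "locality of $(dd^c\cdot)^n$" does not directly yield $(dd^cw)^n=(dd^c\tilde v)^n$ there, and your later contradiction invokes the "nonemptiness of the \emph{open} set $U_\delta$." One can repair this either by a further downward perturbation making the majorant continuous, or by observing that $\{u<v_\delta\}$ automatically has positive Lebesgue measure (combine upper semicontinuity of $u$ with the sub-mean-value inequality for $v_\delta$ near $z_0$), which is all the final step needs. Second, the pointwise bound $(dd^cw)^n\ge\mathbf{1}_{\{u<\tilde v\}}(dd^c\tilde v)^n+\mathbf{1}_{\mathrm{int}\{u>\tilde v\}}(dd^cu)^n$ omits the contact set $\{u=\tilde v\}$, which may carry mass; what the subtraction actually requires is the sharper Demailly-type inequality $(dd^c\max(u,\tilde v))^n\ge\mathbf{1}_{\{u\ge\tilde v\}}(dd^cu)^n+\mathbf{1}_{\{u<\tilde v\}}(dd^c\tilde v)^n$, which is proved using quasi-continuity rather than bare locality. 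You do flag the need for regularization and quasi-continuity at the end, so these are gaps in rigor, not in conception; the argument is essentially the one of \cite{BT76,BT82} and of \cite[Sec.~3]{GZ17}.
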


 Following  \cite{BT82} we define the Monge-Amp\`ere capacity as follows :  for any Borel set $B \subset \Omega$, set
$$
\text{Cap}_\Omega (B) := \sup \left\{ \int_B (dd^c v)^n \ ; \, v \in PSH(\Omega), -1 \leq v \leq 0 \right\}\cdot
$$

This capacity plays an important role in Pluripotential Theory (see \cite{BT82}).
 Let $D > 0$ be the diameter of $\Omega$ and $R := D\slash 2$ and $B(a,R)$ a ball containing $\Omega$. Then using the test function  $v (z) := (R^{-2}\vert z- a\vert^2 - 1)$ in the definition of the capacity, we easily see that for any Borel set $B \subset \Omega$, 
 \begin{equation} \label{eq:Volcap}
 \text{Vol} (B) \leq R^{2n} \text{Cap}_\Omega (B),
 \end{equation}
 where $ \text{Vol} (B) := \int_B d V =  \int_B (dd^c \vert z\vert^2)^n $ is up to a numerical multiplicative constant the volume of $B$ in $\C^n$. 
\subsection{The CKNS theorem}
Let $\Omega \Subset \C^n$ be a strongly pseudoconvex domain with $C^2$ boundary i.e. $\Omega $ admits a defining function $\rho \in C^2(\bar \Omega)$ which is strictly plurisubharmonic in a neighborhood of $\bar \Omega$ and satisfies $\Vert \nabla \rho \Vert > 0$ in $\partial \Omega$.
We consider  the following Dirichlet problem for the complex Monge-Ampère operator:
\begin{equation}\label{eq:DP}
\left\lbrace
\begin{array}{lcr}
(dd^c u)^n=\psi(\cdot,u) \, \omega^n & \textnormal{in} & \Omega,\\
u=\varphi& \textnormal{ in} & \partial \Omega,
\end{array}
\right.
\end{equation}
where $\omega = dd^c \vert z\vert^2$, $\psi :\bar \Omega \times \R \longrightarrow  \R^+$ and $\varphi : \partial \Omega \longrightarrow \R$ are smooth functions.

We will need the following fundamental theorem due to L. Caffarelli, J.J. Kohn, L. Nirenberg and J. Spruck (see \cite{CKNS85}).

\begin{theorem} \label{thm:CKNS}  Let $\Omega \Subset \C^n$ be a bounded strongly pseudoconvex domain with smooth boundary. Let $\varphi \in C^{\infty} (\partial \Omega)$ and $\psi \in C^{\infty} (\bar \Omega \times \R)$  such that $\partial_t \psi (z,t) \geq 0$ and $\psi  > 0$ in  $\bar \Omega \times \R$. 

Then the Dirichlet problem \eqref{eq:DP} admits a unique solution $u \in PSH (\Omega) \cap C^{\infty} (\bar \Omega)$.
\end{theorem}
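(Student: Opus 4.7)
The plan is the classical continuity method coupled with a priori estimates up to second order, followed by Evans--Krylov regularity theory for concave fully nonlinear elliptic equations. Uniqueness is immediate from the monotonicity of $\psi$ in $t$ together with Proposition~\ref{prop:CP}: if $u_1, u_2$ are two solutions with the same boundary values and $\Omega_{+} := \{u_1 > u_2\}$ were nonempty, then on $\Omega_{+}$ one would have $(dd^c u_1)^n = \psi(z,u_1)\omega^n \geq \psi(z,u_2)\omega^n = (dd^c u_2)^n$, and the comparison principle would force $u_1 \leq u_2$ on $\Omega_{+}$, a contradiction.

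For existence, strong pseudoconvexity of $\Omega$ supplies a smooth strictly plurisubharmonic defining function $\rho$, and for $A \gg 1$ the function $\underline{u} := A\rho + \tilde\varphi$, where $\tilde\varphi \in C^\infty(\bar\Omega)$ is any smooth extension of $\varphi$, is a strict plurisubharmonic subsolution with $\underline{u} = \varphi$ on $\partial\Omega$. I would set up the continuity family
$$
(dd^c u_t)^n = \psi_t(z, u_t)\, \omega^n \ \text{in}\ \Omega, \qquad u_t = \varphi\ \text{on}\ \partial\Omega,
$$
interpolating between a trivially solvable equation at $t = 0$ (e.g.\ $\psi_0 := \det(\underline{u}_{j\bar k})$, so $u_0 = \underline{u}$) and $\psi_1 = \psi$ at $t = 1$, arranging that $\psi_t$ is positive and nondecreasing in its second variable throughout. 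The set $S \subset [0,1]$ of parameters admitting a smooth strictly plurisubharmonic solution contains $0$, is open by the inverse function theorem in H\"older spaces (the linearization of $\log\det(u_{j\bar k}) - \log\psi_t$ at a strictly plurisubharmonic function is a uniformly elliptic operator with nonpositive zeroth order coefficient, hence invertible under Dirichlet conditions), and is closed once one establishes uniform a priori estimates in $C^{2,\alpha}(\bar\Omega)$.

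The heart of the matter is therefore the chain of a priori estimates. The \emph{$C^0$ bound} follows from plurisubharmonicity for the upper bound and from comparison with $\underline{u}$ for the lower. The \emph{$C^1$ bound} is obtained by maximum principle on an auxiliary function such as $|\nabla u|^2 e^{-\kappa u}$ in the interior, and by barrier arguments built from $\rho$ at the boundary. The \emph{$C^2$ bound} is more delicate: applying the maximum principle to a quantity like $\Delta u + \eta(|\nabla u|^2) - Bu$ reduces the global bound on $\Delta u$ to its boundary bound, after which one splits boundary second derivatives into doubly-tangential (immediate from $u = \varphi$ on $\partial\Omega$), mixed tangential-normal (the delicate step, requiring a CKNS-style barrier built from $\rho$ using strong pseudoconvexity), and doubly-normal (recovered algebraically from the equation $\det(u_{j\bar k}) = \psi$ once the other entries and a lower bound on the Hessian are controlled).

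Once uniform $C^2(\bar\Omega)$ estimates hold, $(u_{j\bar k})$ remains in a fixed compact subset of the positive-definite cone and the equation is uniformly elliptic and concave in the Hessian; Evans--Krylov then delivers interior $C^{2,\alpha}$ estimates, Krylov's boundary regularity theorem yields boundary $C^{2,\alpha}$ estimates, and a standard Schauder bootstrap (differentiating the equation repeatedly) produces uniform $C^k(\bar\Omega)$ estimates for every $k$, closing the continuity method. The main obstacle, as in every CKNS-type result, is the mixed tangential-normal second derivative bound on $\partial\Omega$: this is where strong pseudoconvexity enters essentially through the construction of a subtle barrier, and is the technical heart of \cite{CKNS85}.
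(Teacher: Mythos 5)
The paper does not prove this theorem; it quotes it verbatim as a known foundational result from the cited reference \cite{CKNS85} (Caffarelli--Kohn--Nirenberg--Spruck) and uses it as a black box in the subsequent arguments. Your sketch is a faithful and correct high-level account of the continuity-method proof given in that reference: uniqueness from the monotonicity of $\psi$ in $t$ together with the comparison principle (your argument on $\Omega_+=\{u_1>u_2\}$ is valid, since $u_1=u_2$ on $\partial\Omega_+$ and $(dd^cu_2)^n\leq(dd^cu_1)^n$ there, so Proposition~\ref{prop:CP} applies); existence via the continuity family $\psi_t$, with openness from the invertibility of the linearized operator $u^{j\bar k}\partial_j\partial_{\bar k}-\frac{\partial_t\psi_t}{\psi_t}$ (whose zeroth order term is nonpositive by the hypothesis $\partial_t\psi\geq 0$), and closedness from the $C^0$, $C^1$, and $C^2$ a priori estimates up to the boundary; the boundary $C^2$ estimate being the technical heart, split into doubly-tangential, mixed tangential-normal (the CKNS barrier using strong pseudoconvexity), and doubly-normal directions; and finally Evans--Krylov plus Schauder bootstrap to $C^\infty(\bar\Omega)$. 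Since the paper offers no independent proof, there is nothing in its text to contrast your argument against, but your proposal correctly reconstructs the argument of \cite{CKNS85} on which the paper relies.
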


\subsection{The subsolution theorem of B. Guan}
Let us recall that under the condition $\frac{\partial \psi}{\partial u}\geq 0$, Caffarelli-Kohn-Nirenberg-Spruck
used  the continuity method and a priori estimates to prove the existence of a unique solution to the Dirichlet problem \eqref{eq:DP}.

In the general case, the continuity method do not apply. However under the strong assumption that $\psi$ admits a positive lower bound,  B. Guan (\cite{Guan98})  was able to extend the CKNS result to a more general situation  using a priori estimates  and the more involved method of topological degree as in the real case (see \cite{CNS84}). 

Let us state the theorem of B. Guan \cite[Theorem 1.1]{Guan98})  which we will  use only in our application (see Theorem \ref{thm:Uniqueness}). 
\begin{theorem}\cite{Guan98} \label{thm:GuanB}
Let $\Omega \Subset \C^n$ be a bounded domain with smooth boundary, $\varphi \in C^{\infty} (\partial \Omega)$. Assume that $\psi \in C^{\infty }(\bar \Omega \times \R)$,  $\psi  > 0$ in $\bar \Omega \times \R$,   and the problem \eqref{eq:DP} admits a strictly plurisubharmonic subsolution $\underline u \in C^{2}(\bar \Omega)$. 

Then the problem \eqref{eq:DP} admits a smooth solution $u \in  PSH  (\Omega) \cap C^{\infty}(\bar \Omega)$ such that $u \geq \underline u$ in $\Omega$.
\end{theorem}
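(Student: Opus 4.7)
The plan is to establish existence by the method of topological degree rather than by the continuity method, since without the monotonicity hypothesis $\partial_t \psi \geq 0$ of Theorem \ref{thm:CKNS} a smooth deformation argument is not available. Following the strategy of \cite{CNS84} adapted to the complex case, I would seek a zero of the nonlinear operator $F(u) := \det(u_{j \bar k}) - \psi(z, u)$ on the open set
$$
\mathcal{O} := \bigl\{u \in C^{2, \alpha}(\bar \Omega) : u = \varphi \text{ on } \partial \Omega,\ u \geq \underline u \text{ in } \Omega,\ (u_{j \bar k}) > 0 \bigr\},
$$
and deform $F$ through a family $F_t$, $t \in [0,1]$, to a model operator for which a unique solution exists (for instance replacing $\psi(z,t)$ by a function monotone in $t$ that still admits $\underline u$ as a subsolution, so that Theorem \ref{thm:CKNS} supplies the initial solution). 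Existence at $t = 1$ then follows from homotopy invariance of Leray--Schauder degree, provided no solution $u_t$ of $F_t(u_t) = 0$ reaches $\partial \mathcal O$, which in turn reduces to uniform a priori estimates $\|u_t\|_{C^{2, \alpha}(\bar \Omega)} \leq C$ together with $u_t \geq \underline u$.

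The backbone of the proof is therefore the a priori estimate for any strictly plurisubharmonic solution $u$ of the problem satisfying $u \geq \underline u$. The $C^0$ bound $\underline u \leq u \leq \max_{\partial \Omega}\varphi$ follows from the comparison principle (Proposition \ref{prop:CP}) and the maximum principle for plurisubharmonic functions. The boundary gradient estimate is obtained by sandwiching $u$ between the subsolution $\underline u$ and an upper barrier built from the defining function $\rho$, using that $\underline u = u = \varphi$ on $\partial \Omega$. Interior $C^2$ bounds come from the Yau--Aubin Bochner computation applied to $\Delta u$, in which the strict positivity $\inf \psi > 0$ enters decisively to keep the linearized operator uniformly elliptic on the solution.

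The main obstacle is the boundary $C^2$ estimate, since without the monotonicity $\partial_t \psi \geq 0$ the CKNS barrier technique does not apply directly. Guan's key idea is to exploit the \emph{strict} plurisubharmonicity of the subsolution: in complex boundary coordinates with $\partial/\partial z_n$ normal, the pure tangential second derivatives $u_{\alpha \bar \beta}$ ($\alpha, \beta < n$) are determined by $\varphi$ and the boundary geometry; the mixed tangential-normal derivatives $u_{\alpha \bar n}$ are controlled by applying the linearized Monge-Amp\`ere operator to an auxiliary function of the form $A(\underline u - u) + B \rho + |z - z_0|^2$ with $A, B$ taken sufficiently large, the strict plurisubharmonicity of $\underline u$ giving the crucial sign needed to defeat the lack of monotonicity of $\psi$; finally the pure normal derivative $u_{n \bar n}$ is extracted from the equation $\det(u_{j \bar k}) = \psi$ itself, once the other minors are bounded and $\psi > 0$ is invoked. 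Evans--Krylov and Schauder bootstrapping then upgrade the $C^2$ bound to $C^{2,\alpha}$ and ultimately to $C^{\infty}$, after which the degree-theoretic argument concludes existence.
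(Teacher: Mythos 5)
The paper does not prove Theorem~\ref{thm:GuanB}; it is stated as a citation of B.~Guan~\cite{GuanB98}, so there is no internal proof to compare your proposal against. What I can do is assess your sketch against the strategy of the cited source and against what the authors themselves say about it.

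Your high-level outline is faithful to Guan's actual approach: the absence of $\partial_t\psi\geq 0$ rules out the continuity method, so one sets up a Leray--Schauder degree argument as in~\cite{CNS84}, reduces existence to uniform a priori bounds on a homotopy family, and uses the strict plurisubharmonicity of $\underline u$ as the engine behind the boundary second-order estimates, followed by Evans--Krylov and Schauder bootstrapping. That much is right.

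Two points are genuinely problematic as written. First, for the model problem at $t=0$ you invoke Theorem~\ref{thm:CKNS}, but that theorem is stated for \emph{strongly pseudoconvex} $\Omega$, while Theorem~\ref{thm:GuanB} allows an arbitrary bounded domain with smooth boundary. For general $\Omega$ one cannot appeal to CKNS; Guan instead starts the homotopy at an equation that $\underline u$ itself solves (or at a monotone problem whose solvability is itself established via the subsolution), so the degree is computed against a known solution rather than a CKNS solution. Second, and more seriously, the normal--normal boundary estimate $u_{n\bar n}$ is the crux of Guan's proof and your treatment of it is too thin to count as a proof. Writing $\det(u_{j\bar k}) = a\,u_{n\bar n} + b$ with $a := \det(u_{\alpha\bar\beta})_{\alpha,\beta<n}$, it is the \emph{positive lower bound on $a$} that is hard: for a general (not strongly pseudoconvex) domain, $u_{\alpha\bar\beta} = \varphi_{\alpha\bar\beta} - u_{x_n}\rho_{\alpha\bar\beta}$ on $\partial\Omega$ has no a priori sign. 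The authors themselves remark that Guan's normal--normal estimate ``depends heavily on a positive lower bound on $\psi$''; the actual argument is a minimax/contradiction scheme in which the strict subsolution and $\inf\psi > 0$ are used together to force $a$ away from zero, and that step is not ``extracting $u_{n\bar n}$ from the equation once the other minors are bounded.'' As it stands this is the missing idea: you identify where the difficulty lives but do not supply the mechanism that closes it.
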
 
This theorem cannot be applied to prove Theorem \ref{thm1} because in this case, the right hand side  $(- \lambda u)^n$  degenerates at the boundary as $u=0$ in $\partial \Omega$.
We will prove a new theorem which deals with this case (see Theorem \ref{thm:new}).

\subsection{The eigenvalue problem for linear elliptic  operators}

To prove Theorem \ref{thm1}, we need to recall  few results from the  theory of linear elliptic operators of second order. The link to the complex Monge-Ampère operator is provided by 
Gaveau's formula \cite{Gav77} in  linear algebra : if $b$ is a positive Hermitian  $n \times n$ matrix, we have 

\begin{equation}\label{eq215}
	(\det b)^\frac{1}{n}=\frac{1}{n}\inf\{tr(a\cdot b): ~a \in \mathcal{H}_n \},
\end{equation} 
where $\mathcal{H}_n$ be the set of Hermitian positive $ n \times n$ matrices with $\det a \geq 1$.

This formula makes it possible to reduce the equation \eqref{eq1} to a Hamilton-Jacobi-Bellman (HJB) type equation.

Let $\mathcal{A}(\Omega)$ be the set of  positive  Hermitian $n \times n$ matrices $a=\{a_{j\bar k}\}_{1\leq j,k\leq n}$ with continuous coefficients in $\Omega$ and bounded in $\Omega$  such that $\text{det} \,  a \geq 1$.

To each matrix $a=\{a_{j\bar k}\}_{1\leq j,k\leq n}\in \mathcal{A}(\Omega)$, we associate the  second-order linear differential operator
\begin{equation}\label{eq216}
	L_a=\frac{1}{n}\sum_{j,k=1}^na_{j\bar k}\frac{\partial ^2}{\partial z_j\partial \bar z_k}.
\end{equation}
The operator $L_a$ is elliptic and then staisfies the maximum principle.
 Moreover Gaveau's formula implies that if $u\in C^2(\Omega)$, we have pointwise in $\Omega$,

\begin{equation}\label{eq217}
	(\det u_{j\bar k})^\frac{1}{n}=\inf\Big\{L_au  \, ; \, ~ a \in \mathcal{A}(\Omega)\Big\}.
\end{equation}

Following the idea of Lions in the real case \cite{Lions86}, we will show the existence of an eigenvalue of the complex Monge-Amp\`ere operator using the first eigenvalues of the linear operators $L_a$ which define it. 

We will need the following classical result $($see \cite[p. 335-340]{Eva10} $)$:

\begin{lem}\label{lem1}
	There exists a unique pair $(\gamma_1,\phi_1)=(\gamma_1(a), \phi_1(a))$ satisfying $\gamma_1>0, \phi_1\in C^2(\bar \Omega)$, $\phi_1 > 0$  and $\Vert \phi_1\Vert_{C^0(\bar \Omega)}=1$ such that $(\gamma_1,\phi_1)$ is a  solution of the Dirichlet problem
	\begin{equation}\label{eq218}
	\left\lbrace
	\begin{array}{lcr}
	L_a\phi_1=-\gamma_1\phi_1f& \textnormal{in}& \Omega,\\
	\phi_1=0 & \textnormal{on}& \partial\Omega\\
	\phi_1<0& \textnormal{in}& \Omega.
	\end{array}
	\right.
	\end{equation}
\end{lem}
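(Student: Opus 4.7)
The plan is to apply the Krein-Rutman theorem to a compact positive operator associated with the Dirichlet problem for $L_a$. Since $L_a$ is uniformly elliptic with no zero-order term, classical $L^p$-Calder\'on-Zygmund and Schauder theory gives, for each $h \in C^0(\bar \Omega)$, a unique solution $v =: Kh$ of
\begin{equation*}
L_a v = - h f \ \text{in}\ \Omega, \qquad v = 0 \ \text{on}\ \partial\Omega,
\end{equation*}
with $Kh \in W^{2,p}(\Omega) \cap C^{1,\alpha}(\bar \Omega)$ for every $p<\infty$ and $\alpha\in (0,1)$. The resulting linear map $K : C^0(\bar \Omega) \to C^0(\bar \Omega)$ is then compact by the Arzel\`a--Ascoli theorem.

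Next I would verify that $K$ is strongly positive in the Krein-Rutman sense. If $h \geq 0$ and $h \not\equiv 0$, then $L_a (Kh) = -h f \leq 0$ in $\Omega$ with $Kh = 0$ on $\partial \Omega$, so the weak and strong maximum principles for $L_a$ yield $Kh > 0$ in $\Omega$, and the Hopf boundary lemma gives $\partial_{\nu} (Kh) < 0$ on $\partial\Omega$. Consequently $Kh$ lies in the interior of the positive cone of the Banach space $X := \{ w \in C^1(\bar \Omega) : w|_{\partial\Omega}=0\}$, and the Krein-Rutman theorem, applied in $X$, provides a simple positive eigenvalue $\mu_1 > 0$, equal to the spectral radius of $K$, together with an eigenfunction $h_1 > 0$ in $\Omega$ unique up to a positive scalar.

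Setting $\gamma_1 := 1/\mu_1 > 0$ and $\phi_1 := - h_1 / \Vert h_1 \Vert_{C^0(\bar\Omega)}$, the identity $K h_1 = \mu_1 h_1$ translates into $L_a \phi_1 = - \gamma_1 \phi_1 f$ with $\phi_1 < 0$ in $\Omega$, $\phi_1 = 0$ on $\partial\Omega$, and $\Vert \phi_1\Vert_{C^0(\bar\Omega)}=1$. The $C^2$-regularity of $\phi_1$ up to the boundary then follows from Schauder estimates (using the regularity of the coefficients of $a$ and of $\partial\Omega$), and the uniqueness of the pair $(\gamma_1, \phi_1)$ is exactly the simplicity statement in Krein-Rutman together with the normalization.

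The only technical subtlety is that the natural positive cone of $C^0(\bar \Omega)$ has empty interior once the Dirichlet condition is imposed, so Krein-Rutman must be invoked in the refined space $X$, whose positive cone has nonempty interior thanks to the Hopf lemma; checking that $K$ remains compact on $X$ is immediate from the $C^{1,\alpha}$-estimates. Beyond this standard workaround, everything is a routine application of classical linear elliptic theory, carried out in detail in \cite[pp.~335--340]{Eva10}.
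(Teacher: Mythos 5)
Your argument is correct and reconstructs precisely the proof the paper refers to in \cite{Eva10}: invert the Dirichlet problem to produce a compact operator $K$ and apply the Krein--Rutman theorem, taking care to work in $C^1_0(\bar\Omega)$ rather than $C^0_0(\bar\Omega)$ so that the positive cone has nonempty interior (via the strong maximum principle and Hopf's lemma). One caveat worth flagging, which affects the lemma as stated more than your proof: the class $\mathcal{A}(\Omega)$ only requires the coefficients $a_{j\bar k}$ to be continuous and bounded, so the $C^2(\bar\Omega)$ regularity you attribute to Schauder estimates is not automatic unless the coefficients are in fact H\"older continuous up to $\partial\Omega$; with merely continuous coefficients the $L^p$/Calder\'on--Zygmund theory gives $\phi_1 \in W^{2,p}(\Omega)\cap C^{1,\alpha}(\bar\Omega)$ for all $p<\infty$ and $\alpha<1$, but not $C^2(\bar\Omega)$.
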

The number $\gamma_1 (a) = \gamma_1(L_a,\Omega)$ is called the first eigenvalue of the operator $-L_a$ and $\phi_1$ is an associated eigenfunction.

We will use the following results to prove uniqueness in Theorem \ref{thm1}.

\begin{prop}\cite{BNV94}\label{pro3} Let $ a \in \mathcal A(\Omega)$ and $\gamma_1 := \gamma_1 (a)$. Then 

1. $\gamma_1$ is given by the following formula :
\begin{equation}\label{eq219}
	\gamma_1(a)=\gamma_1(L_a,\Omega)=\sup\{\gamma \geq 0:~~\exists   \phi < 0, L_a\phi +\gamma \phi f \geq 0 \},
\end{equation}
where $\phi  \in C^{2} (\Omega)$ and $\phi = 0$ in $\partial \Omega$. 

2.  If $\phi \in C^2 (\Omega)$ is bounded from above in $\Omega$, $\limsup_{z\to \zeta}\phi(z)\leq 0$ for all $\zeta \in \partial \Omega$ and satisfies $L_a\phi+\gamma_1 \phi f\geq 0$ in $\Omega$,  then there exists a constant $\theta \in \R$ such that $\phi=\theta \phi_1.$ 
\end{prop}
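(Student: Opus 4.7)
The plan is to follow the classical arguments of Berestycki--Nirenberg--Varadhan \cite{BNV94}, combining the strong maximum principle and Hopf boundary lemma for the second-order linear elliptic operator $L_a$ with a duality and sliding comparison against the first eigenfunction $\phi_1$ provided by Lemma \ref{lem1}.

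For Part 1, the easy inequality $\gamma_1 \leq \sup\{\ldots\}$ follows at once by taking $\phi = \phi_1$, which is admissible (it is negative in $\Omega$, vanishes on $\partial \Omega$, and satisfies $L_a\phi_1 + \gamma_1 \phi_1 f = 0 \geq 0$). For the reverse inequality, I would use duality. Applying Lemma \ref{lem1} to the formal adjoint $L_a^*$ (which has the same principal part as $L_a$) one obtains a principal eigenpair $(\gamma_1, \psi_1)$ with the \emph{same} eigenvalue, where $\psi_1 \in C^2(\bar \Omega)$, $\psi_1 < 0$ in $\Omega$, $\psi_1 = 0$ on $\partial\Omega$, and $L_a^* \psi_1 = -\gamma_1 f \psi_1$. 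For any admissible $\phi$ with parameter $\gamma$, I would multiply the inequality $L_a\phi + \gamma \phi f \geq 0$ by the negative function $\psi_1$, integrate over $\Omega$, and apply integration by parts twice. The Dirichlet conditions $\phi = \psi_1 = 0$ on $\partial \Omega$ make all boundary terms vanish, producing
\[
(\gamma - \gamma_1) \int_\Omega \phi\, \psi_1\, f\, dV \leq 0.
\]
Since $\phi, \psi_1 < 0$ and $f > 0$, the integral is strictly positive, and hence $\gamma \leq \gamma_1$.

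For Part 2, I would use a sliding argument. Set
\[
\theta^* := \sup\{\theta \in \R : \theta \phi_1 \geq \phi \text{ in } \Omega\} = \inf_{z \in \Omega} \frac{\phi(z)}{\phi_1(z)},
\]
the direction of the infimum being reversed by division by the negative quantity $\phi_1$. The Hopf boundary estimate for $\phi_1$ (so that $|\phi_1(z)|$ is comparable to $\mathrm{dist}(z, \partial \Omega)$ near $\partial \Omega$) combined with the assumptions that $\phi$ is bounded above and $\limsup_{z \to \partial \Omega} \phi \leq 0$ ensures that $\theta^* \in \R$ is finite. Setting $w := \theta^* \phi_1 - \phi \geq 0$ and using $L_a \phi_1 + \gamma_1 f \phi_1 = 0$ together with the hypothesis,
\[
L_a w + \gamma_1 f w = -(L_a \phi + \gamma_1 f \phi) \leq 0,
\]
so $L_a w \leq -\gamma_1 f w \leq 0$. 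Thus $w$ is a nonnegative $L_a$-superharmonic function in $\Omega$. If $w$ attains an interior zero, the strong maximum principle forces $w \equiv 0$, i.e.\ $\phi = \theta^* \phi_1$, which is the desired conclusion with $\theta = \theta^*$. Otherwise $w > 0$ throughout $\Omega$; the infimum defining $\theta^*$ is then approached only at the boundary, and the Hopf lemma applied to $w$ at such a boundary point, combined with the fact that $\phi/\phi_1 \to \theta^*$ forces $\theta^* \partial_\nu \phi_1 - \partial_\nu \phi$ to vanish there, yields a contradiction with the strict positivity of $\partial_\nu w$ provided by Hopf.

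The main obstacle lies in making the boundary analysis in Part 2 rigorous: the test function $\phi$ is only assumed $C^2(\Omega)$ with a one-sided boundary limsup, whereas $\phi_1 \in C^2(\bar \Omega)$ has non-vanishing normal derivative by Hopf. Pushing the sliding contradiction through requires upgrading the minimal boundary regularity of $\phi$ (for instance via interior Hopf-type estimates on $w$ and the precise asymptotics of $\phi_1$ near $\partial\Omega$), so that the quotient $\phi/\phi_1$ has a limit at the point where the infimum is approached. This boundary analysis is the technical heart of the BNV proof of this statement.
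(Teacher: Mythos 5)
The paper does not prove Proposition~\ref{pro3} itself: it is stated as a citation of \cite{BNV94}, with a remark that the extension from $f\equiv 1$ to a general positive density $f$ is in \cite{NP92}. So you are reconstructing an argument that the paper deliberately omits. With that caveat, your sketch has two genuine gaps.

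For Part~1, the reverse inequality $\gamma \leq \gamma_1$ via the formal adjoint $L_a^*$ does not go through in the generality required. By the definition in this paper, $a \in \mathcal A(\Omega)$ only has \emph{continuous and bounded} coefficients, and $L_a$ is in non-divergence form; the formal adjoint $L_a^*\psi = \frac{1}{n}\sum_{j,k}\partial_{z_j}\partial_{\bar z_k}(a_{j\bar k}\psi)$ requires two derivatives of $a_{j\bar k}$ and is simply not defined here. Handling merely continuous (indeed, merely bounded measurable) coefficients without invoking an adjoint is precisely what makes the Berestycki--Nirenberg--Varadhan paper non-trivial: they characterize $\gamma_1$ directly through maximum-principle arguments (approximating $\Omega$ from inside, constructing sub- and super-solutions, and using Krein--Rutman on nice approximating domains) rather than through duality. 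Even if one pretended $a$ were smooth, your integration by parts requires that $\phi$ vanish on $\partial\Omega$ in a strong enough sense for boundary terms to drop, whereas the hypothesis only gives $\phi \in C^2(\Omega)$ with $\phi=0$ on $\partial\Omega$ in an unspecified trace sense; this needs an additional barrier or approximation argument, not a one-line integration by parts.

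For Part~2, the sliding comparison against $\phi_1$ is the right idea and is close in spirit to the BNV argument, but the gap you yourself flag is the real one. With only $\phi\in C^2(\Omega)$, $\phi$ bounded above, and $\limsup_{z\to\partial\Omega}\phi \leq 0$, the quotient $\phi/\phi_1$ need not extend continuously to $\bar\Omega$, the sliding constant $\theta^*$ may in principle be realized only ``in the limit,'' and the Hopf lemma cannot be applied to $w = \theta^*\phi_1 - \phi$ at a boundary point without first establishing that $w$ is $C^1$ up to that point and attains a zero there. Resolving this is exactly the content of the refined maximum principle of \cite{BNV94} (their Theorem~1.1 and its proof via exhaustion by subdomains and the comparison function $\phi_1$); it is not a routine tightening but the technical core of their paper. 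Your instinct that this is ``the technical heart'' is correct, but as written the step is asserted rather than proved, and the proof of Proposition~\ref{pro3} cannot be considered complete without it.
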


All these results are stated in \cite{BNV94} in the case $f \equiv 1$ in $\Omega$, but they are still valid for a general positive density $f > 0$ (see \cite{NP92}).

To apply this result in our context, we will need the following observation.

\begin{lem}\label{lem2}
	Let  $u,v \in PSH(\Omega) \cap C^{1,\bar 1}(\bar \Omega) \cap C^2(\Omega).$ Then there exist a matrix $a\in \mathcal{A}(\Omega)$ such that 
 $$
   [\det(u_{j\bar k}) ]^\frac{1}{n}- [\det(v_{j\bar k}) ]^\frac{1}{n} = L_a(u-v).
 $$
\end{lem}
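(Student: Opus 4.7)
The key observation is that Gaveau's identity (\ref{eq215}) exhibits $F(b):=(\det b)^{1/n}$ as the infimum of a family of linear functionals on positive Hermitian matrices, hence $F$ is concave on the cone $\mathcal H_n^{>0}$ of strictly positive Hermitian matrices. A direct computation (or the fact that $F$ is $1$-homogeneous) shows that the infimum in (\ref{eq215}) is attained at a unique matrix
$$
a(b) \;=\; (\det b)^{1/n}\, b^{-1},
$$
which satisfies $\det a(b) = 1$, and that $a(b) = n\, \nabla F(b)$, where $\nabla F$ is the gradient of $F$ regarded as a function of the entries $b_{j\bar k}$. This is the building block.

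\textbf{Construction.} Given $u,v$ as in the statement, set at each $z\in\Omega$
$$
b_t(z) \;:=\; t\, \bigl(u_{j\bar k}(z)\bigr) + (1-t)\bigl(v_{j\bar k}(z)\bigr), \qquad t\in[0,1],
$$
and define the Hermitian matrix
$$
a(z) \;:=\; \int_0^1 (\det b_t(z))^{1/n}\, b_t(z)^{-1}\, dt.
$$
Applying the fundamental theorem of calculus to the $C^1$ function $t\mapsto F(b_t(z))$, using linearity of $t\mapsto b_t$ in the variables $u_{j\bar k}-v_{j\bar k}$, and pulling the integral out of the linear pairing, gives the pointwise identity
$$
(\det u_{j\bar k})^{1/n}-(\det v_{j\bar k})^{1/n}
= \frac{1}{n}\sum_{j,k} a_{j\bar k}(z)\,\bigl(u_{j\bar k}(z)-v_{j\bar k}(z)\bigr)
= L_a(u-v)(z),
$$
which is exactly the equality asserted.

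\textbf{Verification that $a\in\mathcal A(\Omega)$.} At each $z$ the integrand is positive Hermitian, hence so is $a(z)$. Since $u,v\in C^2(\Omega)$, the map $(t,z)\mapsto b_t(z)$ is continuous, and so is its inverse wherever $b_t$ is nondegenerate; combined with the $C^{1,\bar 1}$ control up to the boundary this yields continuity and local boundedness of $a$ in $\Omega$. For the essential condition $\det a\geq 1$, I would invoke Minkowski's determinant inequality (concavity of $\det^{1/n}$ on positive Hermitian matrices): since each integrand $(\det b_t)^{1/n}b_t^{-1}$ has determinant $1$,
$$
(\det a(z))^{1/n} \;\geq\; \int_0^1 \bigl(\det\bigl[(\det b_t(z))^{1/n}b_t(z)^{-1}\bigr]\bigr)^{1/n}\, dt \;=\; 1.
$$

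\textbf{Main obstacle.} The factor $b_t(z)^{-1}$ is only well defined when $b_t(z)$ is strictly positive definite. In the use of the lemma (uniqueness in Theorem \ref{thm1}), both $u$ and $v$ satisfy a Monge--Amp\`ere equation with strictly positive right-hand side away from their zero set, so this is automatic there. To handle the general statement, the natural device is to regularize: replace $(u,v)$ by $(u+\varepsilon|z|^2,\, v+\varepsilon|z|^2)$, which leaves $u-v$ unchanged and makes $b_t+\varepsilon\,\mathrm{Id}$ strictly positive definite uniformly, construct $a_\varepsilon\in\mathcal A(\Omega)$ as above, and pass to a limit using local uniform bounds together with the continuity of $\det^{1/n}$. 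The delicate point is to ensure that $a_\varepsilon$ admits a (sub)limit still in $\mathcal A(\Omega)$; on compact subsets where the unperturbed Hessians are definite this is immediate, and elsewhere the identity is preserved in the limit because both sides are continuous in $\varepsilon$.
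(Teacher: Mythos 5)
Your construction is identical to the paper's: for positive Hermitian $b$ one has $b^{-1}=(\det b)^{-1}\tilde b^{\tau}$, so your $(\det b_t)^{1/n}b_t^{-1}$ is exactly the paper's integrand $[\det x(t)]^{1/n-1}\tilde x(t)^{\tau}$, and the integral representation via the fundamental theorem of calculus, the use of Gaveau's formula \eqref{eq215}, and the verification $\det a\geq 1$ via Minkowski's inequality all coincide.

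The degeneracy concern you raise is, however, not something a regularization can fix, and it exposes a genuine gap that the paper's proof shares: the lemma is false as stated. Take $n=2$, $u(z)=|z_1|^2$ and $v\equiv 0$; the desired identity reads $0=\tfrac12 a_{1\bar 1}$, but every positive Hermitian $a$ with $\det a\geq 1$ has $a_{1\bar 1}>0$, so no $a\in\mathcal A(\Omega)$ can satisfy it. Your $\varepsilon$-regularization does not rescue this: with $u_\varepsilon=|z_1|^2+\varepsilon|z|^2$, $v_\varepsilon=\varepsilon|z|^2$ one gets $b_t=\mathrm{diag}(t+\varepsilon,\varepsilon)$ and
\[
a_\varepsilon=\int_0^1(\det b_t)^{1/2}b_t^{-1}\,dt
=\mathrm{diag}\Bigl(2\sqrt\varepsilon\bigl(\sqrt{1+\varepsilon}-\sqrt\varepsilon\bigr),\ \tfrac{2}{3\sqrt\varepsilon}\bigl((1+\varepsilon)^{3/2}-\varepsilon^{3/2}\bigr)\Bigr),
\]
whose $(1,\bar 1)$ entry tends to $0$ while the $(2,\bar 2)$ entry blows up as $\varepsilon\to 0$, so no sublimit lies in $\mathcal A(\Omega)$ even though $\det a_\varepsilon\geq 1$ persists. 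The statement is correct under the additional (implicit) hypothesis that the pencil $x(t)=t\,u_{j\bar k}+(1-t)\,v_{j\bar k}$ is positive definite for $t\in(0,1)$ at each point of $\Omega$, with the resulting integral locally bounded; this holds in every place the paper actually invokes the lemma, because there the functions solve Monge--Amp\`ere equations with strictly positive right-hand side on the interior and are therefore strictly plurisubharmonic in $\Omega$.
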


\begin{proof}	
 Recall that the function $x \mapsto \det x$ is a $C^1$-function on the space $\mathcal{M}_n$ of square matrices of order $n$  
 and its differential is given for $x\in \mathcal{M}_n$ and $\xi \in T_x\mathcal{M}_n\simeq \mathcal{M }_n$, by the formula 
	
 \begin{equation}\label{eq220}
		D_\xi (\det) (x) = D(\det)(x)\cdot \xi=tr(\tilde{ x}^\tau \xi),
	\end{equation}
where $\tilde x  $ be the comatrix of $x$ and $\tilde{ x}^\tau$ its transpose.

Set $g(t):=[\det(tu_{j\bar k}+(1-t) v_{j\bar k})]^\frac{1}{n}$ for $0 \leq t \leq 1$. 
 Then $g$ is differentiable in $[0,1]$ and therefore 	
	$$
	[\det(u_{j\bar k}) ]^\frac{1}{n}- [\det(v_{j\bar k}) ]^\frac{1}{n} = g(1)-g(0)=\int_{0}^{1}g'(t)dt.
	$$	
	Applying the formula \eqref{eq220} of the differential of $\det $ at the point 	
$x(t):=t(u_{j\bar k})+ (1-t)(v_{j\bar k})$ with $\xi:=(u_{j\bar k})-(v_{j\bar k}),$ we obtain	
 $$
 g'(t):=\frac{1}{n}[\det (x(t))]^{\frac{1}{n}-1}tr(\tilde x(t)^\tau \xi).
 $$
	
 It follows by setting $w:=u-v$ that	
 $$
 [\det(u_{j\bar k}) ]^\frac{1}{n}- [\det(v_{j\bar k}) ]^\frac{1}{n} =\frac{1}{n}\sum_{j,k=1}^n a_{j \bar k} w_{j \bar k}=L_a w,
 $$
 where
 $$
 a_{j \bar k}:=\int_{0}^1[\det (x(t))]^{\frac{1}{n}-1}\tilde{ x}_{j\bar k}(t) dt.
 $$
 It remains to show that $a \in \mathcal{A}(\Omega)$.
	
	Indeed, $\det^\frac{1}{n}$ is a concave function on the convex set $\mathcal H_n$ of  positive Hermitian $n \times n$ matrices $($this result follows from Gaveau's formula$)$. Therefore
 \begin{align*}
	 (\det a)^\frac{1}{n} &\geq \int_{0}^1\Big[\det \Big( [\det x(t)]^{\frac{1}{n}-1}\tilde{ x}(t)\Big)\Big]^\frac{1}{n}dt\\&=
 \int_{0}^1[\det x(t)]^{\frac{1}{n}-1}[\det \tilde{ x}(t) ]^\frac{1}{n}dt.
\end{align*}
	
	It remains to calculate $\det \tilde x (t)$. This is done by remembering the formula for the inverse of a matrix $x\in \mathcal{H}_n$: $x^{-1}= \frac{1}{\det x}\tilde x ^\tau$ and therefore $\det \tilde x =(\det x)^{n-1}.$ Thus $\det a \geq 1$ and $a \in \mathcal{A}(\Omega).$	
\end{proof}

%\section{A priori estimates}

\section{A new existence theorem}

To prove the main results stated in the introduction, we need to prove a new theorem on the existence of a solution to some special degenerate complex Monge-Ampère equations.

Let $\Omega \Subset \C^n$ be a bounded domain with smooth boundary and $\psi: \bar \Omega\times \R \to \R^+$ be a  smooth function on $\bar \Omega \times \R$, $\psi \geq 0$.

Consider the Dirichlet problem for the complex Monge-Ampère operator:
\begin{equation}\label{eq:MA0}
\left\lbrace
\begin{array}{lcr}
(dd^c u)^n=\psi(\cdot,u)\omega^n & \textnormal{in} & \Omega,\\
u=0& \textnormal{ in} & \partial \Omega,
\end{array}
\right.
\end{equation}
where $u \in \mathcal P(\Omega):= PSH(\Omega) \cap C^2(\Omega) \cap C^0(\bar \Omega)$ is the unknown function.

%\subsection{A  priori estimates}

 %We will prove a new existence theorem on the solvability of the Dirichlet problem for  the complex Monge-Ampère equation \eqref{eq:MA}.
 
  %Let $\psi: \bar \Omega\times \R \to \R^+$ be a non negative smooth function in $\bar \Omega \times \R$. We consider the Dirichlet problem for the following complex Monge-Ampère equation

%\begin{equation}\label{eq1}
%\left\lbrace
%\begin{array}{lcr}
%(dd^c u)^n=\psi(z,u)\omega^n & \textnormal{in} & \Omega,\\
%u=0 & \textnormal{ in} & \partial \Omega.
%\end{array}
%\right.
%\end{equation}

As we already said, the  solvability of this  problem when $\Omega \Subset \C^n$ is a bounded strongly pseudoconvex domains in $\C^n$, was established by L. Caffarelli, J.J. Kohn, L. Nirenberg and J. Spruck \cite{CKNS85}  under the assumption  that $\frac{\partial \psi}{\partial u}\geq 0$ and some conditions on the way $\psi$ degenerates near the boundary.  Later this result was extended by B. Guan \cite{Guan98} and B. Guan and Q. Li \cite{GL10} to a more general situation  in the non degenerate case on a general domain, assuming  the existence of a strictly plurisubharmonic subsolution. 

However these results do not apply in our case.
Using ideas from \cite{CKNS85}, we are able to prove a new existence theorem  for such equations under the assumption of the existence of  a subsolution and a supersolution by a fixed point method.

\smallskip
 
We say that $\underline u \in \mathcal P(\Omega)$ is a subsolution of \eqref{eq:MA0} if it satisfies 

\begin{equation}\label{eq3}
\left\lbrace
\begin{array}{lcr}
(dd^c \underline u)^n \geq \psi(z,\underline u) \omega^n & \textnormal{on} & \Omega,\\
\underline u=0& \textnormal{ on} & \partial \Omega.
\end{array}
\right.
\end{equation}\mbox{}
Moreover the subsolution  $\underline u$ is said to be a {\it strict subsolution} if 
\begin{equation} \label{eq:StrictSubsolution}
(dd^c \underline u)^n \geq (\psi(z,\underline u) + \epsilon_0)   \omega^n, \, \, \textnormal{ on} \, \,  \, \Omega.
\end{equation}

Also we say that $\bar u \in \mathcal P(\Omega)$ is a supersolution of \eqref{eq:MA0}, if it satisfies
\begin{equation}\label{eq4}
\left\lbrace
\begin{array}{lcr}
(dd^c \bar u)^n\leq\psi(z,\bar u)\omega^n & \textnormal{on} & \Omega,\\
\bar u=0& \textnormal{ on} & \partial \Omega.
\end{array} \right.
\end{equation}
The differential inequality here is understood in the sense of currents on $\Omega$.

\smallskip

Here is our  new result.

\begin{theorem}\label{thm:new}
 Let $\Omega \Subset \C^n$ be a bounded  strongly pseudoconvex domain with smooth boundary,  $0 \leq \psi^\frac{1}{n}  \in C^{\infty} (\bar{\Omega} \times \R)$  and  $\frac{\partial \psi}{\partial u}\leq 0$ in $\Omega \times ]-\infty, 0]$. 
  
  Assume  the following conditions:  
  
  $(1)$ If $\psi > 0$ in $\bar{\Omega} \times ]-\infty,0]$ , the problem \eqref{eq:MA0} admits a subsolution $\underline u \in PSH(\Omega)\cap C^{\infty}(\bar \Omega)$;
  
  \smallskip
  
  $(2)$ If $\psi \geq 0$ in $\bar{\Omega} \times  ]-\infty,0]$ and $\psi > 0$ in $\Omega \times ]-\infty,0[$, the Dirichlet  problem \eqref{eq:MA0} admits a strict subsolution $\underline u \in PSH(\Omega)\cap C^{\infty}(\bar \Omega)$;
  
  \smallskip
  
   $(3)$   the Dirichlet problem \eqref{eq:MA0} admits a supersolution $\bar u \in PSH(\Omega)\cap C^0(\bar \Omega)$ such that $\underline u \leq \bar u < 0$ on $\Omega$.
   
   \smallskip
   
 Then the Dirichlet problem \eqref{eq:MA0} admits a solution $u \in PSH(\Omega) \cap C^{\infty}(\Omega)  \cap C^{1,\bar 1}( \bar \Omega)$ such that $\underline u \leq u \leq \bar u$  on $\bar \Omega.$   
  \end{theorem}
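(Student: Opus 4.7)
The plan is to prove existence by a combined approximation and monotone-iteration scheme, with the passage to the limit controlled by uniform a priori estimates on the gradient and the Laplacian. As a first reduction I would unify the two hypotheses: in case (1), $\psi > 0$ on $\bar\Omega \times (-\infty,0]$, so the given subsolution $\underline u$ can be made strict by a harmless perturbation of $\psi$. Hence in both cases we may assume at the outset that $\underline u$ is a \emph{strict} subsolution in the sense of \eqref{eq:StrictSubsolution}. I would then regularize by setting $\psi_\varepsilon(z,t) := \psi(z,t) + \varepsilon$ for $0 < \varepsilon \ll 1$; this is smooth, strictly positive on $\bar\Omega \times (-\infty,0]$, still non-increasing in $t$, and for $\varepsilon$ sufficiently small $\underline u$ remains a subsolution while $\bar u$ remains a supersolution of the $\psi_\varepsilon$-problem.

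For each such $\varepsilon > 0$, I would construct $u^\varepsilon$ by monotone iteration. Setting $u_0^\varepsilon := \underline u$ and, given $u_k^\varepsilon \in PSH(\Omega) \cap C^\infty(\bar\Omega)$ with $\underline u \leq u_k^\varepsilon \leq \bar u$, define $u_{k+1}^\varepsilon$ as the unique solution produced by Theorem \ref{thm:CKNS} of
$$
(dd^c u_{k+1}^\varepsilon)^n = \psi_\varepsilon\bigl(z, u_k^\varepsilon(z)\bigr)\, \omega^n \text{ in } \Omega, \qquad u_{k+1}^\varepsilon = 0 \text{ on } \partial\Omega.
$$
The right-hand side is a smooth, strictly positive function of $z$ alone, so CKNS applies with trivial $t$-monotonicity. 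Because $\psi_\varepsilon$ is non-increasing in $t$, the comparison principle (Proposition \ref{prop:CP}) shows that the iteration map is monotone non-decreasing in its argument, that $u_1^\varepsilon \geq \underline u$ (strict subsolution), and that $u_k^\varepsilon \leq \bar u$ for all $k$ (supersolution). Thus $(u_k^\varepsilon)_k$ is monotone and sandwiched between $\underline u$ and $\bar u$, hence converges pointwise to a limit $u^\varepsilon \in PSH(\Omega) \cap L^\infty(\Omega)$.

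The heart of the proof is then to establish a priori estimates uniform in $k$ and $\varepsilon$: a gradient bound $\Vert \nabla u_k^\varepsilon \Vert_{C^0(\bar\Omega)} \leq C$ and a Laplacian bound $\Vert \Delta u_k^\varepsilon \Vert_{L^\infty(\Omega)} \leq C$. The gradient bound follows the standard CKNS two-step pattern: a boundary gradient estimate using $\underline u$ and $\bar u$ as barriers, then an interior maximum principle applied to an auxiliary function of the form $\vert \nabla u\vert^2 e^{-A u}$. Granted these estimates, Arzel\`a-Ascoli together with the Bedford-Taylor continuity of $(dd^c\cdot)^n$ under uniform convergence of bounded psh functions yield $u^\varepsilon \in C^{1,\bar 1}(\bar\Omega)$ satisfying $(dd^c u^\varepsilon)^n = \psi_\varepsilon(z, u^\varepsilon)\omega^n$, with $C^{1,\bar 1}$ bounds uniform in $\varepsilon$. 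Interior $C^\infty$ regularity then follows from Evans-Krylov / Schauder theory on compact subsets of $\Omega$, where the equation is non-degenerate since $u^\varepsilon < 0$ and $\psi_\varepsilon > 0$ there. Letting $\varepsilon \to 0^+$ and extracting a convergent subsequence gives a solution $u$ of \eqref{eq:MA0} with the required regularity and $\underline u \leq u \leq \bar u$.

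The main obstacle is the Laplacian estimate. The classical CKNS--Guan second-derivative bounds require the right-hand side to be strictly positive up to $\partial\Omega$, but here $\psi(z, u^\varepsilon(z))$ vanishes on $\partial\Omega$ in case (2) (since $u^\varepsilon = 0$ there and $\psi(\cdot,0)$ may vanish on $\partial\Omega$), and in any event degenerates in the limit $\varepsilon \to 0$. I would expect to exploit the strict subsolution $\underline u$ crucially in a boundary barrier construction, combined with an interior maximum principle on a test quantity of the form $e^{-\kappa \underline u}\,\Delta u$, using the concavity of $(\det\cdot)^{1/n}$ on positive Hermitian matrices (Gaveau's formula \eqref{eq215}) and the inequality $u \geq \underline u$ to control the inverse Monge-Amp\`ere operator $(\det u_{j\bar k})^{-1/n}$ up to the boundary. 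This is the "new Laplacian a priori estimate for the inverse Monge-Amp\`ere operator" announced in the introduction, and is where genuinely new technical work is required.
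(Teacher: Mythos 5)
Your overall architecture---monotone iteration, $\varepsilon$-regularization, uniform a priori estimates, then Evans--Krylov and Schauder for interior smoothness---matches the paper's, but there is a genuine gap in the part you correctly identify as "the heart of the proof." Each iterate $u_{k+1}^\varepsilon$ solves a Monge--Amp\`ere equation whose right-hand side $\psi_\varepsilon(z,u_k^\varepsilon(z))$ depends on the \emph{previous} iterate, so the standard maximum-principle gradient and Laplacian estimates give bounds for $u_{k+1}^\varepsilon$ that depend on $\Vert\nabla u_k^\varepsilon\Vert$ and $\Vert\Delta u_k^\varepsilon\Vert$. Merely invoking "the standard CKNS two-step pattern" does not show these bounds are uniform in $k$: without control of the growth you could have $K_{k+1}\leq C(1+K_k)$, which is useless. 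The paper's Propositions \ref{prop:GradientEstimate} and \ref{prop:LaplaceEstimate} are engineered precisely so that the dependence on the previous iterate enters with coefficient $\tfrac12$, i.e. $K_{k+1}\leq C_1+\tfrac12 K_k$ and $L_{k+1}\leq C_2+\tfrac12 L_k$, which is what makes the iteration bounds uniform. Identifying and proving this contractive structure is the new technical content, and your proposal does not supply it; the auxiliary functions you suggest ($|\nabla u|^2 e^{-Au}$, $e^{-\kappa\underline u}\Delta u$) are not visibly tuned to produce such a contraction, and you would need to track exactly how $\nabla_z\log\psi_\varepsilon(z,v(z))$ and $\Delta_z\log\psi_\varepsilon(z,v(z))$ feed back into the estimate.

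A second, smaller issue is your unification of the two hypotheses. In case $(1)$ the given $\underline u$ is only a subsolution, not a strict one, so after replacing $\psi$ by $\psi+\varepsilon$ the inequality $(dd^c\underline u)^n\geq(\psi(\cdot,\underline u)+\varepsilon)\omega^n$ need not hold; the phrase "harmless perturbation of $\psi$" hides either a loss (decreasing $\psi$ changes the equation you end up solving, requiring yet another limit) or an unjustified strictness gain. The cleaner route, which is also the paper's, is to run the iteration directly with $\psi$ in case $(1)$ (no regularization is needed since $\psi>0$), and only regularize in case $(2)$, where the strict-subsolution constant $\epsilon_0$ can absorb the added $\varepsilon^n$; the paper moreover takes $\underline u_\varepsilon:=\underline u+\varepsilon\rho$ with $dd^c\rho\geq\omega$ so that the extra plurisubharmonicity, rather than strictness alone, furnishes the subsolution inequality for the perturbed problem. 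Finally, the boundary normal--normal second-derivative estimate is handled in the paper not by a clever global test function but by the CKNS Hopf-lemma device (Proposition \ref{prop:BoundaryEstimate}): $u\leq\bar u<0$ with $\bar u=0$ on $\partial\Omega$ gives $-u_{x_n}\geq\delta_0>0$ at the boundary, and strong pseudoconvexity then forces a uniform positive lower bound on the tangential complex Hessian, which bounds $u_{n\bar n}$ from the equation; your sketch alludes to a "boundary barrier construction" but does not isolate this mechanism.
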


\smallskip
   
 The proof of Theorem \ref{thm:new} will be given below. The idea of the proof is as follows.  When $\psi > 0$ in $\bar \Omega \times ]-\infty,0]$, by Theorem \ref{thm:CKNS}, for any $v \in   PSH(\Omega)\cap C^{\infty}(\bar \Omega)$, the following Dirichlet problem 
 \begin{equation}\label{eq:MA}
\left\lbrace
\begin{array}{lcr}
(dd^c u)^n=\psi(\cdot,v)\omega^n & \textnormal{in} & \Omega,\\
u=0& \textnormal{ in} & \partial \Omega,
\end{array}
\right.
\end{equation}
 admits a unique solution $u = T(v) \in   PSH(\Omega)\cap C^{\infty}(\bar \Omega)$.
 
 The solution of the Dirichlet problem  \eqref{eq:MA0} is a fixed point for the inverse Monge-Amp\`ere operator $T$. We will use an iterative method to find a fixed point of $T$.   We then need to establish  a priori estimates to control  the iteration process. This  will be done  in the next three subsections.

 \subsection*{A priori estimates}
 In this subsection, we assume the following conditions. 
\begin{enumerate}
 \item $\Omega$ is a smooth bounded strongly pseudoconvex domain in $\mathbb{C}^n$, $\rho$ is a smooth defining function of $\Omega$ such that $dd^c \rho \geq \omega =dd^c |z|^2$; 
	
\item  $\psi $  is smooth  on $\bar \Omega \times ]-\infty, 0]$ and $\psi > 0$ in $\Omega \times ]-\infty, 0[$;
\item  $\phi \in C^{0}(\bar \Omega)$  is a given function satisfying   $ \phi < 0$ in $\Omega$;
\item $v \in PSH(\Omega)  \cap C^1(\bar \Omega)$ is a given function such that $v \leq \phi$ in $\Omega$.
\end{enumerate}
We consider the following Dirichlet problem 
 \begin{equation}\label{eq:MA}
\left\lbrace
\begin{array}{lcr}
(dd^c u)^n=\psi(\cdot,v) \, \omega^n & \textnormal{in} & \Omega,\\
u=0& \textnormal{on} & \partial \Omega.
\end{array}
\right.
\end{equation} 

\subsection{The gradient a priori estimate} %The gradient a priori  estimate as stated in \cite[Proposition 3.2]{BZ23} is the following. 

\begin{proposition} \label{prop:GradientEstimate} Let $u \in PSH(\Omega) \cap C^{3}(\Omega) \cap C^1(\bar \Omega)$ be a solution to the Dirichlet problem \eqref{eq:MA}. 

Then we have the following estimate 
 \begin{equation} \label{eq:GradientEstimate1}
 \sup_{\bar \Omega} |\nabla u|^2 \leq \sup_{\partial \Omega} |\nabla u|^2 +\frac{\sup_{\bar \Omega} |\nabla v|^2}{2} +C_1,
 \end{equation}
 where $C_1$ depends on  $L:= \Vert \nabla \rho\Vert_{C^0(\bar \Omega)}$, $C_0 := \sup_{\bar \Omega} (| u |+|\rho|)$, $M_0 := \sup_{\bar \Omega} \vert v\vert$,  $K_1 :=\Vert \psi^{1 \slash n}\Vert_{C^1(\bar \Omega \times [-M_0,0])}$ and $m_0 := \min_{\bar{\Omega}_0 \times [-M_0 , -A_0]} \psi^{1\slash n}> 0$, where $A_0 > 0$ and $\Omega_0 \Subset \Omega$ depend only on $C_0$ and the $C^{0}(\bar \Omega_0)$-norm of $\phi$.
\end{proposition}
This result is stated in  \cite[Proposition 3.2]{BZ23}. However its proof contains a gap. We will give a correct proof here following closely the argument of \cite[Proposition 3.2]{BZ23}. The only modification concerns the function $G$, defined by \eqref{eq:testfunction} below, where we replace the function $B\rho$ in the original proof by the function $e^{B \rho}$ (see \cite{BLZ26}).

%%%
\begin{proof}
%We closely follow the argument of \cite[Proposition 3.2]{BZ23}. The only modification concerns the function $G$, defined by \eqref{eq:testfunction} below, where we replace the function $B\rho$ in the original proof with $e^{B \rho}$. 
We set $\beta= |\nabla u|^2= \sum_p |u_p|^2$, where $u_p$ stands for the partial derivative of $u$ in the direction $z_p$. 

We consider the function 
\begin{equation} \label{eq:testfunction}
G := \log \beta + \frac{u^2}{2}+  e^{B\rho},
\end{equation}
defined and upper semicontinuous in $\bar \Omega$. Here $B$ is a positive constant to be specified later. 
The maximum of $G$ over $\bar \Omega$ is attained at some $z_0 \in \bar \Omega$.

 If $\beta(z_0)\leq 1$, then  $G(z_0)\leq C_0^2 \slash 2 + 1$, hence for any $z \in \Omega$, $G(z) \leq C_0^2  + 1$, which yields the upper bound  $\beta(z) \leq  e^{G(z)} \leq e^{C_0^2 + 1}$ in $\Omega$. 

We can thus assume that $\beta(z_0)\geq 1$. 
If $z_0 \in \partial \Omega$, then for any $z \in \Omega$,
$$
\beta (z) \leq e^{G(z)} \leq e^{G(z_0)} \leq \beta(z_0)  e^{C^2_0  + 1}. 
$$
Hence  $\beta (z) \leq C_1 \max_{\partial \Omega} \beta$ and the estimate \eqref{eq:GradientEstimate1} is satisfied  with any fixed constant  $C_1 \geq e^{C^2_0 + 1}$. 

We can thus assume that $z_0 \in \Omega$ and  $\beta(z_0)\geq 1$. From now, our computations will be done at $z_0 \in \Omega$ and we can assume that the matrix $(u_{i\bar j})$ is diagonal. By the maximum principle we have, for any $1\leq p, q \leq n$, 
\begin{flalign}
	0 &= G_p=\frac{\beta_p}{\beta} + uu_p + B e^{B\rho}\rho_{p}, \nonumber \\
	0 &=G_{\bar q}=\frac{\beta_{\bar q}}{\beta} + uu_{\bar q} + Be^{B\rho}\rho_{\bar q},
\label{eq: gradient first derivative}
\end{flalign}
and we also have

\begin{flalign}
	0 & \geq \sum_{p=1}^n u^{p\bar{p}} G_{p\bar{p}}  
	= \sum_{p=1}^n u^{p\bar{p}} \left ( \frac{\beta_{p\bar{p}}}{\beta} - \frac{|\beta_p|^2}{\beta^2} +|u_p|^2 +u u_{p\bar p}  + B e^{B\rho} \rho_{p \bar p} + B^2 e^{B \rho} \vert \rho_p\vert^2 \right) \nonumber  \\
	&   = \sum_{p=1}^n u^{p\bar{p}} \left ( \frac{\beta_{p\bar{p}}}{\beta} - |u u_p+ Be^{B \rho} \rho_p|^2  + B^2 e^{B\rho}  \vert \rho_p\vert^2 + B e^{B\rho} \rho_{p \bar p} + |u_p|^2  \right) + n u. \label{eq: gradient second derivative 1}
\end{flalign}
 We next compute 
\begin{flalign}
\sum_{p=1}^n u^{p\bar{p}} \beta_{p\bar{p}} &=\sum_{p=1}^n u^{p\bar{p}} \sum_{j=1}^n (u_j u_{\bar j})_{p\bar p} =  \sum_{p=1,j=1}^n u^{p\bar{p}} \left(2 Re(u_{\bar j}u_{jp\bar{p}}) + |u_{jp}|^2 + |u_{j\bar{p}}|^2\right) \nonumber \\
& = \sum_{1\leq p,j\leq n} u^{p\bar{p}} \left(2 Re(u_{\bar j}u_{jp\bar{p}}) + |u_{jp}|^2\right)+ \sum u_{p \bar p}. \label{eq: gradient second derivative 2}
\end{flalign}
Setting $g (z) :=  \log \psi (z,v(z)), z \in  \Omega$, and differentiating the Monge-Amp\`ere equation 
\[
\log \det(u_{i\bar{j}}) =  \log \psi (\cdot,v) = g,
\]
we obtain 
\[
\sum_{p=1}^n u^{p\bar p} u_{p\bar p j}  =  g_j.
\]
Thus, Cauchy-Schwarz inequality gives 
\begin{flalign}  \label{eq: gradient mixed derivative 1}
\sum_{p=1}^n u^{p\bar{p}} \beta_{p\bar{p}} & = \sum u_{p \bar p} + \sum_{1\leq p,j\leq n} u^{p\bar{p}} |u_{jp}|^2+ 2Re\left (\sum_{1\leq j\leq n} g_j u_{\bar j}\right)  \nonumber \\ 
& \geq \sum_{1\leq p,j\leq n} u^{p\bar{p}} |u_{jp}|^2 -  2 |\nabla g| \sqrt{\beta}.
\end{flalign}
We next estimate the first term on the right-hand side of \eqref{eq: gradient mixed derivative 1}. 

By  \eqref{eq: gradient first derivative}, and the fact that 
$(u_{p \bar q})$ is diagonal, we have

\[
\sum_{1\leq j\leq n} u_{jp}u_{\bar j} + u_p u_{p\bar p} = -\beta (u u_p + B e^{B \rho}\rho_p),
\]
thus Cauchy-Schwarz inequality gives
\[
|\beta (uu_p +B e^{B \rho} \rho_p) + u_p u_{p\bar p} |^2  \leq \beta \sum_{j} |u_{jp}|^2.
\]
Therefore, 
\begin{flalign}
	\sum_{j,p} \frac{u^{p\bar p}|u_{jp}|^2}{\beta} & \geq \beta^{-2} \sum_{p} u^{p\bar p} |\beta (uu_p + 
B e^{B \rho} \rho_p) + u_p u_{p\bar p} |^2\nonumber \\
	& \geq  \sum_{1\leq p\leq n} u^{p\bar p} |uu_p +  B e^{B \rho}\rho_p|^2 - 2 \beta^{-1} \sum_{1\leq p\leq n} |u u_p +  Be^{B\rho}\rho_p| \vert u_p\vert \nonumber \\
	&\geq   \sum_{1\leq p\leq n} u^{p\bar p} |u u_p +  B e^{B \rho} \rho_p|^2 - 2 C_0 - 2 B L \beta^{-1\slash 2}. \label{eq: gradient mixed derivative 2}
\end{flalign}
From \eqref{eq: gradient mixed derivative 1} and \eqref{eq: gradient mixed derivative 2} and $\beta (z_0) \geq 1$, we obtain 
\begin{flalign}
	\beta^{-1}\sum_{1\leq p \leq n} u^{p\bar{p}} \beta_{p\bar{p}} &\geq  \sum_{1\leq p\leq n} u^{p\bar p} |u u_p + B e^{B \rho} \rho_p|^2 - 2 \beta^{-1/2} |\nabla g|   -A_1,
\end{flalign} 
where $A_1 :=  2 C_0 +  2BL$. 

Observe that $g (z) = \log \psi (z,v(z))  = n \log \psi^{1\slash n} (z,v(z)))$, hence 
$$
\nabla g =  n\left[\nabla \psi^{1\slash n} (\cdot,v) + (\psi^{1\slash n})_t (\cdot,v) \nabla v\right] \psi^{-1 \slash n} (\cdot,v),
$$
and then 
$$
\vert \nabla g \vert\leq  n K_1 \left(1 + \vert \nabla v \vert\right)  \psi^{-1 \slash n} (\cdot,v).
$$

From  \eqref{eq: gradient second derivative 1} we thus get 
 \begin{flalign} \label{eq: gradient last step}
 0 & \geq  B e^{B\rho} \sum_{p=1}^n u^{p\bar p} - 2n K_1 \beta^{-1/2} (1+|\nabla v|) \psi^{-1\slash n} (\cdot,v) - A_2 +  \sum_{1\leq p\leq n}  
 u^{p\bar p}|u_p|^2,  
 \end{flalign}
 where $A_2 := A_1 + n C_0 $.

\smallskip

We choose $B > 0$ so that $B e^{-1} = 1 + 4 K_1 e^{C_0^2 + 2}$ and set 
\[
B_1:=  e^{-C_0^2  -2}\sup_{\bar \Omega} (1 + |\nabla v|^2). 
\]
If $\beta(z_0)\leq \max(B_1,1)$ then for any $z \in \Omega$,
\[
G(z)\leq G(z_0)\leq \max(\log B_1,0)+ \frac{C_0^2}{2} +1  \leq \max(\log (1 + \sup_{\bar \Omega} |\nabla v|^2) -1, C_0^2+1),
\]
 hence for any $z \in \Omega$,
 \[
 \log \beta(z)\leq G(z) - \frac{u(z)^2}{2} - e^{B \rho(z)} \leq \max(\log (1 +  \sup_{\bar \Omega} |\nabla v|^2) -1, C_0^2+1),
 \] 
 and the  estimate \eqref{eq:GradientEstimate1} follows with any fixed constant $C_1 \geq e^{C_0^2 + 1}$.  
 
 \bigskip
 
We thus assume in the sequel that $\beta(z_0)\geq \max(B_1,1)$.

\smallskip
\smallskip

Recalling $\Pi_{1 \leq p \leq n} u^{p \bar p} = \psi^{-1}(\cdot,v)$ and applying the arithmetic-geometric inequality, we then have
 \begin{eqnarray} \label{eq:Ineq1}
 2n K_1 \beta^{-1/2} (1+|\nabla v|)\psi^{-1\slash n}(\cdot,v) &\leq &  4n K_1 e^{C_0^2 + 1} \psi^{-1\slash n}(\cdot,v) \nonumber \\
 & \leq & 4 K_1 e^{C_0^2 + 1} \sum_{p=1}^n u^{p\bar p}. 
 \end{eqnarray}
 Then  from \eqref{eq: gradient last step} and \eqref{eq:Ineq1}, it follows that
 $$
 0  \geq  (B e^{B\rho} -  4 K_1 e^{C_0^2 + 1}) \sum_{p=1}^n u^{p\bar p}   - A_2 +  \sum_{1\leq p\leq n}  
 u^{p\bar p}|u_p|^2.
 $$
 
 We consider  two cases.   
 
 \smallskip
  \smallskip
 
 \underline{{\bf Case 1: $\rho(z_0)\geq -B^{-1}$}}. 
 \smallskip
  \smallskip
  
 We then  have 
$$
B e^{B\rho(z_0)} \geq B e^{-1} = 1 + 4 K_1 e^{C_0^2 + 2}.
$$
 
Then  from \eqref{eq: gradient last step} and \eqref{eq:Ineq1}, it follows that
 \begin{equation}  \label{eq: gradient last step2}
	0  \geq   \sum_{p=1}^n u^{p\bar p}  - A_2 + \sum_{1\leq p\leq n} u^{p\bar p}|u_p|^2. 
\end{equation}
%Then $0\geq \sum_{p=1}^n u^{p\bar p} - A_2 + 2 \sum_{p=1}^n u^{p\bar p} |u_p|^2$.
   Hence $\sum_{1\leq p\leq n}u^{p\bar p}\leq A_2$.
   
   Now recall the following elementary inequality for $0< \lambda_1 \leq  \cdots \leq \lambda_n$,
\begin{equation} \label{eq:ElementaryIneq}
\sum_{1 \leq j \leq n} \lambda_j \leq n \left(\Pi_{1 \leq j \leq n} \lambda_j\right)  \left(\sum_{1 \leq j \leq n} \lambda_j^{-1}\right)^{n-1}.
\end{equation} 
Applying this inequality with $\lambda_j = u_{j \bar j}$, we deduce that $\sum_{p=1}^n u_{p\bar p} \leq A_3:= n A_2^{n-1} K_1$. Since $0\leq u_{p\bar p}$, it follows that $u_{p\bar p}\leq A_3$, hence $u^{p\bar p}\geq A_3^{-1}$. Putting this into \eqref{eq: gradient last step2} we obtain $\beta (z_0) \leq A_4 := A_2 A_3$, where $A_4 >0$ is a uniform contant independent of $v$. Hence for any $z\in  \Omega$, 
$$
\beta(z) \leq e^{G(z)} \leq e^{G(z_0)} \leq \beta(z_0) e^{C_0^2 + 1 \leq A_4 e^{C_0^2 + 1}},
$$ 
which yields the  inequality \eqref{eq:GradientEstimate1} 
with any fixed constant $C_1 \geq A_4 e^{C_0^2 + 1}$.

\smallskip

\underline{{\bf Case 2: $\rho(z_0) < -B^{-1}$}}. 
\smallskip

Observe that  $\Omega_0 := \{ z \in \Omega ; \rho(z) <  -B^{-1}\} \Subset \Omega$,  $z_0 \in  \Omega_0$. By continuity of $\phi$ and the fact that $\phi < 0$ on $\Omega$, it follows 
that $ \phi (z_0) \leq - A_0 := \max_{\bar{\Omega}_0} \phi < 0$. Hence $v (z_0) \leq \phi (z_0)  \leq -A_0 < 0$.

Since  $z_0 \in  \Omega_0$, $- M_0 \leq v(z_0) \leq -A_0$ and  $\psi > 0$ in $\Omega \times ]-M_0,0[$, it follows that 
$$
\psi^{1\slash n} (z_0,v(z_0)) \geq m_0 := \min_{\bar{\Omega}_0 \times [-M_0,-A_0]} \psi^{1\slash n}  (z,t) > 0.
$$ 

Since $\beta(z_0)\geq e^{-C_0^2  - 2} \sup_{\bar \Omega}(1 +|\nabla v|^2)$,  we have 
$$
\beta^{-1 \slash 2} (z_0) \vert \nabla v(z_0)\vert \psi^{-1 \slash n} (z_0,v(z_0)) \leq m_0^{-1}  e^{C_0^2 \slash 2 - 1} =: A_5,
$$ 
and then from \eqref{eq: gradient last step}, it follows that  
\begin{eqnarray*}
0 \geq  B e^{-BC_0}  \sum_{p=1}^n u^{p\bar p} -A_6 + \sum_{p=1}^n u^{p\bar p} |u_p|^2 \geq  \sum_{p=1}^n u^{p\bar p} -A_6 + \sum_{p=1}^n u^{p\bar p} |u_p|^2,
\end{eqnarray*}
 where $A_6 := A_2 + 2 n K_1 A_5$.  

Since $B  \geq 1$, this implies that $ \sum_{p=1}^n u^{p\bar p} \leq A_6 e^{B C_0}$ and yields the desired estimate in the same way as before with any fixed  constant $C_1 \geq A_2 A_6 e^{B C_0} e^{C_0^2 + 1}$. 

Finally the gradient estimate \eqref{eq:GradientEstimate1} holds with the constant 
$$
C_1 :=  A_2 A_6 e^{B C_0} e^{C_0^2 + 1}.
$$
\end{proof}
 
% \begin{rem} It is easy to see from the previous proof that there exists a neighborhood $W$ of $\partial \Omega$ in $\bar \Omega$ such that 
% \begin{equation} \label{eq:GradientEstimate1'}
% \sup_{\overline W} |\nabla u|^2 \leq \sup_{\partial \Omega} |\nabla u|^2 +\frac{\sup_{\bar \Omega} |\nabla v|^2}{2} + C_1,
% \end{equation}
 %where $C_1 >0 $ depends only on  $L:= \Vert \nabla \rho\Vert_{C^0(\bar \Omega)}$, $C_0 := \sup_{\bar \Omega} (| u |+|\rho|)$, $M_0 := \sup_{\bar \Omega} \vert v\vert$,  $K_1 :=\Vert \psi^{1 \slash n}\Vert_{C^1(\bar \Omega \times [-M_0,0])}$ but not on a lower bound of $\psi$.
 
% Indeed it is enough to set $W := \{z \in \Omega ; \rho(z) > - B^{-1}\}$ where $ B := e + 4 e K_1 e^{C_0^1 + 2}$ and repeat the same reasoning by maximizing  the function $G$ on $\overline W$.
% Then the point $z_0 \in \bar W$ maximizing $G$ is either in $\partial \Omega$ or satisfies
% $0 > \rho (z_0) \geq B^{-1}$.
% \end{rem}
 \smallskip

\smallskip

We can improve the previous a priori estimate for a fixed point of the operator $T$, giving a correct proof of \cite[Corollary 3.3]{BZ23}.
 
\begin{proposition} \label{coro:GradientEstimate2} Let $u \in PSH(\Omega) \cap C^{3}(\Omega) \cap C^1(\bar \Omega)$ be a  solution to the complex Monge-Amp\`ere equation 
$$
(dd^c u)^n = \psi (\cdot,u) \omega^n
$$ 
such that $u \leq 0$ in $\bar \Omega$.

Then we have the following estimate 
	\[
	\sup_{\bar \Omega} |\nabla u|^2 \leq  C_1 \left(\sup_{\partial \Omega} |\nabla u|^2  + 1\right),
	\]
 where $C_1$ depends on  $L := \Vert \rho\Vert_{C^1(\bar \Omega)}$, $C_0 := \sup_{\bar \Omega} (| u |+|\rho|)$  and  $K_1:=\Vert \psi^{1 \slash n}\Vert_{C^1(\bar \Omega \times [-C_0,0])}$.
 \end{proposition}
 It is important to emphasize  that the constant $C_1$ does not depend on a lower bound of $\psi$ in contrast with the previous estimate.
 
 \smallskip
 
 \begin{proof} We use the same notation  $\beta := \vert \nabla u\vert^2$ and proceed in the same way as in the previous proof except that we consider instead the following function 
\[
G:= \log \beta + \frac{u^2}{2}+ B \rho
\]
defined and upper semicontinuous on $\bar \Omega$, where $B$ is a positive constant to be specified later. 
The maximum of $G$ over $\bar \Omega$ is attained at some $z_0 \in \bar \Omega$.

As before, we can assume that $z_0 \in \Omega$, $\beta(z_0)\geq 1$, and the matrix $(u_{i\bar j} (z_0))$ is diagonal. By the maximum principle we have, for any $1\leq p, q \leq n$, 
\begin{flalign} 
	0 &= G_p=\frac{\beta_p}{\beta} + uu_p + B\rho_{p}
%	0 &=G_{\bar q}=\frac{\beta_{\bar q}}{\beta} + uu_{\bar q} + B\rho_{\bar q},
\label{eq: gradient first derivative1.2}
\end{flalign}
and 
\begin{flalign}
	0 & \geq \sum_{p=1}^n u^{p\bar{p}} G_{p\bar{p}}  = \sum_{p=1}^n u^{p\bar{p}} \left ( \frac{\beta_{p\bar{p}}}{\beta} - \frac{|\beta_p|^2}{\beta^2}  + B +|u_p|^2 +uu_{p\bar p} \right) \nonumber  \\
	&   = \sum_{p=1}^n u^{p\bar{p}} \left ( \frac{\beta_{p\bar{p}}}{\beta} - |u u_p+ B\rho_p|^2  + B +  |u_p|^2  \right) + n u. \label{eq: gradient second derivative 1}
\end{flalign}
% We next compute 
%\begin{flalign}
%\sum_{p=1}^n u^{p\bar{p}} \beta_{p\bar{p}} &=\sum_{p=1}^n u^{p\bar{p}} \sum_{j=1}^n (u_j u_{\bar j})_{p\bar p} =  \sum_{p=1,j=1}^n u^{p\bar{p}} \left(2 Re(u_{\bar j}u_{jp\bar{p}}) + |u_{jp}|^2 + |u_{j\bar{p}}|^2\right) \nonumber \\
%& = \sum_{1\leq p,j\leq n} u^{p\bar{p}} \left(2 Re(u_{\bar j}u_{jp\bar{p}}) + |u_{jp}|^2\right)+ \sum u_{p \bar p}. \label{eq: gradient second derivative 2}
%\end{flalign}
%Set  $f (z) :=  \log \psi (z,u(z))$ for $z \in  \Omega$.
%
%Differentiating the Monge-Amp\`ere equation 
%\[
%\log \det(u_{i\bar{j}}) =  \log \psi (\cdot,u) = f,
%\]
%we obtain 
%\[
%\sum_{p=1}^n u^{p\bar p} u_{p\bar p j}  =  f_j,
%\]
With $f=\log \psi(\cdot, u)$, the computations in the proof of Proposition  
\ref{prop:GradientEstimate} give 
\begin{flalign}  \label{eq: gradient mixed derivative 1}
\sum_{p=1}^n u^{p\bar{p}} \beta_{p\bar{p}} & = \sum u_{p \bar p} + \sum_{1\leq p,j\leq n} u^{p\bar{p}} |u_{jp}|^2+ 2Re\left (\sum_{1\leq j\leq n} f_j u_{\bar j}\right)  \nonumber \\ 
& \geq \sum_{1\leq p,j\leq n} u^{p\bar{p}} |u_{jp}|^2 -  2 |\nabla f| \sqrt{\beta}.
\end{flalign}

By \eqref{eq: gradient first derivative1.2} we have 
\[
\sum_{1\leq j\leq n} u_{jp}u_{\bar j} + u_p u_{p\bar p} = -\beta (u u_p + B\rho_p),
\]
thus Cauchy-Schwarz inequality gives
\[
|\beta (uu_p +B\rho_p) + u_p u_{p\bar p} |^2  \leq \beta \sum_{j} |u_{jp}|^2,
\]
therefore, 
\begin{flalign}
	\sum_{j,p} \frac{u^{p\bar p}|u_{jp}|^2}{\beta} & \geq \beta^{-2} \sum_{p} u^{p\bar p} |\beta (uu_p + B\rho_p) + u_p u_{p\bar p} |^2\nonumber \\
	& \geq  \sum_{1\leq p\leq n} u^{p\bar p} |uu_p +  B\rho_p|^2 - 2 \beta^{-1} \sum_{1\leq p\leq n} |u u_p +  B\rho_p| \vert u_p\vert \nonumber \\
	&\geq   \sum_{1\leq p\leq n} u^{p\bar p} |u u_p +  B\rho_p|^2 - 2 C_0 - 2 B L \beta^{-1\slash 2}, \label{eq: gradient mixed derivative 2}
\end{flalign}
where $L=\sup_{\bar \Omega}|\nabla \rho|$. 
From \eqref{eq: gradient mixed derivative 1} and \eqref{eq: gradient mixed derivative 2} and $\beta (z_0) \geq 1$, we obtain 
\begin{flalign}
	\beta^{-1}\sum_{1\leq p \leq n} u^{p\bar{p}} \beta_{p\bar{p}} &\geq  \sum_{1\leq p\leq n} u^{p\bar p} |u u_p +  B\rho_p|^2 - 2 \beta^{-1/2} |\nabla f|   -A_1,
\end{flalign} 
where $A_1 :=  2 C_0 +  2BL$. 

Arguing as in the proof of Proposition \ref{prop:GradientEstimate} with $u$ in the place of $v$, we obtain 
%Recall that $f (z) = \log \psi (z,u(z))  = n \log \psi^{1\slash n} (z,u(z)))$, hence 
%$$
%\nabla f =  n\left[\nabla \psi^{1\slash n} (\cdot,u) + (\psi^{1\slash n})_t (\cdot,u) \nabla u\right] \psi^{-1 \slash n} (\cdot,u),
%$$
$\vert \nabla f \vert\leq  n K_1 \left(1 + \vert \nabla u \vert\right)  \psi^{-1 \slash n} (\cdot,u),$
where $K_1$ is an upper bound of the $C^{1}$-norm of $\psi^{1\slash n}$ on $\bar \Omega \times [- C_0,0]$.

Observe also that, by the arithmetic-geometric mean inequality, we have 
\[
\sum_{1\leq p\leq n} u^{p\bar p} \geq   n \psi (\cdot,u)^{-1\slash n}.  
\]
Plugging these estimates into \eqref{eq: gradient second derivative 1} we obtain 
 \begin{flalign} \label{eq: gradient last step}
 0 & \geq  \left(B  -  2 K_1 \beta^{-1/2} (1+|\nabla u|)\right)\sum_{p=1}^n u^{p\bar p}  - A_2 +  \sum_{1\leq p\leq n}  
 u^{p\bar p}|u_p|^2,  
 \end{flalign}
 where $A_2 := A_1 + n C_0 $.
 
 Since $\beta(z_0) \geq 1$, it follows that $\beta(z_0)^{-1/2} (1+|\nabla u(z_0)|)) \leq 2$. Therefore the inequality  \eqref{eq: gradient last step} gives 

\begin{flalign} \label{eq: gradient last last step}
 0  \geq  (B  -  4 K_1) \sum_{p=1}^n u^{p\bar p}  - A_2 +  \sum_{1\leq p\leq n}  
 u^{p\bar p}|u_p|^2. 
 \end{flalign}

If we choose $B=1+ 4 K_1$,  the inequality \eqref{eq: gradient last last step} becomes 
  \begin{flalign} 
 0 & \geq   \sum_{p=1}^n u^{p\bar p}  - A_2 +  \sum_{1\leq p\leq n}  
 u^{p\bar p}|u_p|^2.  
 \end{flalign}
 
  Hence $\sum_{1\leq p\leq n}u^{p\bar p}\leq A_2$, and we can proceed as in Proposition \ref{prop:GradientEstimate} to complete the proof. 
   
%   Now recall the following elementary inequality for $0< \lambda_1 \leq  \cdots \leq \lambda_n$,
%\begin{equation} \label{eq:ElementaryIneq}
%\sum_{1 \leq j \leq n} \lambda_j \leq n \left(\Pi_{1 \leq j \leq n} \lambda_j\right)  \left(\sum_{1 \leq j \leq n} \lambda_j^{-1}\right)^{n-1}.
%\end{equation} 
%Applying this inequality with $\lambda_j = u_{j \bar j}$, we deduce that $\sum_{p=1}^n u_{p\bar p} \leq A_3:= n A_2^{n-1} K_1$. Since $0\leq u_{p\bar p}$, it follows that $u_{p\bar p}\leq A_3$, hence $u^{p\bar p}\geq A_3^{-1}$. Putting this into \eqref{eq: gradient last step2} we obtain $\beta\leq A_4 := A_2 A_3$ as desired. 
\end{proof}

\subsection{Laplacian estimate}

Fix a function $v\in PSH(\Omega)\cap C^{2}(\bar \Omega)$. 
%In what follows if $f$ is a real upper bounded function in $\Omega$, we denote by $f^*$ its unique upper semi-continuous extension to $\bar \Omega$. 

\begin{proposition}\label{prop:LaplaceEstimate} Let $u \in PSH(\Omega) \cap C^{4}(\Omega) \cap 
C^{3} (\bar \Omega)$ be a smooth solution to the complex Monge-Amp\`ere equation \eqref{eq:MA}. 
	
	Then the following estimate holds
	\[
	\sup_{\Omega} \Delta u \leq C_2 \left(1 +  \sup_{\partial \Omega} \Delta u + \frac{1 + \Vert \nabla v\Vert^2 +  \sup_{\Omega} \Delta v}{2}\right), 
	\]
	where $C_2$ depends on $r:= \sup_{\Omega}|z|$, $C_0:=\sup_{\bar \Omega}|u|$, $ M_0 :=\sup_{\bar \Omega} |v|$, $C_1=\sup_{\bar \Omega}(|\nabla u|)$ and  
	$K_2:= \Vert \psi^{1\slash n}\Vert_{C^{1,1}(\bar  \Omega \times [-M_0,0]}$.  
\end{proposition}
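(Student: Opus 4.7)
I would prove the Laplacian estimate by an Aubin--Yau maximum principle argument applied to an auxiliary function of the form
\[
H(z) := \log\bigl(n + \Delta u(z)\bigr) + B\,\rho(z) - A\,u(z),
\]
for positive constants $A,B$ to be fixed later in terms of $r, C_0, M_0, C_1$ and $K_2$. The function $H$ is continuous on $\bar\Omega$, so its maximum is attained either on $\partial\Omega$ -- which gives the desired control up to the boundary term $\sup_{\partial\Omega}\Delta u$ -- or at an interior point $z_0 \in \Omega$, where the second-order condition $\sum_p u^{p\bar p}H_{p\bar p}(z_0) \leq 0$ applies.

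At such an interior maximum, after passing to normal holomorphic coordinates diagonalizing $(u_{i\bar j})$, I would combine the first-order condition $H_p(z_0) = 0$ -- which yields $(\Delta u)_p = (n+\Delta u)(A u_p - B\rho_p)$ and hence controls the gradient correction $\sum u^{p\bar p}|(\Delta u)_p|^2/(n+\Delta u)^2$ by $C_1$ and $L := \|\nabla\rho\|_{C^0}$ -- with the Aubin--Yau inequality valid in $\C^n$ (where the bisectional curvature vanishes):
\[
\Delta_u(\Delta u) \;=\; \sum_{p,q}u^{q\bar q}u_{p\bar p q\bar q} \;\geq\; \Delta g, \qquad g := \log\det(u_{i\bar j}) = \log\psi(\cdot,v),
\]
obtained by differentiating $\log\det u_{i\bar j} = g$ twice and dropping the non-negative term $\sum_{i,k,p}|u_{i\bar k p}|^2/(u_{i\bar i}u_{k\bar k})$. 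The barrier $B\rho$ contributes $B\,\Delta_u\rho \geq B\sum u^{p\bar p}$ (since $dd^c\rho \geq \omega$), which together with the AM--GM inequality $\sum u^{p\bar p} \geq n\,\psi^{-1/n}(\cdot, v)$ allows one to choose $A,B$ so as to absorb all the gradient corrections at $z_0$.

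The crucial computation is a lower bound for $\Delta g$. Writing $g = n\log\psi^{1/n}(\cdot,v)$ and applying the chain rule produces four contributions: a pure spatial term $\Delta_z\log\psi$, bounded by $K_2$; a mixed term $2\operatorname{Re}\bigl((\log\psi)_{z^p t}\,v_{\bar p}\bigr)$, controlled via Young's inequality $2|ab|\le a^2+b^2$ in terms of $K_2$ and $|\nabla v|^2$; the second-order $t$-derivative term $(\log\psi)_{tt}|\nabla v|^2$; and the term $(\log\psi)_t\,\Delta v$. It is precisely the application of Young's inequality to the mixed term, together with a careful rebalancing of the four contributions, that produces the coefficient $\tfrac{1}{2}$ in front of $1 + \|\nabla v\|^2 + \sup_\Omega\Delta v$ in the statement, the remaining absolute constants being absorbed into $C_2 = C_2(r, C_0, M_0, C_1, K_2)$.

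The principal obstacle -- and the reason this estimate is not a direct consequence of \cite{CKNS85} -- is that the bound must depend only on $K_2 = \|\psi^{1/n}\|_{C^{1,1}}$ and not on any positive lower bound for $\psi$, since in the applications $\psi(\cdot,v)$ will degenerate. This forces one to work directly with $\psi^{1/n}$ rather than $\log\psi$: the factors of $\psi^{-1/n}$ that arise when differentiating $\log\psi$ in the chain rule must cancel precisely against the factor $\psi^{-1/n}$ produced by the AM--GM bound $\sum u^{p\bar p} \geq n\psi^{-1/n}(\cdot,v)$ and by $\Delta_u\rho$, rather than being treated separately. Tracking these cancellations while simultaneously optimizing the constants $A,B$ to produce the sharp coefficient $\tfrac{1}{2}$ is the technical heart of the argument.
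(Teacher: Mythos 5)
Your overall strategy is correct in outline (auxiliary function, maximum principle at an interior point, AM--GM, barrier), but there is a genuine gap at the step you yourself flag as the ``crucial computation,'' and the way you propose to resolve it does not in fact work.

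You differentiate the equation in the logarithmic form $\log\det(u_{i\bar j}) = \log\psi(\cdot,v)$, which via the Aubin--Yau inequality gives $\Delta_u(\Delta u) \geq \Delta g$ with $g := \log\psi(\cdot,v)$. Writing $g = n\log \psi^{1/n}(\cdot,v)$ and expanding $\Delta g$ produces not only $\psi^{-1/n}$ factors but also \emph{quadratic} terms of the form $-|\nabla(\psi^{1/n})|^2 / \psi^{2/n}$ and $-|\partial_t(\psi^{1/n})|^2\,|\nabla v|^2 / \psi^{2/n}$; these are bounded below only by $-K_2^2(1+|\nabla v|^2)\,\psi^{-2/n}$. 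Your proposed cancellation mechanism pairs these against $B\,\Delta_u\rho \geq B n\,\psi^{-1/n}(\cdot,v)$, which only supplies a factor $\psi^{-1/n}$; the mismatch of one extra power of $\psi^{-1/n}$ remains, and since $\psi(\cdot,v)$ is allowed to degenerate, that mismatch is not absorbable into a constant. Tracking through your final display, the needed inequality at $z_0$ reduces to something like $\Delta u(z_0) \gtrsim K_2^2\,\psi^{-1/n}(z_0,v(z_0))$, which is precisely the kind of $\psi$-dependent lower bound one cannot assume here.

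The paper sidesteps this by differentiating the $n$-th-root form $(\det(u_{i\bar j}))^{1/n} = \psi^{1/n}(\cdot,v)$ twice, rather than the logarithm. After dropping the non-positive third-order quadratic terms (via Cauchy--Schwarz, exactly as you do), this yields an inequality of the shape
$$
g\,\sum_{p} u^{p\bar p}\, u_{p\bar p j\bar j} \;\geq\; -A_1\bigl(1 + \|\nabla v\|^2 + \Delta v\bigr),\qquad g := \psi^{1/n}(\cdot,v),
$$
with $A_1$ controlled by $K_2 = \|\psi^{1/n}\|_{C^{1,1}}$ alone and, crucially, with the factor of $g$ on the \emph{left}. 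When this is substituted into the second-order condition for $H = \log(\Delta u) + b|z|^2$, one obtains $0 \geq \tfrac b2 \sum u^{p\bar p} - A_1 (1+\|\nabla v\|^2+\Delta v)/(g\,\Delta u)$, and the single $g^{-1}$ on the bad side is now matched by the $g^{-1}$ coming from $\sum u^{p\bar p} \geq n g^{-1}$. A two-case analysis on the size of $\Delta u(z_0)$ relative to $1+\|\nabla v\|^2+\Delta v(z_0)$ then closes the argument (and, incidentally, is where the coefficient $1/2$ in front of $1+\|\nabla v\|^2+\sup\Delta v$ comes from -- not from Young's inequality applied to the mixed term, as you suggest). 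Also note that the elementary inequality $\sum\lambda_j \le n(\prod\lambda_j)(\sum\lambda_j^{-1})^{n-1}$ is needed in the large-$\Delta u$ case to pass from a bound on $\sum u^{p\bar p}$ to a bound on $\Delta u$; your proposal has no analogue of this step. Your choices of $H = \log(n+\Delta u) + B\rho - Au$ versus the paper's $\log(\Delta u)+b|z|^2$ are inessential variations; the essential change you would need is to replace the logarithmic differentiation by the $n$-th-root differentiation.
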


\begin{proof}
	We consider the function $H(z) = \log (\Delta u) + b |z|^2$ defined and smooth on ${\Omega}$. Since $H$ is a continuous function on $\bar \Omega$, it attains its maximum over $\bar \Omega$ at some $z_0\in \bar\Omega$. If $z_0\in \partial \Omega$ then for any $z \in \Omega$
 $$
 \Delta u(z) \leq e^{H(z)} \leq e^{H(z_0)} \leq e^{b r^2} \sup_{\partial \Omega} \Delta u,	
 $$
 where $r := \sup_{\Omega} \vert z\vert $.
The estimate \eqref{prop:LaplaceEstimate} follows immediately in this case with any constant $C_2 \geq e^{br^2}$.  
	
 We now assume $z_0\in \Omega$ and do our computations at $z_0$. We can also assume that $u_{i\bar{j}}(z_0)$ is diagonal.  
Since $H$ is $C^2$-smooth in $\Omega$, by the maximum principle we have, for any $1\leq p, q\leq n$, 
\begin{flalign}
	0 &= \frac{\partial H}{\partial z_p}  = H_p= \frac{(\Delta u)_p}{\Delta u} + b \bar{z}_p \nonumber \\
	0 &= \frac{\partial H}{\partial \bar{z}_q}  = H_{\bar{q}}=\frac{(\Delta u)_{\bar q}}{\Delta u} + b z_q,\label{eq: Lap first derivative}
\end{flalign}
and 
\begin{flalign}
	0 &\geq \sum_{p=1}^n u^{p\bar{p}} H_{p\bar{p}}  = \sum_{p=1}^n u^{p\bar{p}} \left ( \frac{(\Delta u)_{p\bar{p}}}{\Delta u} - \frac{|(\Delta u)_p|^2}{(\Delta u)^2}  + b \right) \nonumber \\
	& = \sum_{p=1}^n u^{p\bar{p}} \left ( \frac{(\Delta u)_{p\bar{p}}}{\Delta u} -b^2 r^2  + b \right). \label{eq: Lap second derivative}
\end{flalign}
Differentiating the Monge-Amp\`ere equation $(\det (u_{i\bar j}))^{1/n} = \psi(\cdot,v)^{1/n} =g (\cdot,v)$ twice we obtain, using summation convention, 
\[
n^{-1}g  u^{p\bar q} u_{p\bar q j}  = g_j  + g_v   v_j, 
\]
\[
n^{-2} g  u^{k\bar l} u_{k\bar l \bar j} u^{p\bar q}u_{p\bar q j} + n^{-1}g   (u^{p\bar q})_{\bar j}u_{p\bar q j} + n^{-1}g  u^{p\bar q}u_{p \bar q j \bar j}   =2 Re( g_{vj} v_{\bar j}) + g_v v_{j\bar j} + g_{vv} |v_j|^2+ g_{j\bar j}.
\]
If $D$ is any partial derivative then  $Du^{k \bar q} = -u^{p\bar q} u^{k\bar l} Du_{p\bar l}$, hence, at $z_0$ (using that $u_{i\bar j}$ is diagonal)
\[
Du^{k \bar q} = -u^{q\bar q} u^{k\bar k} Du_{q\bar k}. 
\]
Using again that $(u_{i\bar j})$ is diagonal  we obtain 
\[
n^{-2}g u^{p\bar p} u^{q\bar q}u_{p\bar p \bar j}  u_{q\bar q j} - n^{-1} g u^{p\bar p} u^{q\bar q} |u_{p\bar q j}|^2 + n^{-1}g u^{p\bar p}u_{p \bar p j \bar j}  =2 Re( g_{vj} v_{\bar j}) + g_v v_{j\bar j} +g_{j\bar j},
\]
which is the same as 
\[
n^{-1}g \left| \sum _{p=1}^n u^{p\bar p} u_{p\bar p \bar j} \right|^2-  g \sum_{p,q=1}^nu^{p\bar p} u^{q\bar q} |u_{p\bar q j}|^2 + g \sum_{p=1}^nu^{p\bar p}u_{p \bar p j \bar j}  =n(2 Re( g_{vj} v_{\bar j}) + g_v v_{j\bar j} + g_{vv} |v_j|^2+g_{j\bar j}).
\]
Observe  that, by Cauchy-Schwarz inequality, 
\[
n^{-1} \left| \sum_{p=1}^n u^{p\bar p} u_{p\bar p \bar j} \right|^2-   \sum_{p,q=1}^n u^{p\bar p} u^{q\bar q} |u_{p\bar q j}|^2 \leq 
 n^{-1} \left| \sum_{p=1}^n u^{p\bar p} u_{p\bar p \bar j} \right|^2-   \sum_{p=1}^n|u^{p\bar p} u_{p\bar p j}|^2\leq 0.
\]
We then get 
\[
g \sum_{p=1}^n u^{p\bar p} u_{p\bar p j \bar j} \geq -A_1 (1 + \Vert \nabla v\Vert^2 + \Delta v),   
\]
where $A_1 > 0$ depends only on $\Vert \psi^{1\slash n}\Vert_{C^{1,1} (\bar \Omega \times [-C_0,0]}$.

Plugging this into \eqref{eq: Lap second derivative} we obtain at the point $z_0$,
\[
0\geq (b-b^2r^2 ) \sum_{p=1}^n u^{p\bar p} -\frac{A_1 (1 + \Vert \nabla v\Vert^2 + \Delta v)}{g \Delta u},
\]
where $ \Vert \nabla v\Vert :=  \Vert \nabla v\Vert_{C^0(\bar \Omega)}$.

 Choosing $b$ small enough (depending on $r$) we get at the point $z_0$,
 \[
 0\geq \frac{b}{2}\sum_{p=1}^n u^{p\bar p} -\frac{A_1 (1 + \Vert \nabla v\Vert^2 + \Delta v)}{g \Delta u}.  
 \]
 If $\Delta u(z_0) \leq e^{-br^2-1} (1 + \Vert \nabla v\Vert^2  +\Delta v(z_0))$, then 
 $$
 H(z_0)\leq \log (1 + \Vert \nabla v\Vert^2  +\Delta v(z_0)) - 1,
  $$ 
 hence for any $z \in \Omega$,
 $$
 \log \Delta u(z) \leq H(z) \leq H(z_0)\leq \log (1 + \Vert \nabla v\Vert^2 +\sup_{\Omega}\Delta v) - 1, 
 $$
 and the desired estimate follows. 
 
Assume now that $\Delta u(z_0) \geq e^{-br^2-1} (1+ \Vert \nabla v\Vert^2 + \Delta v(z_0))$. We then get at $z_0$ the inequality 
\begin{equation} \label{eq:Inverse}
\sum_{p=1}^n u^{p\bar p} \leq A_1 g^{-1}.
\end{equation}
Applying the inequality \eqref{eq:ElementaryIneq} with $\lambda_j := u_{j \bar j}$, we deduce from \eqref{eq:Inverse} that 
$$
\Delta u (z_0) \leq A_1 g^n g^{-(n-1)} \leq  A_2,
$$
where $A_2 := A_1 \sup_{\Omega \times [-C_0,0]} \psi^{1 \slash n}$.
Hence for any $z \in \Omega$, we have 
$$
\Delta u (z) \leq H (z) \leq H(z_0) \leq \Delta(z_0) + b \vert z_0\vert^2 \leq A_2 + b r^2.
$$
This completes the proof. 
\end{proof}

\subsection{Boundary second order a priori estimates}

\begin{pro} \label{prop:BoundaryEstimate}
Assume that  $\underline u \in PSH (\Omega) \cap C^1(\bar \Omega)$ is a strictly plurisubharmonic function on $\bar \Omega$ and   $dd^c \underline  u \geq \varepsilon \omega $ on $\Omega$, for some constant   $\varepsilon > 0$. Assume that $w\in SH(\Omega)\cap C^1(\bar \Omega)$ such that $\underline u  \leq w  < 0$ in $\Omega$  and $\underline u= w = 0$ in $\partial \Omega.$ Let $v \in PSH (\Omega) \cap C^2(\bar \Omega)$ such that $v \leq 0$ in $\bar \Omega$.  Then  any  solution $u\in PSH(\Omega) \cap C^4(\Omega) \cap C^{3}  (\bar \Omega)$   of \eqref{eq:MA} such that $\underline u \leq u \leq w$ in $\Omega$ satisfies the following a priori estimate
 
 \begin{equation}\label{eq:BoudaryEstimate}
\sup_{\partial \Omega}  \vert \nabla  u \vert + \sup_{\partial \Omega}  \Vert D^2  u \Vert \leq C_3,
 \end{equation}
where $C_3 > 0$ is a constant depending on an upper bound $C_0 := \max_{\bar \Omega} \vert \underline u \vert $,
$\Vert \underline  u\Vert_{C^2(\bar \Omega)}$, 
 $\Vert  v \Vert_{C^1 (\Omega)}$ and $\Vert \psi^{1 \slash n}\Vert_{C^{1} (\Omega \times [-C_0,0])}$, $\varepsilon^{-1}$. 
\end{pro}

\begin{proof}	
Since $\underline  u \leq u \leq w,$ we have
\begin{equation}\label{eq301}
	\sup_{\bar \Omega}\vert u \vert \leq C_0,
\end{equation}
and 
\begin{equation}\label{eq302}
	\sup_{\partial \Omega}\vert \nabla u \vert \leq C_1,
\end{equation}
where $C_0,C_1$ depending on the $C^1$-norms of $v$ and $w$ on $\partial \Omega$.

Then Proposition  \ref{prop:GradientEstimate}, yields a uniform $C^1$-estimate of $u$ in $\bar \Omega$ :
\begin{equation}\label{eq:GradientEstimate}
	\sup_{\bar \Omega}\vert \nabla u \vert \leq C'_1,
\end{equation}
where $C'_1 > 0$ is a uniform constant depending on $\Vert \underline u \Vert_{C^1(\bar \Omega)}$, 
$\Vert w \Vert_{C^{1}(\bar \Omega)}$, $\Vert \psi^{1 \slash n}\Vert_{C^{1} (\bar \Omega \times [-M,0])}$ and $ \Vert  v \Vert_{C^1 (\bar \Omega)}$.

\smallskip

The $C^2$-estimate on the boundary are done in three steps as usual.

The  estimates for tangential-tangential and tangential-normal second order derivatives follow from \cite{Guan98} (see also \cite{CKNS85}). Indeed  these estimates do not depend on a lower bound of $\psi$, and the last ones require strict plurisubharmonicity of $\underline u$  and a uniform bound on $\vert \nabla \underline u \vert$ and  $\vert \nabla u \vert$ near $\partial \Omega$. 

\smallskip
However the  estimate for the normal-normal second order derivative obtained in  \cite{Guan98} depend heavily on a positive lower bound of $\psi$ in $\bar \Omega \times \R$, while the ones obtained in \cite{CKNS85} do  require a local lower bound on $\psi$ inside $\bar \Omega \times \R$ and use   
 the  fact that $\psi $ is increasing in the second variable.

 In oder to overcome this difficulty we  use the fact that $\partial \Omega$ is strongly pseudoconvex and adapt the arguments  in \cite{Guan98} to our case.

\smallskip

We fix a point   $p \in \partial \Omega$ and let $\rho$ be a strongly plurisubharmonic defining function for $\Omega$. 
We can assume that $p=0$ is the origin and choose local coordinates $z=(z_1, \cdots, z_n)$, $z_j=x_j+i y_{j}$, $(1 \leq j \leq n)$ around the origin such that  $\rho_{z_\alpha}(0)=0$ for $\alpha<n$, $\rho_{y_n}(0)=0$, $\rho_{x_n}(0)=-1$ so that the $ x_n$-axis is the interior normal direction to $\partial \Omega$ at $0$.  For convenience  we set
$$
t_{2k-1}=x_k,~~ t_{2k}=y_k,~~ 1\leq k \leq n-1, ~~ t_{2n-1}=y_n, ~~ t_{2n}=x_n.
$$

Since $d \rho$ does not vanish near the boundary, and $u = 0$ in $\partial \Omega$, we may represent $u $ as

\begin{equation}\label{eq303}
u = h\rho,
\end{equation} 
where $h$ is a $C^2$-smooth function near the point $p=0$.

We will use the classical notations 
$$
 u_{t_\alpha} := \frac{\partial u}{\partial t_\alpha},  \, \, \, u_{t_\alpha t_\beta} = \frac{\partial^2 u}{\partial t_\alpha \partial t_\beta}, \, \, \, \, u_{j \bar k} := \frac{\partial^2 u}{\partial z_j \partial \bar z_k}\cdot
 $$

Since $\rho=0$ on $\partial \Omega$, by \eqref{eq303}, we see that $u_{x_n}(0)=-h(0)$. Observe that
near $0$ in $\Omega$ we have
$$
u_{t_\alpha t_\beta} =  h_{t_\alpha t_\beta} \rho + h_{t_\alpha} \rho_{t_\beta} + h_{t_\beta} \rho_{t_\alpha} + h \rho_{t_\alpha t_\beta} 
$$ and then
\begin{equation}\label{eq:tangential2}
	u_{t_\alpha t_\beta}(0)=-u_{x_n}(0)\rho_{t_\alpha t_\beta}(0)
	, ~~\alpha, \beta <2n.
\end{equation}
Therefore there exists a constant $C_2 > 0$ such that
\begin{equation}\label{eq304}
\vert u_{t_\alpha t_\beta}(0)\vert \leq C_2, ~~\alpha, \beta \leq 2n - 1,
\end{equation}
where $C_2 >0$ depends only on  the constant $C'_1$ in  \eqref{eq:GradientEstimate} (which depends on the $C^1$-bound of $\underline u$ at the boundary)  and an upper bound  of the Levi-form of $\rho$ on  $\partial \Omega$. 

To establish the estimates
$$
u_{t_\alpha x_n} \leq C_2, \, \, \alpha \leq 2n-1,
$$
we proceed as in  \cite{Guan98} (pages 691-693) based on the computations made in \cite[Lemma 1.3]{CKNS85}. Recall that the proof uses a generalized Hopf Lemma for the linearized operator $\mathcal L = u^{j \bar k} \frac{\partial^2}{\partial z_j \partial \bar z_k} $, which is strongly elliptic, applied to a suitable barrier function. It is easy to check that   the constant $C_2$ there depends  the $C^1$-norm of $\psi^{1\slash n} (\cdot,v)$  and a positive lower bound $\varepsilon$ of the eigenvalues of $dd^c \underline u$ (recall that $\underline  u$ is strictly plurisubharmonic on $\bar \Omega$),  but not on a lower bound of $\psi$.

 It remains to establish the estimate

\begin{equation}\label{eq305}
	\vert u_{x_n x_n}(0) \vert \leq C_3,
\end{equation}
for a uniform constant $C_3 > 0$ which does not depend on a lower bound on $\psi$.
 
We cannot rely on the proofs of \cite{CKNS85} and \cite{Guan98} where a lower bound of $\psi$ is required. However we will use an idea of \cite{CKNS85} (see p. 221) based on the fact that $\rho$ is strictly plurisubharmonic and  $- w_{x_n}(0) >0$ thanks to the Hopf Lemma.

Indeed by \eqref{eq304} we have already  the estimates 
\begin{equation}\label{eq306}
	\vert u_{p \bar q }(0) \vert  \leq C_2, ~~ p \leq n-1,  q \leq n,
\end{equation}
where $C >0$ is a uniform constant  which depends only on a $C^1$-bound of $u$ at the boundary and the geometry of $\partial \Omega$. 
Therefore it suffices to prove that

\begin{equation}\label{eq307}
	0 \leq u_{n\bar n}(0) =u_{x_n x_n}+u_{y_n y_n}(0)\leq C.
\end{equation}

Expanding $\det(u_{j\bar k})_{1 \leq j, k\leq n}$, we obtain 
\begin{equation}\label{eq308}
	\det(u_{j\bar k}(0))=au_{n\bar n}(0)+b,
\end{equation}
where 
$$
a :=\det(u_{p \bar q}(0))\vert_{\{1\leq p,q \leq n-1\}}
$$
and $b$ is bounded in view of \eqref{eq306}. 

Since $\det(u_{j\bar k})$ is bounded above, we only have to derive an a priori positive lower bound for $a$, which is equivalent to the inequality
\begin{equation}\label{eq309}
	\{u_{\alpha \bar \beta}(0)\}_{\alpha, \beta<n}\geq c_0I_{n-1},
\end{equation}
with a uniform constant $c_0>0$ which does not depend on a lower bound of $\psi$.
Indeed, from \eqref{eq:tangential2}, we deduce that
\begin{equation}\label{eq310}
u_{\alpha \bar \beta}(0)=-u_{x_n}(0)\rho_{\alpha \bar \beta}(0), ~~\alpha, \beta \leq n -1.
\end{equation}

Since $u\leq w < 0$ in $\Omega$ and $w = 0$ in $\partial \Omega$, we have by Hopf Lemma (see \cite[Lemma 6.2]{Eva10})
\begin{equation}\label{eq311}
	u_{x_n}(0)\leq w_{x_n}(0)=: -\delta_0<0.
\end{equation} 

Since $\partial \Omega$ is strongly pseudoconvex, together with \eqref{eq311} and \eqref{eq310}  we obtain the lower bound 
\eqref{eq309} with $c_0 := \delta_0$. The conclusion follows.
\end{proof}

\smallskip

\subsubsection{Global a priori estimates}
Here we deduce form the previous analysis  global a priori estimates for solutions  to the equation
\begin{equation} \label{eq:FixedPoint}
(dd^c u)^n = \psi (\cdot,u) \omega^n.
\end{equation} 

 \begin{proposition} \label{prop:GlobalEstimate} Let $\psi : \bar \Omega \times ]-\infty, 0] \longrightarrow \R^+$ be a smooth positive function such that $\psi^{1 \slash n} \in C^{1, 1} (\bar \Omega \times \R)$.  Let   $\underline  u \in C^2(\bar \Omega)$ be  a strictly plurisubharmonic function and  $w\in SH(\Omega)\cap C^1(\bar \Omega)$ such that $\underline  u \leq w < 0$ in $\Omega$  and $\underline  u=w$ in $\partial \Omega.$ 
 
 Then  if $u\in PSH(\Omega) \cap C^4(\Omega) \cap C^3 (\bar \Omega)$  is a solution to  the equation 
 \eqref{eq:FixedPoint}  such that  $\underline  u \leq u \leq w$ in $\Omega$, we have the following a priori estimates.
 
 \begin{equation}\label{eq:C1}
 \Vert \nabla u \Vert_{C^{0}(\bar \Omega)}\leq C_1,
 \end{equation}
where $C_1 > 0$ is a constant depending only on $\Vert \underline  u \Vert_{C^2(\bar \Omega)}, \Vert w \Vert_{C^{1}(\bar \Omega)}$ and $\Vert \psi^{1 \slash n}\Vert_{C^{1} (\bar \Omega \times [-M_0,0])}$ ($M_0 := \Vert u \Vert_{C^0 (\bar \Omega)}$),

and 

 \begin{equation}\label{eq:C2}
 \Vert \Delta u \Vert_{L^{\infty}(\Omega)}\leq C_2,
 \end{equation}
where $C_2 > 0$ is a constant depending only on $\Vert \underline  u \Vert_{C^2(\bar \Omega)}, \Vert w \Vert_{C^{1}(\bar \Omega)}$ and $\Vert \psi^{1 \slash n}\Vert_{C^{1,1} (\bar \Omega \times [-M_0,0])}$.
 \end{proposition}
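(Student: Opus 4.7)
The plan is to assemble the three a priori ingredients already established in this section — the improved gradient estimate (Corollary \ref{coro:GradientEstimate2}), the global Laplacian estimate (Proposition \ref{prop:LaplaceEstimate}), and the boundary second-order estimate (Proposition \ref{prop:BoundaryEstimate}) — and specialize them to the fixed-point case $v=u$, being careful to absorb on the left the Laplacian term that the statement of Proposition \ref{prop:LaplaceEstimate} carries on the right.

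First I would extract the $C^0$ bound. Since $\underline u \leq u \leq w \leq 0$ on $\bar\Omega$, one has $\|u\|_{C^0(\bar\Omega)} \leq \|\underline u\|_{C^0(\bar\Omega)}=:M_0$, which fixes the strip $\bar\Omega \times [-M_0, 0]$ on which the relevant norms of $\psi^{1/n}$ will be computed. Next, for $|\nabla u|$ on $\partial\Omega$: all three functions $u,\underline u, w$ vanish on $\partial\Omega$ with $\underline u \leq u \leq w$ in $\Omega$, so the tangential derivatives of $u$ vanish along $\partial\Omega$ while the inward normal derivative is trapped between those of $\underline u$ and $w$ by the Hopf comparison. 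This yields $\sup_{\partial\Omega} |\nabla u| \leq C$ depending only on $\|\underline u\|_{C^1(\bar\Omega)}$ and $\|w\|_{C^1(\bar\Omega)}$. Combining this boundary bound with Corollary \ref{coro:GradientEstimate2} applied to $u$ (the hypothesis $u\leq 0$ holds) delivers the global gradient bound \eqref{eq:C1}.

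For the Laplacian estimate I proceed in two steps. First, I apply Proposition \ref{prop:BoundaryEstimate} with $v = u$: the compatibility $v \leq w$ is just $u \leq w$, which is part of the hypothesis, and $u \in C^2(\bar\Omega)$. This provides a uniform bound $\sup_{\partial\Omega} \|D^2 u\| \leq C_3$ depending only on $\|\underline u\|_{C^2(\bar\Omega)}$, $\|w\|_{C^1(\bar\Omega)}$, the already controlled $\|u\|_{C^1(\bar\Omega)}$, and $\|\psi^{1/n}\|_{C^{1,1}(\bar\Omega \times [-M_0, 0])}$; in particular $\sup_{\partial\Omega} \Delta u \leq C_3'$. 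Secondly, I apply Proposition \ref{prop:LaplaceEstimate} with $v = u$, whose conclusion then reads
\[
\sup_{\bar\Omega}\Delta u \leq \sup_{\partial\Omega}\Delta u + \frac{1+ \|\nabla u\|_{C^0(\bar\Omega)}^2 + \sup_\Omega \Delta u}{2} + C_2',
\]
and rearranging to absorb $\tfrac{1}{2}\sup_\Omega \Delta u$ on the left yields
\[
\sup_{\bar\Omega}\Delta u \leq 2\sup_{\partial\Omega}\Delta u + 1 + \sup_{\bar\Omega}|\nabla u|^2 + 2 C_2'.
\]
Since $u$ is plurisubharmonic, $\Delta u \geq 0$, so $\|\Delta u\|_{L^\infty(\Omega)} = \sup_{\bar\Omega}\Delta u$ is bounded by a constant $C_2$ with the required dependencies, giving \eqref{eq:C2}.

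The one delicate point in this plan is the absorption step: Proposition \ref{prop:LaplaceEstimate} was stated with $v$ a separate function, and using it at the fixed point is legitimate only if the constant $C_2'$ produced there depends on $v$ solely through a $C^0$ bound (here $M_0$) and through its gradient — not through higher norms — otherwise the argument would be circular. A quick inspection of the proof of Proposition \ref{prop:LaplaceEstimate} confirms that the $v$-dependent quantities entering $C_2'$ are only $M_0$, $K_2 = \|\psi^{1/n}\|_{C^{1,1}(\bar\Omega \times [-M_0, 0])}$, and $\sup_{\bar\Omega}|\nabla u|$ (already controlled by $C_1$ from the first step), so the absorption is valid and the final constants carry exactly the dependencies stated in the proposition.
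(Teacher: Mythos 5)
Your proof is correct and follows essentially the same route as the paper: deduce the $C^0$ and boundary gradient bounds from the barriers, apply Corollary \ref{coro:GradientEstimate2} for the interior gradient, then Proposition \ref{prop:BoundaryEstimate} for $\Delta u$ on $\partial\Omega$, and finally Proposition \ref{prop:LaplaceEstimate}. You are in fact slightly more careful than the paper in explicitly carrying out the absorption of the $\tfrac12\sup_\Omega\Delta u$ term arising from setting $v=u$ in Proposition \ref{prop:LaplaceEstimate}, a step the paper leaves implicit.
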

 
  \smallskip
 
\begin{proof}	
Since $\underline  u \leq u \leq w \leq 0$ in $\bar \Omega$,  we have
\begin{equation}\label{eq301}
	\sup_{\bar \Omega}\vert u \vert \leq C_0 := \sup_{\bar \Omega}\vert \underline u \vert,
\end{equation}
and 
\begin{equation}\label{eq302}
	\sup_{\partial \Omega}\vert \nabla u \vert \leq C_1 := \max \{\sup_{\partial \Omega}\vert \nabla \underline u \vert, \Vert \nabla w \Vert_{L^{\infty}(\partial \Omega)}\}\cdot
\end{equation}

Then Corollary  \ref{coro:GradientEstimate2} yields a uniform $C^1$-estimate of $u$ in $\bar \Omega$ :
\begin{equation}\label{eq3}
	\sup_{\bar \Omega}\vert \nabla u \vert \leq C'_1,
\end{equation}
where $C'_1 > 0$ is a uniform constant.

\smallskip

Then we can apply Proposition \ref{prop:BoundaryEstimate} to obtain a uniform estimate of $\Delta u$ on $\partial \Omega$. 
To conclude it's enough to apply Proposition \ref{prop:LaplaceEstimate}.
\end{proof}
\smallskip
\subsection{Proof of Theorem \ref{thm:new}}
Recall that we want to solve the following Dirichlet problem.
\begin{equation} \label{eq:DP0}
\left\lbrace
\begin{array}{lcr}
(dd^c u)^n  = \psi (\cdot,u) \, \omega^n & \textnormal{on} & \Omega\\
u=0& \textnormal{ on} & \partial \Omega.
\end{array}
\right.
 \end{equation}
 
This is a fixed point problem, so we will use an iterative method to solve it. Since $\psi$ can degenerate at the boundary, we  proceed in two steps.

\smallskip
{\bf Step 1.} We assume that $\psi > 0$ in $\bar \Omega \times ]-\infty,0]$. %, hence $\psi \geq \varepsilon_0 > 0$ in $\bar \Omega \times ] - K,0]$, where $K := \Vert \underline u \Vert_{C^0(\bar \Omega)}$. 
  Applying Theorem \ref{thm:CKNS}, we will  construct inductively a sequence $(u_j)$ in $PSH (\Omega) \cap C^{\infty}(\bar \Omega)$ such that   $u_0 := \underline u$ and for each  $j \in \N$,  $u_{j+1} = T(u_j)$ is the unique solution to  the following problem
\begin{equation} \label{eq:DP1}
\left\lbrace
\begin{array}{lcr}
(dd^c u_{j+1})^n  = \psi (\cdot,u_j) \, \omega^n & \textnormal{on} & \Omega\\
u_{j+1} =0& \textnormal{ on} & \partial \Omega.
\end{array}
\right.
 \end{equation}
 
 Now the main observation is that the sequence $(u_j)_{j \in \N}$ is increasing and satisfies $\underline u \leq u_j \leq u_{j+1} \leq \bar u$ in $\Omega$. This follows from the the comparison principle Proposition \ref{prop:CP} and the fact that $\psi$ is non increasing.
 
 Indeed for $j = 0$ we set  $u_0 := \underline u$. 
 Assume that for a fixed $j \in \N^*$, we have constructed functions $ (u_k)_{1 \leq k \leq j}$ in $ PSH(\Omega) \cap C^{\infty}({ \bar \Omega })  $  such that for $1 \leq k \leq j$, we have
 $$ 
 (dd^c u_{k})^n  = \psi (\cdot,u_{k -1})\,  \omega^n, \, \, \text{on} \, \, \Omega \, \, \, \text{and} \, \, \,  u_k = 0 \, \, \text{in} \, \, \,  \partial \Omega,
 $$ 
 on $\Omega$ and  $ \underline  u  \leq u_{k-1} \leq  u_{k} \leq \bar u$ on $\Omega$.
 
 By Theorem \ref{thm:CKNS}, the Dirichlet problem \eqref{eq:DP1} admits a unique solution
 $u_{j+1} \in  PSH(\Omega) \cap C^{\infty}({ \bar \Omega })$.

 Since   $\underline u \leq u_{j-1} \leq u_j  \leq \bar u$ in $\Omega$ and $\psi$ is non increasing in the last variable, we have
 \begin{eqnarray*}
 (dd^c \bar u)^n \leq \psi(\cdot, \bar u) \, \omega^n   &\leq & \psi (\cdot,u_j) \,   \omega^n = (dd^c u_{j+1})^n \\
 & \leq &  \psi (\cdot,u_{j-1})\,  \omega^n  = (dd^c u_{j})^n \\
 &  \leq & \psi (\cdot,\underline u) \, \omega^n  \leq (dd^c \underline u)^n, \, \, \text{on} \, \, \Omega. 
 \end{eqnarray*}
 It follows from the comparison principe  that  $\bar u \geq u_{j+1} \geq u_j \geq \underline u$ in $\Omega$.
 This achieves the construction.

\smallskip

Since the sequence  $(u_j)$ is increasing, it converges a.e. to a function $u \in PSH (\Omega) \cap L^{\infty} (\Omega)$ such that $\underline u \leq u \leq \bar u$ in $\Omega$. 

Setting $K_j:= \sup_{\bar{\Omega}}|\nabla u_j|^2$, we see  by Proposition \ref{prop:GradientEstimate} and Proposition \ref{prop:BoundaryEstimate} that
	\[
	K_{j+1}\leq C_1+ 2^{-1}K_j,
	\]
 for any $j$ with  a uniform constant $C_1$ independent of $j$. Thus $K_j\leq 2C_1 + K_0$, for all $j$, hence the sequence  $(\Vert \nabla u_j\Vert_{C^0(\bar \Omega)}$ is  bounded by a constant which does not depend on $j$. It follows from Ascoli-Arzel\`a theorem that actually $u_j \to u$ uniformly on $\bar \Omega$ and $u \in PSH (\Omega) \cap C^{0,1} (\bar \Omega)$. 

Moreover, passing to the limit in the weak sense in \eqref{eq:DP1}, we see that $u $ is a weak solution to the Dirichlet problem \eqref{eq:DP0}.

On the other hand, the inequalities  $\underline u \leq u_j \leq \bar u$ on $\bar \Omega$, yields 
$$
\sup_{j} \left(\Vert u_j \Vert_{C^0(\bar \Omega)} + \Vert \nabla u_j\Vert_{C^0(\partial \Omega)}\right)< + \infty
$$  
By Proposition \ref{prop:GradientEstimate}, it follows  that  $\sup_j \Vert \nabla u_j\Vert_{C^0(\bar \Omega)}  < + \infty$.
 It follows from   the boundary Laplacian estimate Proposition \ref{prop:BoundaryEstimate} applied to $u  = u_{j + 1}$ and $v= u_j$ (here we only need the bound for the gradient $\nabla u_j$) that the sequence
 $(\Vert \Delta u_{j+1} \Vert_{ C^0(\partial \Omega)})$ is   bounded.  
 Hence there exists a uniform constant  $C' > 0$ such that for any $j \in \N$, 
 $$
  \Vert \nabla u_{j }\Vert_{C^0(\bar \Omega)} +  \Vert \Delta u_{j+1} \Vert_{ C^0(\partial \Omega)} \leq C'.
 $$
 Applying Proposition \ref{prop:LaplaceEstimate} with $u = u_{j+1}$ and $v = u_j$, we see that the constants $L_j :=  \Vert \Delta u_j \Vert_{L^{\infty}(\Omega)}$ satisfy the inequality 
 $L_{j+1} \leq C_2 + 2^{-1}  L_j$ for any $j \in \N$. Hence $L_j \leq 2 C_2 + L_0$ for any $j \in \N$.

Therefore there exists a uniform constant $C'' > 0$  such that
 \begin{equation} \label{eq:Lestimate}
 \Vert u_{j}\Vert_{C^{1, \bar 1}(\bar \Omega)} \leq C''.
 \end{equation}
  for any $j \in \N$, where $C''$ depends only on $C'$, the uniform barriers $ \underline u$ and $ \bar u$ of the $u_j's$ and an upper bound of $\psi$ on $\bar \Omega \times [-M_0, 0]$ as well as a lower bound of $\psi$ on $\Omega_0 \times [-M_0, - A_0]$, the later depending only on $\Omega$ and $\bar u$ and an upper bound  of $\psi$ on $\bar \Omega \times [-M_0,0] $ where $M_0$ is a uniform  bound of the $u_j's$ (see Proposition \ref{prop:GradientEstimate}).
  
  Taking the limit in \eqref{eq:Lestimate} we obtain $\Vert u\Vert_{C^{1, \bar 1}(\bar \Omega)} \leq C.$ Hence $u \in C^{1, \bar 1}(\bar \Omega)$.

\smallskip

{\bf Step 2 :} Assume now that $\psi \geq 0$, $\psi > 0$ in $\Omega \times ]-\infty,0[$ and $\underline u$ is a strict subsolution to the Dirichlet problem \eqref{eq:MA} in the sense that it satisfies the inequality \eqref{eq:StrictSubsolution} for some constant $\epsilon_0$. For  a fixed  $0 < \varepsilon \leq 1$, set $\psi_\varepsilon := \psi + \varepsilon^n$. Then consider  the Dirichlet problem
\begin{equation} \label{eq:DPepsilon}
\left\lbrace
\begin{array}{lcr}
(dd^c u)^n= \psi_ \varepsilon (\cdot,u) \,  \omega^n & \textnormal{in} & \Omega\\
 u =\phi& \textnormal{ on} & \partial \Omega.
\end{array}
\right.
 \end{equation}
Since $  \psi_ \varepsilon  \geq  \varepsilon^n > 0 $,  we want to apply the first step to  solve this problem. To this end we need to construct a subsolution $ \underline u_{\varepsilon}$ and a supersolution $\bar u_{\varepsilon}$ to the problem \eqref{eq:DPepsilon} such that $\underline u_{\varepsilon} \leq \bar u_{\varepsilon}$ in $\Omega$. 

Indeed, observe that $\bar u_{\varepsilon} := \bar u$ is clearly a supersolution since $\psi \leq \psi_\varepsilon$. By strong pseudoconvexity, the domain $\Omega$ admits a  smooth strictly purisubharmonic defining function $\rho$ such that $dd^c \rho \geq \omega$ in $\Omega$. For  $0 < \varepsilon < 1$, the function $\underline  u_\varepsilon := \underline  u + \varepsilon \rho$ is plurisubharmonic on $\Omega$ and $\underline  u + \rho \leq \underline  u_\varepsilon \leq \underline  u$ in $\Omega$.  Moreover if we set
$$
M := \Vert \rho\Vert_{C^0(\bar \Omega)} \cdot \Vert \partial_t \psi \Vert_{C^0(\bar \Omega \times I)},
$$ 
where $I := [- C_0,0]$ and $C_0 := \Vert \underline  u + \rho\Vert_{C^0(\bar \Omega)}$,
we get 
\begin{eqnarray*}
(dd^c \underline  u_\varepsilon )^n &\geq& (dd^c \underline  u)^n + \varepsilon^n (dd^c  \rho)^{n} 
\geq (\psi (\cdot,\underline  u) + \epsilon_0 + \varepsilon^n) \omega^n \\
&\geq & (\psi (\cdot,\underline  u_\varepsilon) - M \varepsilon +  \epsilon_0 + \varepsilon^n) \omega^n \geq  (\psi (\cdot,\underline  u_\varepsilon) + \varepsilon^n) \omega^n,
\end{eqnarray*}
if  we choose $\varepsilon > 0$ so small that $M \varepsilon \leq \epsilon_0$.

Hence for $0 < \varepsilon < \varepsilon_0 := \min \{1,\epsilon_0 \slash M\}$, the function $\underline  u_\varepsilon$ is
a subsolution to the Dirichlet problem \eqref{eq:DPepsilon} such that $\underline  u_\varepsilon \leq  \underline u \leq  \bar u$ in $\bar \Omega$.

From the first case, it follows that for any $0 < \varepsilon < \varepsilon_0 $, the Dirichlet problem \eqref{eq:DPepsilon} admits a solution $ u_{\varepsilon} \in PSH (\Omega) \cap C^{1, \bar 1}(\bar \Omega)$ such that $\underline  u + \rho \leq  \underline  u + \varepsilon \rho \leq u_\varepsilon \leq \bar u $ in $\Omega$. 

Using Ascoli-Arzela theorem we can extract a subsequence $(u_{\varepsilon_j})$ which converges uniformly on $\bar \Omega$ to a function $u \in PSH (\Omega) \cap C^{0,1}(\bar \Omega) $. 
Hence $(dd^c u)^n = \psi (\cdot,u) \omega^n$ in the weak sense on $\Omega$ and $u= 0$ on $\partial \Omega$ i.e. $u$ is a weak solution to the Dirichlet problem \eqref{eq:DP0}.

Observe that we have for any $\varepsilon \in ]0,\varepsilon_0[$, $\underline u_1 := \underline u + \rho \leq u_\varepsilon \leq \bar u < 0$ on $\Omega$. 

We claim that  there exists a uniform constant $C > 0$  independent of $\varepsilon \in ]0,\varepsilon_0[$ such that
\begin{equation} \label{eq:Fest2}
\Vert  u_\varepsilon \Vert_{C^{1,\bar 1} (\bar \Omega)}  \leq C.
\end{equation}
%We could apply Theorem \ref{thm:GlobalEstimate}  but $u_\varepsilon$ does not satisfy the regularity conditions required by this theorem. 

Indeed fix $0 < \varepsilon < \varepsilon_0 $.  We know   from the first step that by construction  $u_\varepsilon$ is the limit of a sequence $(u_\varepsilon^j)_{j \in \N}$ in $PSH(\Omega) \cap C^{\infty}(\bar \Omega)$ satisfying the equation
$$
(dd^c u_\varepsilon^{j +1})^n =  (\psi (\cdot,u_\varepsilon^j) + \varepsilon^n) \omega^n,
$$
on $\Omega $ and  $\underline u_\varepsilon \leq u_\varepsilon^j \leq \bar u$ on $\Omega$ for any $ j \in \N $.

Since $\underline u_1 := \underline u + \rho \leq  u_\varepsilon^j \leq \bar u$  on $\Omega$ for any $\varepsilon \in ]0,\varepsilon_0[$ and any $j \in \N$, 
it follows from  the estimates \eqref{eq:Lestimate} in the first step that 

%there exists a constant $L_1 >0$ such that for any $\varepsilon \in ]0,1[$ and any $j \in \N$, we have
%$$
%\Vert u_\varepsilon^j\Vert_{C^0(\bar \Omega)} + \Vert \nabla u_\varepsilon^j\Vert_{C^0(\partial \Omega)} \leq L_1.
%$$ 
%By Proposition \ref{GradientEstimate1} we see that  $\sup_{\varepsilon,j} (\Vert \nabla u_\varepsilon^j\Vert_{C^0(\Omega)}) < \infty$. Theorem  \ref{thm:new} and obtain the following inequalities 
 \begin{equation} \label{eq:Final2}
 \Vert u_\varepsilon^j\Vert_{C^{1, \bar 1}(\bar \Omega)} \leq C,
 \end{equation}
where $C > 0$ does not depend on $\varepsilon, j$. 
Passing to the limit in \eqref{eq:Final2} as $j \to + \infty$ for fixed $\varepsilon \in ]0,\varepsilon_0[$, we obtain the required estimate \eqref{eq:Fest2}, which proves our claim.

By Ascoli-Arzela theorem there exists a sequence $\varepsilon_j \to 0$ such that $u_j := u_{\varepsilon_j} \to u$ uniformly on $\bar \Omega$, where $u \in PSH(\Omega) \cap C^{0,1} (\bar \Omega)$ and $\underline u \leq u \leq  \bar u < 0$ on $\Omega$.

Moreover passing to the limit in \eqref{eq:Fest2} as $\varepsilon = \varepsilon_j \to 0$, we obtain the required estimate
$$
\Vert  u \Vert_{C^{1,\bar 1} (\bar \Omega)}  \leq C,
$$
which  proves that $u \in C^{1,\bar 1} (\bar \Omega)$.

 To prove that $u \in C^{\infty}(\Omega)$, we will use the complex analogue of Evans-Krylov type argument.

 Recall that for each $\varepsilon \in ]0,\varepsilon_0[$,  $u_\varepsilon \in PSH(\Omega) C^{1,\bar 1} (\bar \Omega)$ is a solution to the following Monge-Amp\`ere equation 
 $$
 (dd^c u_\varepsilon)^n = g_\varepsilon  \, \omega^n \, \, \text{on} \, \, \Omega, \, \, \, \text{where} \, \, \, g_\varepsilon  (z):=  \psi(z,u_\varepsilon(z))   + \varepsilon^n.
 $$
 Observe that the right hand side $g_\varepsilon $ of this equation  is locally uniformly bounded from below by a positive constant independent of $\varepsilon$. 
 
 Indeed we have  $g_\varepsilon (z)  \geq  \psi (z,u_\varepsilon(z)) \geq \psi (z,\bar u(z)) > 0 $ in $\Omega$ since $\psi$ is non increasing and $ u_\varepsilon \leq \bar u < 0$ in $\Omega$. 
 Moreover $g_\varepsilon^{1 \slash n}  \in C^{0,1}(\bar \Omega)$ and $\Vert g_\varepsilon^{1 \slash n} \Vert_{C^{0,1}(\bar \Omega)}$ is uniformly bounded by a constant independent of $\varepsilon \in ]0,\varepsilon_0[$.
 
 We can then apply the complex Evans-Krylov local argument  provided in \cite{Wang12} (see also \cite{DZZ11} and \cite[Theorem 14.9]{GZ17}). 
 Then  for  any subdomain $\Omega' \Subset \Omega$ and any  $\alpha \in ]0,1[$, there exists 
 a constant $C' >0$  such that  for any  $\varepsilon \in ]0,\varepsilon_0[$,
$$
 \Vert u_\varepsilon \Vert_{C^{2,\alpha}(\Omega')} \leq C',
  $$
 where $C'$ does not depend on $\varepsilon \in ]0,\varepsilon_0[$.
  It follows that $u \in C^{2,\alpha}(\Omega')$. 
 
 To conclude that $u \in C^{\infty}(\Omega)$, we will use the Schauder theory as explained in  \cite[Section 14.3.2]{GZ17}. 
 More precisely the operator 
 $$
 F (u) := \text{det} (u_{j \bar k}) 
 $$
 is locally uniformly elliptic  on the class of smooth strictly plurisubharmonic functions $u $ on $\Omega$.
 
 Fix an open set $\Omega'' \Subset \Omega$ and take a subdomain $\Omega'$ such that $\Omega'' \Subset \Omega '\Subset \Omega$.
 Recall that $u   \in C^{2,\alpha}(\Omega')$ is a solution to the following equation on $\Omega'$ 
  $$
 F (u) = g := \psi(\cdot,u).
 $$
 Since $g \in C^{\alpha} (\Omega')$, for a fixed $\zeta \in \C^n$ with $\vert \zeta \vert = 1$, the difference quotients 
 $$
 u^h (z) := \frac{u (z +h \zeta) - u(z)}{h}, 
 $$
 are defined in a neighborhood $V$ of $\bar \Omega''$ for $\vert h\vert > 0 $ small enough, and satisfies a uniformly elliptic second order linear equation
 $L^h u^h = g^h$ in $V$ with H\"older continuous coefficients and H\"older continuous right hand side (see \cite[Section 14.3.2]{GZ17}, \cite[Corollary 6.3]{GT98}).
  By the interior Schauder a priori estimates in \cite[Corollary 6.3]{GT98}, it follows that $u^h \in C^{2,\alpha}(V)$ with a uniform control on its $C^{2,\alpha}(\Omega'')$-norm (independent of $h$ and $\zeta$). Hence $u\in C^{3,\alpha} (\Omega'')$ for any $\Omega'' \Subset \Omega$.
By iterating this argument we conclude that  $u \in C^{k}(\Omega'')$ for any $k \in \mathbb N$ and any $\Omega'' \Subset \Omega$. Hence $u \in C^{\infty}(\Omega)$.
%\end{proof}

%\begin{rem} \label{rem:StrictSubsolution} For the exitence part in Theorem \ref{thm:new}, it is enough to assume the existence of a subsolution $\underline u$ to the Dirichlet problem \ref{eq:MA}. Then it is easy to construct a strict subsolution by setting $\underline u_\epsilon := \underline u + \epsilon \rho$ for $\epsilon >0$ small enough, where $\rho$ is a strictly plurisubharmonic defining  function for $\Omega$ such that $dd^c \rho \geq \omega$.  This follows from the inequality
%$$
%(dd^c \underline u_\epsilon)^n \geq (dd^c \underline u)^n + \epsilon^n (dd^c \rho)^n \geq \psi (\cdot, \underline u)  \omega^n + \epsilon^n \omega^n.
%$$
%Since $\underline u_\epsilon \leq \underline u$ and $\psi $ is non increasing in the second variable, we have
%$$
%(dd^c \underline u_\epsilon)^n \geq (\psi (\cdot, \underline u)  +   \epsilon^n) \omega^n,
%$$
%on $\Omega$.
%This proves that $\underline u_\epsilon$ is a strict subsolution to \eqref{eq:MA}.
%\end{rem}

\section{The existence of the first eigenvalue}

We will now give the proof of  Theorem \ref{thm1}, deduce some properties of the first eigenvalue and give an application.

\subsection{Proof of Theorem \ref{thm1}}
We follow the strategy of Lions \cite{Lions86}.

Recall that for $a\in \mathcal{A}(\Omega)$, the first eigenvalues of the operator $- L_a$ is given by te formula 

\begin{equation}\label{eq317}
\gamma_1(a) := \sup \{\gamma \geq 0:~ \exists u \in C^2(\Omega),~~ u <0 ~~\textnormal{and}~~ L_a u \geq -\gamma u f \}.
\end{equation} 

\begin{proof}
We define the following number
\begin{equation}\label{eq318}
	\lambda_1:=\inf\{ \gamma_1(a):~~a\in \mathcal{A}(\Omega)\}.
\end{equation}

 Following Lions, we consider the Dirichlet problem
	
	\begin{equation}\label{eq319}
	\left\lbrace
	\begin{array}{lcr}
	(dd^c  u)^n=(1-\lambda u)^nf^n\omega^n & \textnormal{in} & \Omega,\\
	u\equiv 0& \textnormal{ on} & \partial \Omega,
	\end{array}
	\right.
	\end{equation}
	where $\lambda > 0$ is a parameter.
	
	\smallskip
	
	Then we introduce the  following real number :
	
	\begin{equation}\label{eq320}
		\mu_1:=\sup\{ \lambda \geq 0:~ \exists u\in \mathcal P_0 (\Omega),~ \textnormal{solution of} ~\eqref{eq319} \},
	\end{equation}
	where 
\begin{equation}\label{eq316}
	\mathcal{P}_0(\Omega):=\{ u \in PSH(\Omega) \cap C^{2}(\Omega) \cap C^0(\bar \Omega) \, ; \,  u_{|_{\partial \Omega}}=0\}.
\end{equation}
	
 By  Theorem \ref{thm:CKNS} there exists $u_0 \in PSH (\Omega) \cap C^{\infty} (\bar \Omega)$ solution to the  
 problem \eqref{eq319} with $\lambda = 0$ i.e. $(dd^c u_0)^n = f^n \omega^n$.  Hence $\mu_1$ is well defined and $0\leq \mu_1 \leq +\infty$.
	 	 
  Observe that since the right hand side $\psi (z,u) := (1-\lambda u)^n f(z)^n \geq f^n (z) $  in $\bar \Omega \times ]-\infty , 0]$, the function $u_0$ is a supersolution to the Dirichlet problem \eqref{eq319} for any $\lambda > 0$.	
 \smallskip
	 
 The proof will be done in several steps.

	 \smallskip
	 
 {\bf Step 1 :} We  show that $ \mu_1 \leq \lambda_1.$
 
 Indeed if  $\lambda \geq 0$ is such that $u_\lambda$ is a solution to the problem \eqref{eq319},  by the formula \eqref{eq217}
 for any $a \in \mathcal A(\Omega)$ we have
 $$
 L_a u \geq (1-\lambda u_\lambda)f ~\textnormal{in}~ \Omega, ~ u_\lambda<0 ~\textnormal{in}~ \Omega, ~u_\lambda=0~ \textnormal{on}~\partial \Omega.
 $$
	
 Then $\lambda \leq \gamma_1 (a) $. Therefore he have $0 \leq \mu_1 \leq \lambda_1 < \infty$. 

\smallskip 
	
{\bf Step 2 :} We  prove that $\mu_1 \geq \Vert u_0 \Vert_{C^0(\bar \Omega)}^{-1}$.

 Indeed, if $\lambda <\Vert u_0 \Vert_{C^0(\bar \Omega)}^{-1}$, we set $C=(1-\lambda \Vert u_0 \Vert_{C^0(\bar \Omega)})^{-1}$. Then $\underline u=Cu_0 \in \mathcal{P}_0(\Omega)$ and
	$$
	(dd^c\underline u)^n=(dd^cCu_0)^n=C^n(dd^cu_0)^n=C^nf^n\omega^n.
	$$
 Also
 \begin{align*}
 Cf-(1-\lambda\underline u)f&=\lambda Cf(\Vert u_0 \Vert_{C^0(\bar \Omega)}+u_0)\\&\geq 0.
 \end{align*}
 Hence 
 \begin{equation}\label{eq321}
 \left\lbrace
 \begin{array}{lcr}
  (dd^c \underline u)^n\geq (1-\lambda \underline u)^nf^n\omega^n & \textnormal{on} & \Omega,\\
 \underline u\equiv 0& \textnormal{ on} & \partial \Omega.
 \end{array} \right.
\end{equation}
 In other words $\underline u$ is a subsolution of the problem \eqref{eq319}.	
 Since   $(1-\lambda \underline u)^n f^n \omega^n \geq f^n \omega^n = (dd^c u_0)^n$ in $ \Omega$, it follows	
 that $u_0$ is a supersolution of the problem \eqref{eq319} such that $\underline u \leq u_0$ in $\Omega$. Since the right hand side of the equation \eqref{eq319} is positive, we can apply  Theorem \ref{thm:new} to conclude that for $0<\lambda<\Vert u_0 \Vert_{C^0(\bar \Omega)}^{-1}$  there exists $u_\lambda \in \mathcal P_0 (\Omega) \cap C^{\infty} (\Omega)$ solution of \eqref{eq319}  such that $\underline u_1 \leq u_\lambda \leq u_0$ in $\Omega$. This proves that any   $\lambda<\Vert u_0 \Vert_{C^0(\bar \Omega)}^{-1}$, satisfies the inequality  $\lambda \leq \mu_1$. Therefore we have shown that 
  \begin{equation} \label{eq:minoration}
  \Vert u_0\Vert_{C^0(\bar \Omega)}^{-1}\leq \mu_1 \leq \lambda_1.
 \end{equation}	
  
 \smallskip
 
{\bf Step 3 :} We prove that  for any $0<\lambda<\mu_1$, the problem \eqref{eq319} admits a solution $u_\lambda$.

Indeed, if $0<\lambda<\mu_1$,  there exists $\lambda< \lambda'<\mu_1$ and $u_{\lambda'}\in \mathcal P_0(\Omega)$  
 such that
 $$
 (dd^cu_{\lambda'})^n=(1-\lambda'u_{\lambda'})^nf^n\omega^n\geq (1-\lambda u_{\lambda'})^nf^n\omega^n.
 $$
 Therefore $u_{\lambda'}$ is a subsolution to the problem \eqref{eq319}. Since we already know that $u_0$ is a supersolution, applying again Theorem \ref{thm:new}  yields the existence of  a solution $u_\lambda \in \mathcal P_0(\Omega)$.
	
 \smallskip
	
 {\bf Step 4  :}  We show that $\Vert u_\lambda\Vert_{C^0(\bar \Omega)} \to  + \infty$ as $\lambda \to \mu_1.$
	
  Assume the converse is true i.e.  there exists $M>0$ such that for some subsequence $(\lambda_j)$ in $]0, \mu_1[$, converging to $\mu_1$ we have, $\Vert u_{\lambda_j} \Vert_{C^0(\bar \Omega)}\leq M<+\infty.$ Then	
  $$
  -M\leq u_{\lambda_j}\leq 0 ~~\textnormal{in}~ \bar \Omega.
  $$
 and for any $ \lambda = \lambda_j$, we have 
 $$
 (dd^cu_\lambda)^n=(1-\lambda u_\lambda)^nf^n\omega^n\leq (1+\mu_1M)^nf^n\omega^n.
 $$
	Observe that $\underline u := (1+\mu_1 M) \, u_0$ solves the following problem 
	\begin{equation}
	\left\lbrace
	\begin{array}{lcr}
	(dd^c \underline u)^n=(1+\mu_1 M)^nf^n\omega^n& \textnormal {in} &\Omega,\\
	\underline u =0 & \textnormal{on} & \partial\Omega,
	\end{array}
	\right.
	\end{equation} 
	 and then by the comparison principle  $\underline u \leq u_\lambda$ on $\bar\Omega$.
	
	On the other hand, since 
	$$
	(dd^c u_\lambda)=(1-\lambda u_\lambda)^nf^n\omega^n\geq f^n\omega^n,
	$$
 and $w := u_0$ solves the problem
 \begin{equation}
 \left\lbrace
 \begin{array}{lcr}
 (dd^c w)^n=f^n\omega^n& \textnormal {in} &\Omega,\\
 w=0 & \textnormal{on} & \partial\Omega,
 \end{array}
 \right.
 \end{equation} 
 and we have $ u_\lambda \leq w$ on $\bar \Omega$.
	
 Thus $\underline u \leq u_\lambda \leq w < 0$  on $ \Omega$. Since  $\underline u$ is strictly plurisubharmonic on $\Omega$, it follows  from Proposition \ref{prop:GlobalEstimate}, that there exists a uniform 
 constant  $C>0$  such that for any $\lambda = \lambda_j$
 $$ 
 \Vert u_\lambda\Vert_{C^{1,\bar 1}(\bar{\Omega})}\leq C.
 $$ 	
 By the same reasoning as in the proof of Theorem \ref{thm:new} (Step 2) based on the complex analogue of Evans-Krylov theory we conclude that, up to extracting a subsequence we can assume that  $\{u_{\lambda_j}\}$  converges in the $C^{2}(\bar \Omega')$-norm for any $\Omega' \Subset \Omega$ to a function $u_{\mu_1} \in \mathcal P_0(\Omega)$ solution of \eqref{eq319} \ when $\lambda \to \mu_1.$ 
 
 As in the second step,  we see that for $\delta<\Vert u_{\mu_1}\Vert_{C^0(\bar \Omega)}^{-1}$, $C :=(1-\delta \Vert u_{\mu_1}\Vert_{C^0(\bar \Omega)})^{-1}$ and $\underline u := Cu_{\mu_1} \in  \mathcal P_0 (\Omega)$,  and we have
	
 $$	
 (dd^c\underline u)^n=C^n(dd^c u_{\mu_1})^n=C^n(1-\mu_1u_{\mu_1})^nf^n\omega^n,
 $$
 and
 \begin{equation*}
 C(1-\mu_1u_{\mu_1})-(1-(\delta + \mu_1)Cu_{\mu_1})=C\delta(\Vert u_{\mu_1}\Vert_{C^0(\bar \Omega)}+u_{\mu_1})\geq 0.
 \end{equation*}
	Therefore
	
	\begin{equation}
	\left\lbrace
	\begin{array}{lcr}
	(dd^c \underline u)^n\geq (1-(\delta+ \mu_1)\underline u )^nf^n\omega^n& \textnormal {in} &\Omega,\\
	\underline u=0 & \textnormal{on} & \partial\Omega,
	\end{array}
	\right.
	\end{equation} 
 In other words $\underline u$ is a subsolution to the problem \eqref{eq319}. Again   since $u_0$ is a supersolution,  
 it follows from Theorem \ref{thm:new}  that there exist a solution  $u_\lambda$  of \eqref{eq319} for $\mu_1 \leq 
 \lambda <\mu_1+ \delta$ and this contradicts the definition of $\mu_1.$ 
 Hence , $\Vert u_{\lambda}\Vert_{C^0(\bar \Omega)} \to \infty$ as $\lambda \to \mu_1.$
 
 \smallskip
	
 {\bf Step 5  :}  We show that  the normalized family defined  for $\lambda<\mu_1$ by the following formula :
	$$
	v_\lambda : = \frac{u_\lambda}{ \Vert u_{\lambda}\Vert_{C^0(\bar \Omega)}},
	$$ 	
 converges to a function $\f_1\in PSH(\Omega)\cap C^{1,\bar 1}(\bar \Omega) \cap C^2(\Omega)$ such that $(\mu_1,\varphi_1)$ is a solution to the eigenvalue problem \eqref{eq1}. 
 
 Indeed, for $\lambda <\mu_1$, we have
  $$
 (dd^cv_\lambda)^n=\Vert u_{\lambda}\Vert_{C^0(\bar \Omega)}^{-n}(1-\lambda   
  u_\lambda)^nf^n\omega^n=(\Vert u_{\lambda}\Vert_{C^0(\bar \Omega)}^{-1}-\lambda 
  v_\lambda)^nf^n\omega^n ~\textnormal{in}~ \Omega.
 $$
Hence $v_\lambda$ solves the following problem :
 \begin{equation}\label{eq325}
 \left\lbrace
 \begin{array}{lcr}
 (dd^c v_\lambda)^n=(\Vert u_{\lambda}\Vert_{C^0(\bar \Omega)}^{-1}-\lambda v_\lambda)^nf^n\omega^n&  
 \textnormal {in} &\Omega\\
 v_\lambda =0 & \textnormal{in} & \partial\Omega\\
 \Vert v_\lambda \Vert_{C^0(\bar \Omega)}  = 1.
\end{array}
	\right.
	\end{equation} 
	Using the function $u_0$ defined before, we claim that  there exists $C > 0$ and $0 < \mu_2< \mu_1$
	 such that for $\mu_2 < \lambda < \mu_1$, we have  
	$$
	C u_0 \leq v_\lambda \leq 0~ \textnormal{in}~ \bar \Omega.
	$$
 Indeed by the  fourth step, there exists $0 < \mu_2< \mu_1$ such that  for $\mu_2 < \lambda<\mu_1$ we have 
 $\Vert u_{\lambda}\Vert_{C^0(\bar \Omega)} >  1$. Since $v_\lambda<0$ in $\Omega$ and 
 $\Vert v_{\lambda}\Vert_{C^0(\bar \Omega)}=1$, it follows that  for $\mu_2 < \lambda<\mu_1$,
	$$
	(\Vert u_{\lambda}\Vert_{C^0(\bar \Omega)}^{-1}-\lambda v_\lambda)\leq 1+\mu_1.
	$$
	Choosing $C=1+\mu_1$, we have  for  $\mu_2 < \lambda<\mu_1$,
	
	\begin{equation}\label{eq326}
	\left\lbrace
	\begin{array}{lcr}
	(dd^c v_\lambda)^n\leq C^nf^n\omega^n\leq (dd^c (C u_0))^n& \textnormal {in} &\Omega\\
	v_\lambda = 0 & \textnormal{on} & \partial\Omega.
	\end{array}
	\right.
	\end{equation} 
	By comparison principle, we deduce that $C u_0 \leq v_\lambda<0$ in $\Omega$ for $\mu_2 < \lambda<\mu_1$.
	This proves the claim.
	
 Therefore we have a uniform bound on $\Vert \nabla v_\lambda\Vert_{C^0(\partial \Omega)} $ 
 independent of $\lambda  \in ]\mu_2, \mu_1[$.  
 Since $\Vert v_\lambda\Vert_{C^0(\bar \Omega)} = 1$, it follows  from Corollary    \ref{coro:GradientEstimate2}  that  
 there   exists a constant $C_1 > 0$ such that for any $\lambda \in ]\mu_2, \mu_1[$, 
 $$
 \Vert v_\lambda\Vert_{C^1(\bar \Omega)} \leq C_1
 $$
 By Arzelà-Ascoli theorem, there exists a subsequence $(\lambda_j)$ in $ ]\mu_2, \mu_1[$ such that $\lambda_j \to \mu_1$   and   
  the subsequence $(v_{\lambda_j})$ converges uniformly on $\bar \Omega$ to some $\f_1 \in  PSH (\Omega) \cap C^{0,1}(\bar \Omega)$ which is a weak solution of the following problem :
	
	\begin{equation}\label{eq327}
	\left\lbrace
	\begin{array}{lcr}
	(dd^c \f_1)^n=(-\mu_1 \f_1)^n f^n\omega^n & \textnormal{in} & \Omega\\
	\f_1=0& \textnormal{ on} & \partial \Omega\\
	\Vert \f_1 \Vert_{C^0(\bar \Omega)}=1.& &
	\end{array}
	\right.
	\end{equation}
	
 To prove that $\varphi_1 \in C^{1,\bar1}(\bar \Omega)$, we will apply Proposition \ref{prop:GlobalEstimate}. This requires the existence of a uniform 
  super-barrier for all $v_\lambda$'s. This will  be constructed using $\varphi_1$.
	
 Indeed since $v_{\lambda_j} \to \varphi_1$ uniformly in $\bar \Omega$ as $j \to + \infty$,   it follows that
	$$
	-v_{\lambda_j} \geq -\f_1-\frac{1}{3}
	$$
 pointwise in $\bar \Omega$ for any $j \in \N$ large enough.
 Since $\Vert \f_1 \Vert_{C^0(\bar \Omega)}=1$, there exists $z_0 \in \Omega$ such that $-\f_1(z_0)=1$. 
 Hence, there exists a ball $B=B(z_0,r)\Subset \Omega$ such that $-\f_1 \geq\frac{2}{3} $ in $\bar B$. 
 Therefore
 $$
 -v_{\lambda_j} \geq \frac{1}{3}
 $$
 in $\bar B$ for any $j$ large enough.  It follows that
 $$
 (dd^cv_{\lambda_j})^n\geq 3^{-n}\lambda_j^n f^n \omega^n \geq 3^{-n}\mu_2^n f^n \omega^n
 $$
 in $\bar B$ and  therefore
 $$
 \Delta v_{\lambda_j} \geq \frac{\mu_2}{3}f
 $$
 in $\bar B$.
	
 Now let us solve  the following classical Dirichlet problem for the Laplace operator
 \begin{equation}\label{eq328}
 \left\lbrace
 \begin{array}{lcr}
 \Delta w = \frac{\mu_2}{3}f\theta_B & \textnormal {in} &\Omega\\
 w=0 & \textnormal{on} & \partial\Omega,
 \end{array} \right.
 \end{equation}
 where $0\leq\theta_B\leq 1$ is a smooth function in $\bar \Omega$ with $\text{Supp} (\theta_B)\subset B$  
 and $\theta _B \equiv 1$ in a neighborhood of $z_0$. Then we obtain a smooth subharmonic function $w$  
 such that
 \begin{equation}\label{eq329}
 \left\lbrace
 \begin{array}{lcr}
 \Delta w \leq \Delta v_{\lambda_j} & \textnormal {in} &\Omega\\
 w=v_{\lambda_j} =0 & \textnormal{on} & \partial\Omega.
 \end{array} \right.
 \end{equation}
 Therefore $Cu_0\leq v_{\lambda_j}  \leq w<0$ in $\Omega$ for $j$ large enough. 
 
 By Proposition \ref{prop:GlobalEstimate},  there   
  exists a positive constant $C > 0$ such that for  $j$ large enough, we have
  
  \begin{equation} \label{eq:boundedLaplacian}
  \Vert v_{\lambda_j}  \Vert_{C^{1,\bar 1}(\bar \Omega)}\leq C.
  \end{equation}
  
 Recall that  $v_{\lambda_j}$ is a solution to the following Monge-Amp\`ere equation 
 $$
 (dd^c v_{\lambda_j})^n = g_j, \, \, \text{in} \, \, \Omega, \, \, \, \text{where} \, \, \, g_j :=  (\Vert u_{\lambda_j} \Vert^{-1} - \lambda_j v_{\lambda_j})^n f^n.
 $$
 Observe that the right hand side $g_j $ of this equation  is locally uniformly bounded from below by a positive constant independent of $j$, since $g_j \geq  (- \mu_1 v_{\lambda_j})^n f^n  $ and $v_{\lambda_j} \to \varphi_1 $   uniformly on $\bar \Omega$ and $\varphi_1 <0$ on $\Omega$. 

 Moreover $g_j^{1 \slash n} =  (\Vert u_{\lambda_j}\Vert^{-1} - \lambda_j v_{\lambda_j}) f \in C^{0,1}(\bar \Omega)$ and $\Vert g_j^{1 \slash n} \Vert_{C^{0,1}(\Omega)}$ is uniformly bounded by a constant independent of $j$. 
 
 We can again apply the complex Evans-Krylov local argument  and Schauder theory as in the proof of Theorem \ref{thm:new} (Step 2) to conclude that $\f_1\in PSH(\Omega)\cap C^{\infty}(\Omega) \cap C^{1,\bar 1}(\bar \Omega)$ and the pair $(\mu_1,\f_1)$ is a solution of \eqref{eq1} with $\mu_1\leq \lambda_1$. 
 
 \smallskip
 
  {\bf Step 6 :} We prove that $\mu_1\geq \lambda_1.$
 
 Indeed by Lemma \ref{lem2}, there exists $\bar a \in \mathcal A (\Omega)$ such that 
 $$
 L_{\bar a} \varphi_1 = \Big[\det ((\f_1)_{j\bar k})\Big]^\frac{1}{n}=-\mu_1\f_1f.
 $$
 
  Therefore  $\f_1$ is an eigenfunction of $-L_{\bar a}$ associated to the eigenvalue $\mu_1$ and this implies that $\mu_1\geq \gamma_1(\bar a)\geq \lambda_1.$ Since we already know that $\mu_1 \leq \lambda_1,$ we deduce that 
 $$
 \mu_1=\lambda_1.
 $$
 
 {\bf Step 7  :} The last step is to prove that $(\mu_1,\f_1)$ is unique. Let $(\mu,\f)$ be another solution of \eqref{eq1}. Again by Lemma \ref{lem2},  there exists $\tilde a \in \mathcal{A}(\Omega)$ such that $L_{\tilde a}\f=-\mu \, \f f$, which implies that $\mu \geq \lambda_1.$
 
 We claim that there exists   $t>0$, such that $0\leq t(-\f)\leq -\f_1$ on $\bar \Omega$. Indeed, 
 by Hopf's Lemma (\cite[Lemma 6.4.2, p. 347]{Eva10}) we have $  D_\nu \varphi_1  <  0$ on $\partial \Omega$ ($\nu$ being the inner unit normal vector field on $\partial \Omega$). Hence there exists a constant $c_0 > 0$ such that 
 $$
 \varphi_1 (x) \leq - c_0 d(x,\partial \Omega) \, \, \, \text{on} \, \, \, \Omega.
 $$
 
 On the other hand, let $\rho$ be a smooth defining function for $\Omega$; in particular $\vert \nabla \rho\vert > 0$ on $\partial \Omega$. Since $\varphi = 0$ on $\partial \Omega$, it follows that $\varphi = h \rho$ on a neighborhood of $\partial \Omega$, where $h$ is a smooth function near  $\partial \Omega$. Hence there exists a constant $c_1 >0$ such that $0 \leq -\varphi \leq c_1 (- \rho)$ on $\bar \Omega$.  
 Now it is enough to observe that   $d(x,\partial \Omega)$ is equivalent to $ - \rho (x)$ on $\bar \Omega$. In particular there exist a constant $c'_0 > 0$ such that  $ - \rho (x) \leq c'_0 d(x,\partial \Omega) $ on $\bar \Omega$.
 Summarizing we obtain
 $$
 -\varphi \leq c_1 (- \rho) \leq c_1 c'_0 d(\cdot,\partial \Omega) \leq c_1 c'_0 c_0^{-1} (-\varphi_1),  \, \, \,  \text{on} \, \, \, \Omega,
 $$
 which proves the claim.
 	
 Now let $t_0=\max\{t>0:~ 0\leq t(-\f)\leq -\f_1 ~\textnormal{in}~\bar \Omega \}$. Applying Lemma \ref{lem2} in  
 our case with $(\mu,t_0\f)$ and $(\lambda_1, \f_1)$, we find $a\in \mathcal A(\Omega)$ such that 
 $$	
 L_a(t_0\f-\f_1)+\mu t_0\f f- \lambda_1\f_1f=0.
 $$
 Therefore 
 \begin{align*}
 L_a(t_0\f-\f_1)&= \lambda_1\f_1f - \mu t_0\f f \\&=-\lambda_1f(t_0 \f-\f_1)+(\lambda_1-\mu)ft_0\f.
 \end{align*}
 Since $\lambda_1\leq \mu$ and $\f<0$, we obtain $L_a(t_0\f-\f_1)\geq -\lambda_1f(t_0 \f-\f_1).$ From the  
 fact that $\lambda_1\leq \gamma_1(a)$ and $\phi:=t_0\f-\f_1\geq 0$, we deduce that $L_a(\phi)\geq-\gamma_1(a)\phi f$. Since $\phi$ is bounded and $\phi=0$ in $\partial \Omega$, by Proposition \ref{pro3}, there exists $\theta \in \R $ such that $\phi=\theta    
 \phi_1$ where $\phi_1$ is the positive eigenfunction of the operator $-L_a.$\\
 Thus either $\theta=0$ and we conclude, or $\theta>0$ and using Hopf's Lemma  again (\cite[Lemma 6.4.2, p. 347]{Eva10}), we see that  
 there exists some $\varepsilon>0$ such that $\theta \phi_1 \geq \varepsilon(-\f)$ in $\bar \Omega $, i.e. $t_0\f-\f_1\geq \varepsilon(-\f)$ in $\bar \Omega.$ But, this yields 
	$$
	(-\f_1)\geq (t_0+\varepsilon)(-\f) ~\textnormal{in}~\bar \Omega
	$$
 contradicting the definition of $t_0$. 
 This shows that $\f_1=t_0\f$ and thus $\mu=\lambda_1.$ This proves Theorem \ref{thm1}.
 \end{proof}
	
\smallskip	

Let us  make some interesting remarks which follow from the previous proof.

\begin{rem} 1. The previous proof gives a lower bound for $\lambda_1 = \lambda_1 (f,\Omega).$ Indeed let $u_0 \in PSH (\Omega) \cap C^{\infty} (\bar \Omega)$ be the solution to the equation \eqref{eq319} with $\lambda = 0$ i.e. $(dd^c u_0)^n = f^n \omega^n$ in $\Omega$ and $u_0 = 0$ in $\partial \Omega$. Then by  \eqref{eq:minoration}, we have
$$
\lambda_1 \geq \Vert u_0\Vert^{-1}_{C^0(\bar \Omega)}.
$$
In particular if $\Omega :=  B(a,R) \subset \C^n$ is an euclidean ball of radius $R >0$ then the first eigenvalue 
$\lambda_1 (B(a,R))  = \lambda_1 (B(a,R),1) $ satisfies the following estimate :
\begin{equation} \label{eq:lowerbound}
\lambda_1 (B(a,R)) \geq R^{-2}.
\end{equation}
Indeed the function $u_0 (z) := \vert z-a\vert^2 - R^2$ satisfies the required estimates with $f= 1$ in $B(a,R)$ and 
$\Vert u_0\Vert_{C^0(\bar B(a,R))} = R^2$.

2. Let $F : \Omega' \longrightarrow \Omega$ be a biholmorphic map between bounded strongly pseudoconvex domains
in $\C^n$ which extends smoothly to the boundary.
Then we have 
$$
\lambda_1(\tilde f,\Omega') = \lambda_1(f,\Omega),
$$
where $\tilde f := \vert J_F\vert^{2\slash n} f \circ F $ and  $J_F$ is the Jacobian of $F$ in $\Omega'$. 
Indeed, let $(\lambda_1,\varphi_1)$ the solution to the eigenvalue problem \eqref{eq1} so that 
$(dd^c \varphi_1)^n = (-\lambda_1 \varphi_1)^n f^n \omega^n$ in $\Omega$.
Then 
$$
F^*((dd^c \varphi_1)^n) = (-\lambda_1 \varphi_1\circ F)^n  (f\circ F)^n F^*(\omega^n).
$$
Since $F^*(\omega^n) = \vert J_F\vert^{2} \omega^n$, if we let $\tilde \varphi_1 := \varphi_1\circ F$ and $\tilde f := (f\circ F ) \vert J_F \vert^{2\slash n}$, it follows that
$$
(dd^c \tilde \varphi_1)^n = (- \lambda_1 \tilde \varphi_1)^n \tilde f^n \omega^n,
$$
which proves our claim.
\end{rem}

\subsection{An application}

    We are going to give a further property of the first eigenvalue $\lambda_1 = \lambda_1 (\Omega,1).$ We will consider a smooth function $H \in C^{1,1}(\bar \Omega \times \R)$ satisfying the following condition :
    
    \begin{equation}\label{eq:Quasimonotonie}
    \left\lbrace
    \begin{array}{lcr}
    H(z,t) >  0& \textnormal {for all} &(z,t) \in \bar \Omega \times ]- \infty, 0]\\ \\
   \frac{\partial H}{\partial t} \geq -\lambda_0 & \textnormal{on} & \bar \Omega \times ]- \infty, 0],
    \end{array}
    	\right.
    \end{equation}
    where $\lambda_0 \in  \R$ is a uniform constant. 
    
    Let $\lambda_1$ be the first eigenvalue of the complex Monge-Ampère operator with $f = 1$. Recall that $\lambda_1$ is the supremum of all $\lambda >0$ such that there exists $u_\lambda \in PSH(\Omega) \cap C^2(\bar \Omega)$ satisfying the  equation
    $(dd^c u_\lambda)^n = (1-\lambda u)^n \omega^n$ with $u_\lambda = 0$ in $\partial \Omega$.
    
  We want to solve the following Dirichlet problem.

\begin{equation}\label{eq:GeneralMA}
\left\lbrace
\begin{array}{lcr}
(dd^c u)^n=H^n(z,u) \, \omega^n& \textnormal {in} & \in \Omega \\ \\
u=0 & \textnormal{on} & \partial \Omega,
\end{array}
\right.
\end{equation}
where $u \in PSH(\Omega)$ is the unknown function.

It's well known that  this problem admits a unique  smooth solution if   $\partial_t H \geq 0$ in $\Omega \times ]-\infty , 0]$ by \cite{CKNS85}.

Following Lions \cite{Lions86}, we will prove the following more general result.
\begin{pro}\label{thm:Uniqueness}
 Assume that $H$ satisfies  the assumptions  \eqref{eq:Quasimonotonie} with a constant  $\lambda_0 < \lambda_1$. Then the Dirichlet problem \eqref{eq:GeneralMA} admits a unique solution $u \in PSH(\Omega) \cap C^{\infty} (\bar \Omega).$
\end{pro}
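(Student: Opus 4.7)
The plan is to handle existence and uniqueness separately. Existence will come from building a strictly plurisubharmonic $C^\infty(\bar\Omega)$ subsolution out of the family $(u_\lambda)_{0<\lambda<\lambda_1}$ produced in the proof of Theorem \ref{thm1}, and then invoking Guan's Theorem \ref{thm:GuanB}; uniqueness will follow by linearising the equation via Lemma \ref{lem2} and applying the Berestycki--Nirenberg--Varadhan maximum principle below the first eigenvalue of the resulting linear operator.

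For existence, fix $\lambda\in(\lambda_0,\lambda_1)$. Step 3 of the proof of Theorem \ref{thm1} gives $u_\lambda\in\mathrm{PSH}(\Omega)\cap C^\infty(\bar\Omega)$ with $(dd^c u_\lambda)^n=(1-\lambda u_\lambda)^n\omega^n$ and $u_\lambda=0$ on $\partial\Omega$; since $(1-\lambda u_\lambda)^n\geq 1$ on $\bar\Omega$, $u_\lambda$ is strictly plurisubharmonic up to the boundary. Set $\underline u:=Cu_\lambda$ with $C\geq\Vert H(\cdot,0)\Vert_{C^0(\bar\Omega)}$. Integrating the hypothesis $\partial_t H\geq -\lambda_0$ from $Cu_\lambda\leq 0$ up to $0$ yields
$$H(z,\underline u)\leq H(z,0)+\lambda_0 C(-u_\lambda),$$
so, after taking $n$-th roots, the subsolution inequality $(dd^c\underline u)^n\geq H^n(\cdot,\underline u)\omega^n$ reduces to
$$C+C(\lambda-\lambda_0)(-u_\lambda)\geq H(z,0),$$
which holds because $\lambda>\lambda_0$, $-u_\lambda\geq 0$, and $C\geq H(z,0)$. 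After smoothly extending $H$ to a strictly positive function $\tilde H$ on $\bar\Omega\times\mathbb R$ (values in $\{t>0\}$ are inessential because any PSH solution with zero boundary data is automatically $\leq 0$), Theorem \ref{thm:GuanB} applied with subsolution $\underline u$ provides a solution $u\in\mathrm{PSH}(\Omega)\cap C^\infty(\bar\Omega)$ of \eqref{eq:GeneralMA}.

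For uniqueness, let $u,v$ be two solutions and set $w:=u-v\in C^\infty(\bar\Omega)$. Lemma \ref{lem2} furnishes $a\in\mathcal A(\Omega)$ with
$$L_a w=[\det u_{j\bar k}]^{1/n}-[\det v_{j\bar k}]^{1/n}=H(z,u)-H(z,v)=c(z)\,w,$$
where $c(z):=\int_0^1\partial_t H(z,v+sw)\,ds\geq -\lambda_0$. Consequently $L_a w+\lambda_0 w=(c+\lambda_0)w$ with $c+\lambda_0\geq 0$ on $\Omega$, and $w=0$ on $\partial\Omega$. Were $\{w>0\}$ non-empty, picking a connected component $U$ one would have $w>0$ in $U$, $w=0$ on $\partial U$, and $L_a w+\lambda_0 w\geq 0$ in $U$; since $\lambda_0<\lambda_1\leq\gamma_1(L_a,\Omega)\leq\gamma_1(L_a,U)$ by domain monotonicity of the first Dirichlet eigenvalue, the Berestycki--Nirenberg--Varadhan maximum principle below the first eigenvalue (the companion of Proposition \ref{pro3}(1)) forces $w\leq 0$ in $U$, a contradiction. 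Hence $w\leq 0$; the symmetric argument applied to $v-u$ gives $w\geq 0$, so $u\equiv v$.

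The delicate step is the uniqueness argument: one must invoke the maximum principle on the \emph{a priori} irregular component $U=\{w>0\}$ together with the domain monotonicity of the principal eigenvalue of $L_a$, whose coefficients are merely continuous; both ingredients belong to the Berestycki--Nirenberg--Varadhan theory but must be assembled carefully here. A secondary verification is that the smooth extension $\tilde H$ used to apply Guan's theorem is harmless, which follows because the maximum principle keeps the produced solution in $\{t\leq 0\}$, where $\tilde H\equiv H$.
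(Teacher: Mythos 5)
Your existence argument is essentially identical to the paper's: you take $\underline u := Cu_\lambda$ with $\lambda\in(\lambda_0,\lambda_1)$, $C\geq\|H(\cdot,0)\|_{C^0(\bar\Omega)}$, integrate $\partial_t H\geq-\lambda_0$ to verify the subsolution inequality, and invoke Theorem~\ref{thm:GuanB}. (Your observation about extending $H$ to a positive function on all of $\bar\Omega\times\R$ before applying Guan's theorem is a reasonable precaution that the paper silently elides, and your justification -- any PSH solution with zero boundary data stays $\leq 0$ -- is correct.)

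Your uniqueness argument is correct but takes a genuinely different route through the BNV theory. The paper works directly on $\Omega$: after linearising via Lemma~\ref{lem2} to get $(L_a-c)w=0$ with $c\geq-\lambda_0$, it produces the positive first eigenfunction $\phi_1$ of $-L_a$ and observes that $(L_a-c)\phi_1=-(\gamma_1+c)\phi_1\leq 0$, so that by the BNV criterion (a positive supersolution witnesses positivity of the principal eigenvalue) the operator $L_a-c$ satisfies the maximum principle on $\Omega$, forcing $w=0$ at once. You instead restrict to the \emph{a priori} irregular open set $U=\{w>0\}$, observe $L_a w+\lambda_0 w\geq 0$ there, and invoke domain monotonicity $\gamma_1(L_a,U)\geq\gamma_1(L_a,\Omega)\geq\lambda_1>\lambda_0$ together with the refined maximum principle on the general domain $U$. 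Both arguments legitimately sit inside BNV theory (which indeed requires no boundary regularity of $U$), but the paper's approach is cleaner: by using the barrier $\phi_1$ it avoids any discussion of the unknown set $\{w>0\}$, avoids the need for domain monotonicity, and dispatches $w\geq 0$ and $w\leq 0$ in one stroke rather than by a symmetric repetition. Your route buys nothing extra here, though it would be preferable in a setting where one did not have access to the principal eigenfunction of $-L_a$ on $\Omega$.
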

\begin{proof}
 To show the existence part, we just need to build a subsolution and apply Theorem \ref{thm:GuanB}.
	
 Let $C\geq \Vert H(\cdot,0) \Vert_{C^0(\bar \Omega)}>0$. Since $\lambda_0 < \lambda_1$, we can choose $\lambda \in ]\lambda_0, \lambda_1 [$ and set $v := C u_\lambda$, where $u_\lambda$ is a solution to the problem \eqref{eq319}. Then we have in $\Omega$,
	
 \begin{eqnarray*}
  [det (v_{j \bar k})]^{1 \slash n} & =  & C (1 -\lambda u_\lambda) \\
 &\geq  & \Vert H(\cdot,0) \Vert_{C^0(\Omega)} +   \lambda_0(- C u_\lambda).
  \end{eqnarray*}
	On the other hand for $s < 0$ and $z \in \Omega$, we have
	$$
	H (z,0) - H (z,s) = \int_s^0 \partial_t H (z,t) d t \geq - \lambda_0 (- s) = \lambda_0 s.
	$$
	Applying this inequality for $s =Cu_\lambda (z)$ we obtain for $z \in \Omega$, 
	$$
	C  +  \lambda_0(- C u_\lambda) \geq H (z,0)  +  \lambda_0(-C u_\lambda (z)) \geq H (z,C u_\lambda (z)),
	$$
 
 Hence  $C u_\lambda$ is a subsolution to the problem \eqref{eq:GeneralMA}. 
  By Theorem  \ref{thm:GuanB},  it follows that the problem \eqref{eq:GeneralMA} admits a solution $u \in  
 PSH(\Omega) \cap C^{\infty} (\bar \Omega)$.
	
 \smallskip
	
 Now we will prove the uniqueness. 
 If $u,v$ are two solutions to the problem \eqref{eq:GeneralMA}, by Lemma \ref{lem2}, one may find $a \in \mathcal{A}(\Omega)$ such that 
	
 \begin{equation}\label{eq335}
 \left\lbrace
 \begin{array}{lcr}
 L_a(u-v) = H(\cdot,u)-H(\cdot,v)& \textnormal {in} & \Omega \\
 u-v=0 & \textnormal{in} & \partial \Omega\\
 u-v\in C^{2}(\Omega) \cap C^0(\bar \Omega).& &\\
 \end{array}\right.
 \end{equation}
 We have for $z \in \Omega$,
 $$
 H(z,u(z))-H(z,v(z)) := c (z) (u(z) -v(z)),
 $$
 where 
 $$
 c(z) :=\int_{0}^1\frac{\partial H}{\partial s} \left(z, v (z)+s(u(z)-v(z))\right)ds.
 $$ 
 
  Let $w : =u-v$, then \eqref{eq335} becomes
 \begin{equation}\label{eq336}
 \left\lbrace
 \begin{array}{lcr}
 (L_a-c) w = 0& \textnormal {in} & \Omega \\
 w=0 & \textnormal{on} & \partial \Omega\\
 w\in C^{2}(\Omega) \cap C^0(\bar \Omega).& &\\
 \end{array} \right.
	\end{equation}
	
	Let $\phi_1>0$ be an  eigenfunction of $-L_a$ with eigenvalue $\gamma_1 = \gamma_1(a)$, i.e. $L_a \phi_1=-\gamma_1 \phi_1$ and $\phi_{1_{|_{\partial \Omega}}}=0$. 
	
	Since $ \frac{\partial H}{\partial t}(z,t) \geq -\lambda_0,$ it follows that  $c\geq -\lambda_0>-\gamma_1$. Hence  $\phi_1>0$ in $\Omega$ and
	\begin{align*}
		(L_a-c)\phi_1&= L_a\phi_1-c\phi_1\\&=
		-(\gamma_1+ c)\phi_1\\&\leq 0.
	\end{align*}
	
 Therefore the operator $L_a-c$ satisfies the maximum principle (see \cite{BNV94}, property (ii) page 48). Hence the problem \eqref{eq336} has a unique solution, which implies that $w=0$ in $\Omega$ i.e. $u=v$ in $\Omega$.	
\end{proof}
\section{A variational formula for $\lambda_1$}
 For the proof of Theorem \ref{thm:variational}, we need to develop a variational approach to the eigenvalue problem using  Pluripotential Theory. 

\subsection{The  finite energy class } 
We assume here that $\Omega \Subset \C^n$ is hyperconvex i.e. it admits a continuous negative plurisubharmonic exhausion. Let us define an important convex class of singular plurisubharmonic functions in $\Omega$ suitable for the variational approach.  
As we explained in section 2.1,  the complex Monge-Ampère operator $(dd^c \cdot)^n$ is well defined on the class of locally bounded plurisubharmonic functions in $\Omega$. 

 Following Urban Cegrell \cite{Ceg98}, we define the class $\mathcal{E}^0 (\Omega)$ as the set of bounded plurisubharmonic functions $\phi$ on $\Omega$ with boundary values $0$ such that $\int_\Omega (dd^c \phi)^n < + \infty$. Then we define 
$\mathcal{E}^1 (\Omega)$ as the set of plurisubharmonic functions $u$ in $\Omega$ such that there exists a decreasing sequence $(u_j)_{j \in \N}$ in the class $\mathcal{E}^0 (\Omega)$  satisfying $u = \lim_j u_j $ in $\Omega$ and $\sup_j \int_\Omega (-u_j) (dd^c u_j)^n < + \infty$. 
It is easy to see from the definition that  $\mathcal{E}^1 (\Omega)$ is a convex cone in $L^1_{loc}(\Omega)$.  

It turns out that the complex Monge-Amp\`ere operator extends to the class $\mathcal{E}^1 (\Omega)$ and is continuous under monotone limits in $\mathcal{E}^1 (\Omega)$. Moreover if $u \in \mathcal{E}^1 (\Omega)$, then $\int_\Omega (-u) (dd^c u)^n < + \infty$ (see  \cite{Ceg98}). 
%Observe that if $n = 1$, then $\int_\Omega (-u) (dd^c u)^n  = \int_\Omega d u \wedge d^c u $ This is a non linear analogue of the Sobolev space of plurisubharmonic function.

\subsection{The Monge-Amp\`ere energy functional}
The Monge-Amp\`ere energy functional is defined on the space $\mathcal{E}^1 (\Omega)$  as follows :  for $\phi \in \mathcal{E}^1 (\Omega)$,
\begin{equation} \label{eq:energyF}
E (\phi) :=  \frac{1}{n+1} \int_\Omega (-\phi) (dd^c \phi)^n.
\end{equation}
 It was proved in  \cite{BBGZ13} in the compact setting that $E$ is lower semi-continuous and is $-E$ is a primitive of the complex Monge-Amp\`ere operator (see also \cite{Lu15}, \cite{ACC12} for domains). More precisely, we have.
 
\begin{lem}   \label{Primitive} 
1) For any smooth path $t \longmapsto \phi_t$ in $\mathcal{E}^1 (\Omega)$ defined in some interval $I \subset \R$, we have
\begin{equation}
\frac{d}{d t} E (\phi_t) =  \int_\Omega (-\dot{\phi}_t) (dd^c \phi_t)^n, \, t \in I.
\end{equation}
In particular if $u, v \in \mathcal{E}^1 (\Omega)$ and $u \leq v$, then  $0 \leq E (v) \leq E (u)$. 

Moreover we have 
\begin{eqnarray*}
\frac{d^2}{d t} E (\phi_t) & = & \int_\Omega (- \ddot{\phi}_t) (dd^c \phi_t)^n   \\
& + &  n  \int_\Omega d \, \dot{\phi}_t \wedge d^c \dot{\phi}_t \wedge (dd^c \phi_t)^{n - 1}, \, t \in I
\end{eqnarray*}
In particular, for any  $u, v \in \mathcal{E}^1 (\Omega)$,  the function $t \longmapsto E( (1-t) u + t v)$ is a convex function in $[0,1]$.

2) The functional $E : \mathcal{E}^1 (\Omega) \longrightarrow \R^+$ is lower semi-continuous on $\mathcal{E}^1 (\Omega)$ for the $L^1_{loc}(\Omega)$-topology.

\end{lem}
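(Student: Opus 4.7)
The plan is to establish the first-order variational formula by a direct integration by parts in the smooth case, extend to $\mathcal{E}^1(\Omega)$ by monotone approximation, and then derive monotonicity and convexity from it; the lower semi-continuity I will treat separately by exhibiting $E$ as a supremum of lower semi-continuous truncations.

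For part (1), first reduce to a path $t \mapsto \phi_t$ with each $\phi_t \in PSH(\Omega) \cap C^\infty(\bar\Omega)$ and $\phi_t|_{\partial \Omega}=0$; the general case follows by replacing $\phi_t$ with the decreasing approximants $\phi_t^{(j)} := \max(\phi_t,-j) \in \mathcal{E}^0(\Omega)$ and invoking Cegrell's continuity of the complex Monge-Amp\`ere operator under decreasing sequences in $\mathcal{E}^1$. Differentiating under the integral sign yields
\begin{equation*}
(n+1)\frac{d}{dt}E(\phi_t) = \int_\Omega (-\dot{\phi}_t)(dd^c \phi_t)^n + n \int_\Omega (-\phi_t)\, dd^c\dot{\phi}_t \wedge (dd^c \phi_t)^{n-1}.
\end{equation*}
Stokes' theorem, applicable because $\phi_t$ and $\dot{\phi}_t$ both vanish on $\partial \Omega$, shows that the second integral equals $\int_\Omega (-\dot{\phi}_t)(dd^c\phi_t)^n$, and after collecting the factors $1/(n+1)$ and $n/(n+1)$ the claimed formula drops out. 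Applied to the linear segment $\phi_t := (1-t)u + tv$ with $u \leq v$, one has $\dot\phi_t = v-u \geq 0$, hence $\frac{d}{dt}E(\phi_t) \leq 0$; the monotonicity $E(v) \leq E(u)$ and the trivial lower bound $E(v) \geq 0$ follow immediately.

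Differentiating the first-order formula once more and performing one further integration by parts gives the second-order expression: the Stokes identity
\begin{equation*}
\int_\Omega (-\dot\phi_t)\, dd^c \dot\phi_t \wedge (dd^c\phi_t)^{n-1} = \int_\Omega d \dot\phi_t \wedge d^c\dot\phi_t \wedge (dd^c\phi_t)^{n-1}
\end{equation*}
is again valid due to the boundary vanishing of $\dot\phi_t$. Convexity of $t \mapsto E((1-t)u + tv)$ is then immediate: along a linear path $\ddot\phi_t \equiv 0$, and $i\,\partial(v-u)\wedge\bar\partial(v-u)\wedge (dd^c\phi_t)^{n-1}$ is a positive measure since $v-u$ is real.

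For part (2), the strategy is to write $E$ as an increasing supremum of lower semi-continuous functionals. Using the truncations $\phi^{(j)} := \max(\phi,-j) \in \mathcal{E}^0(\Omega)$, the continuity of $E$ under decreasing limits in $\mathcal{E}^1$ gives $E(\phi) = \sup_j E(\phi^{(j)})$, so it is enough to check that each $\phi \mapsto E(\phi^{(j)})$ is lower semi-continuous on $\mathcal{E}^1(\Omega)$ for the $L^1_{\loc}$-topology. This is the main obstacle: a convergent sequence $\phi^k \to \phi$ in $L^1_{\loc}$ only gives $\phi \geq (\limsup \phi^k)^*$ almost everywhere, so one must combine Hartogs-lemma-type upper regularization with the Bedford-Taylor continuity of $(dd^c\cdot)^n$ along uniformly bounded monotone sequences, together with Xing-type comparison estimates for the energy. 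This line of argument was carried out in the compact Kähler setting in \cite{BBGZ13} and adapted to hyperconvex domains in \cite{ACC12, Lu15}; the same proof applies verbatim here.
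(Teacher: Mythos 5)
The paper does not actually prove Lemma~\ref{Primitive}: it is stated as a recall of results from \cite{BBGZ13} (compact setting) and \cite{ACC12}, \cite{Lu15} (domains), with no proof supplied in this text. So there is no ``paper's argument'' to compare against; what you have written is a genuine attempt at a proof.

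Your formal computation for part (1) is correct: differentiating under the integral sign gives the two-term expression, Cegrell's integration-by-parts identity in $\mathcal{E}^0(\Omega)$ turns the second term into a copy of the first, the factors $1$ and $n$ combine with $1/(n+1)$ to give the stated formula, and the sign analysis for monotonicity and convexity is right (including the identification $d\chi \wedge d^c\chi = i\,\partial\chi\wedge\bar\partial\chi \geq 0$ for real $\chi$). However, the reduction step has a genuine gap. You propose to pass to the general case by replacing $\phi_t$ with the truncations $\phi_t^{(j)} := \max(\phi_t,-j)$, but (a) these are only bounded, not smooth, so they do not land in $PSH(\Omega)\cap C^\infty(\bar\Omega)$ as your reduction requires, and (b) more seriously, $t \mapsto \max(\phi_t,-j)$ is only Lipschitz in $t$, not differentiable, so the derivative formula cannot be applied to the truncated path and then passed to the limit in the way you describe. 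The standard route (followed in \cite{BBGZ13}, \cite{Lu15}) is to first prove a discrete integral identity such as
\begin{equation*}
E(u) - E(v) = \frac{1}{n+1}\sum_{k=0}^{n}\int_\Omega (v-u)\,(dd^c u)^k\wedge(dd^c v)^{n-k}
\end{equation*}
for $u,v\in\mathcal{E}^0(\Omega)$ by repeated integration by parts, extend it to $\mathcal{E}^1(\Omega)$ by monotone approximation (this step does not require any smoothness in an auxiliary parameter), and only then differentiate. Your proposal jumps directly to the infinitesimal form and would need this intermediate step to close. Part (2) you defer to the same references, which is consistent with the paper's own treatment, though the displayed reduction $E(\phi)=\sup_j E(\phi^{(j)})$ is correct and does isolate the real difficulty (l.s.c.\ of $\phi\mapsto\int(-\max(\phi,-j))(dd^c\max(\phi,-j))^n$ along $L^1_{\loc}$-convergent sequences).
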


The following estimates for the capacity of sublevel sets of functions in $\mathcal{E}^1 (\Omega)$ will be useful (see \cite{CKZ05}). There exists a constant $D_0 > 0$ such that for any $\phi \in \mathcal{E}^1 (\Omega)$ and any $s > 0$, we have

\begin{equation} \label{eq:CapSublevel}
\text{Cap}_\Omega (\{\phi \leq - s \}) \leq \frac{D_0}{s^{n+1}} E (\phi).
\end{equation}

\subsection{A non linear Sobolev-Poincar\'e type inequality}

We will need the following result.
\begin{lem}  \label{lem:Sob} Let $ dV_g := g \omega^n$ be a  positive volume form on $\bar \Omega$ with density $ 0 \leq g \in L^p (\Omega)$, $p > 1$. Then   there exists a constant $A = A(n,\Vert g\Vert_p) > 0$ such that for any $\phi \in \mathcal{E}^1 (\Omega)$, we have
\begin{equation}  \label{eq:Sob}
\int_\Omega (-\phi)^{n+1}  d V_g \leq A \,  E (\phi).
\end{equation}
 Moreover for any sequence $(u_j)$ in $\mathcal{E}^1 (\Omega)$ converging to $u$ in $L^1_{loc}(\Omega)$ such that $M := \sup_j E(u_j) < + \infty$, we have  
 \begin{equation} 
\lim_{j \to + \infty} \int_\Omega (-u_j)^{n+1}  d V_g = \int_\Omega (-u)^{n+1}  d V_g.
\end{equation}
\end{lem}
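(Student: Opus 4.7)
The plan is to establish both assertions by combining three ingredients: H\"older's inequality (to exploit $g \in L^p$ with $p > 1$), the volume-capacity comparison $\vol \leq R^{2n}\capac_\Omega$ from \eqref{eq:Volcap}, and the sublevel-set capacity-energy estimate \eqref{eq:CapSublevel}, together with Ko\l odziej's $L^p$-theorem to convert $dV_g$ into a Monge-Amp\`ere measure.

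For the inequality \eqref{eq:Sob}, set $q := p/(p-1)$ conjugate to $p$. For any Borel $E \subset \Omega$, H\"older gives $V_g(E) \leq \Vert g\Vert_p \vol(E)^{1/q}$, and the volume-capacity comparison yields $V_g(E) \leq C_1 \capac_\Omega(E)^{1/q}$ with $C_1 = \Vert g\Vert_p R^{2n/q}$. Applied to the sublevel sets $E_s = \{\phi < -s\}$ for $\phi \in \mathcal E^1(\Omega)$ and combined with \eqref{eq:CapSublevel}, this gives the tail estimate $V_g(E_s) \leq C_2 E(\phi)^{1/q} s^{-(n+1)/q}$. The naive layer-cake identity $\int (-\phi)^{n+1} dV_g = (n+1)\int_0^\infty s^n V_g(E_s) ds$ does not close on its own, because for $q > 1$ the exponent $n - (n+1)/q$ is $> -1$ and the tail integral diverges at infinity at that level of precision. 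The remedy is Ko\l odziej's $L^p$-theorem: since $0 \leq g \in L^p(\Omega)$, there exists a continuous plurisubharmonic function $v_0 \in \mathcal E^0(\Omega)\cap L^\infty(\Omega)$ with $v_0 = 0$ on $\partial\Omega$ and $(dd^c v_0)^n = g\,\omega^n$, with $M_0 := \Vert v_0\Vert_{L^\infty(\Omega)}$ depending only on $n, p, \Vert g\Vert_p, \Omega$. Hence $dV_g = (dd^c v_0)^n$ and the problem reduces to proving the Cegrell-Persson type energy inequality
\[
\int_\Omega (-\phi)^{n+1}(dd^c v_0)^n \leq C(n)\, M_0^n\, E(\phi),
\]
valid for $\phi \in \mathcal E^1(\Omega)$ and bounded $v_0 \in \mathcal E^0(\Omega)$. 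This last inequality is first established for bounded $\phi \in \mathcal E^0(\Omega)$ by iterated integration by parts: writing $dd^c[(-\phi)^{n+1}] = -(n+1)(-\phi)^n dd^c\phi + n(n+1)(-\phi)^{n-1} d\phi \wedge d^c\phi$ and discarding the non-negative gradient term leaves $-v_0 \leq M_0$ as an outer factor, and a downward recursion on the power of $(-\phi)$ delivers the bound. The extension to unbounded $\phi \in \mathcal E^1(\Omega)$ is by monotone convergence with $\phi_j := \max(\phi,-j) \nearrow \phi$, the monotone continuity of the complex Monge-Amp\`ere operator on $\mathcal E^1$ recalled in Section~2, and the monotone-convergence theorem applied to $(-\phi_j)^{n+1} \nearrow (-\phi)^{n+1}$.

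For the convergence statement, extract a subsequence along which $u_j \to u$ almost everywhere on $\Omega$; standard Cegrell theory together with $M = \sup_j E(u_j) < \infty$ ensures $u \in \mathcal E^1(\Omega)$ and Lemma~\ref{Primitive} controls $E(u)$. Uniform $dV_g$-integrability of $((-u_j)^{n+1})_j$ follows from the tail estimate of step one: for any $t > 0$, layer-cake applied beyond level $t$ together with $V_g(\{u_j < -s\}) \leq C_2 M^{1/q} s^{-(n+1)/q}$ shows that $\int_{\{-u_j > t\}}(-u_j)^{n+1} dV_g$ is bounded by a quantity depending only on $t, M, \Vert g\Vert_p, n, \Omega$ and tending to $0$ as $t \to \infty$, uniformly in $j$. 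Vitali's convergence theorem then delivers the claimed limit.

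The main obstacle is the Cegrell-Persson inequality displayed above: both the iterated integration-by-parts identity for bounded $\phi$ (carefully handling the gradient term $n(n+1)(-\phi)^{n-1} d\phi \wedge d^c\phi$ and the boundary contribution, which vanishes because $\phi$ and $v_0$ vanish on $\partial\Omega$ in the appropriate $\mathcal E^0$-sense), and its monotone extension to $\mathcal E^1$, are the technically delicate steps. The rest of the argument is a combination of standard H\"older and capacity tools with the invocation of Ko\l odziej's $L^p$-theorem.
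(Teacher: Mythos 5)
Your argument for the inequality \eqref{eq:Sob} is essentially the paper's: Ko\l odziej's theorem converts $dV_g$ into $(dd^c v_0)^n$ with $v_0$ bounded, and the energy inequality $\int_\Omega(-\phi)^{n+1}(dd^c v_0)^n\leq C(n)\Vert v_0\Vert_\infty^n E(\phi)$ then does the work; the paper simply cites this as Blocki's estimate \eqref{eq:Blocki} while you reprove it by iterated integration by parts. That part is sound.

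The second part has a genuine gap, and you already laid the trap for yourself. In step one you correctly observe that the naive layer-cake with the capacity tail bound
\[
V_g\bigl(\{\phi<-s\}\bigr)\leq C_2\,E(\phi)^{1/q}\,s^{-(n+1)/q},\qquad q=\tfrac{p}{p-1}>1,
\]
gives an integrand $s^{\,n-(n+1)/q}$ whose exponent exceeds $-1$, so the integral $\int_t^\infty s^n V_g(\{\phi<-s\})\,ds$ diverges. Yet in the second part you invoke \emph{exactly this} tail bound, with $M$ in place of $E(\phi)$, and claim that layer-cake beyond level $t$ forces $\int_{\{-u_j>t\}}(-u_j)^{n+1}dV_g\to 0$ uniformly in $j$. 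The same divergence occurs: the boundary term $t^{n+1}V_g(\{-u_j>t\})\lesssim t^{(n+1)/p}$ even grows, and the tail integral does not converge. The capacity/H\"older ingredients alone cannot deliver uniform $dV_g$-integrability of $((-u_j)^{n+1})_j$.

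The repair is to re-enter the Ko\l odziej--Blocki machinery, now with a \emph{localized} density: for a Borel set $B\subset\Omega$, let $\phi_B\in PSH(\Omega)\cap C^0(\bar\Omega)$ solve $(dd^c\phi_B)^n={\bf 1}_B\,g\,\omega^n$ with $\phi_B=0$ on $\partial\Omega$, so that Ko\l odziej's estimate gives $\Vert\phi_B\Vert_\infty^n\leq C_0\Vert{\bf 1}_B g\Vert_{L^p}$. Blocki's inequality then bounds $\int_B(-u_j)^{n+1}dV_g$ by $(n+1)!\,C_0\,M\,\Vert{\bf 1}_B g\Vert_{L^p}$, and this tends to $0$ \emph{uniformly in $j$} once $\mathrm{Vol}(B)\to0$, by absolute continuity of $\int_B g^p\,dV$. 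The capacity and volume--capacity estimates are only needed to show $\mathrm{Vol}(\{u_j\leq -k\})\leq D_0 R^{2n}M\,k^{-(n+1)}\to0$ uniformly in $j$. Plugging $B=\{u_j\leq -k\}$ yields the uniform integrability, after which your Vitali (or, as in the paper, a truncation-plus-dominated-convergence) argument closes the proof.
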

Observe that for $n=1$ , $E (\phi) = (1\slash 2) \int_\Omega (-\phi) dd^c \phi =  (1\slash 2)  \Vert \nabla \phi \Vert^2_{L^1(\Omega)}$. Hence the inequality \eqref{eq:Sob} in this case is the Poincaré inequality for functions in $\mathcal E^1 (\Omega) \subset W_0^{1,2} (\Omega)$.

\begin{proof} To prove the inequality \eqref{eq:Sob}, we will need the following estimate due to Z. Blocki.
For any $u, v \in \mathcal E^0 (\Omega)$ we have
\begin{equation} \label{eq:Blocki}
\int_\Omega (-u)^{n+1} (dd^c v)^n \leq  \fact{(n+1)}  \, \Vert v\Vert^n_{L^\infty (\Omega)} \int_\Omega (-u) (dd^c u)^n,
\end{equation} 
This can be proved using integration by parts $n$ times  (see \cite{Bl93}).

 By a well know result of Kołodziej \cite{Kol03}, there exists $\phi_0 \in PSH (\Omega) \cap C^0(\bar \Omega)$ such that $\phi_0 = 0$ in $\partial \Omega$ and $(dd^c \phi_0)^n = g d V$ in the sense of currents on $\Omega$.
 
To prove the estimate \eqref{eq:Sob}, it is enough to assume that $\phi \in \mathcal E^0 (\Omega)$.
Then  since we have
 $$
 \int_\Omega (-\phi)^{n+1}  d V_g  = \int_\Omega (-\phi)^{n+1}  (dd^c \phi_0)^n,
 $$
it follows from \eqref{eq:Blocki} that
 $$
 \int_\Omega (-\phi)^{n+1}  d V_g  \leq  \fact{(n+1)}  \,  \Vert \phi_0\Vert^n_{L^\infty (\Omega)} \int_\Omega (-\phi) (dd^c \phi)^n,
 $$
 which proves the required estimate with $A :=  (n+1) \fact{(n+1)}  \, \Vert \phi_0\Vert^n_{L^\infty (\Omega)}$.
 
 Let us prove the second property.
Taking a subsequence if necessary,  we can assume that $u_j \to u$ a.e. in $\Omega$.

  Assume first that $(u_j)$ is uniformly bounded in $\Omega$. Then since the sequence $(-u_j)^{n+1}_{j \in \N}$ converges to $(-u)^{n+1}$ a.e. in $\Omega$ it follows  from the Lebesgue convergence theorem  that  
  \begin{equation} \label{eq:Formula1}
 \lim_{j \to + \infty} \int_\Omega (- u_j)^{n+1}  g d V  = \int_\Omega  (-u)^{n+1} g d V. 
\end{equation}
  
    We now consider the general case.  For  fixed $k, j  \in \N$, 
    where $u^{(k)} := \sup \{u,-k\}$ and $ u_j^{(k)} :=  \sup \{u_j ,  - k\}$.  
    
    Set for $j, k \in \N$, $h_j := (-u_j)^{n+1}$,  $h_j^{(k)} = (-u_j^{(k)})^{n+1}$, $h := (-u)^{n+1}$ and $h^{(k)} :=(- u^{(k)})^{n+1} $. These are Borel  functions in  $L^1(\Omega,dV_g)$ and  we have the following obvious inequalities : 
\begin{eqnarray} \label{eq:FundEq3}
\left\vert \int_\Omega (h_j - h)  d V_g \right\vert &\leq  & \int_\Omega (h_j^{(k)} -h_j ) d V_g + \left\vert \int_\Omega  h_j^{(k)} - h^{(k)}
d V_g\right\vert \\
& + &    \int_\Omega (h^{(k)} - h)  d V_g. \nonumber
\end{eqnarray}

For fixed $k$, the sequence  $(u_j^{(k)})_{j \in \N}$ is a  uniformly bounded sequence of plurisubharmonic functions in $\Omega$.  Then applying the first step, we see that for each $k \in \N$, the second term in (\ref{eq:FundEq3}) converges to $0$ as $j \to + \infty$, while the third term converges to $0$ by the monotone convergence theorem.  It remain to show that the first term converges to $0$ as $k \to + \infty$, uniformly in $j$.
Indeed,  for $j, k \in \N^*$ we have the following obvious estimates 
  \begin{equation} \label{eq:1}
  \int_\Omega\vert h_j - h_j^{(k)} \vert d V_g \leq 2 \int_{\{h_j \geq k^{n+1}\}} h_j d V_g. 
   \end{equation}
   We claim that  the sequence $k \longmapsto \int_{\{h_j \geq k^{n+1}\}} h_j d V_g$ converges to $0$ uniformly in $j$ as $k \to + \infty$.
   Indeed  for fixed $j, k$, we have
  \begin{equation} \label{eq:2}
   \int_{\{h_j \geq k^{n+1}\}} h_j d V_g =  \int_{\{u_j \leq - k\}} (-u_j)^{n+1} g d V
   \end{equation}
   
   On the other hand, given a Borel subset  $B \subset \Omega$, by Kolodziej's theorem \cite{Kol03} there exists $\phi_B \in PSH (\Omega) \cap C^0(\bar \Omega)$ such that $\phi_B =0$ in $\partial \Omega$ and $(dd^c \phi_B)^n = {\bf 1}_B \, g d V$ in the sense of currents on $\Omega$.
    Moreover there exists a uniform contant $C_0 > 0$ such that $ \Vert \phi_B \Vert^n_{L^{\infty} (\Omega)} \leq C_0 \Vert  {\bf 1}_B g \Vert_{L^{p} (\Omega)}$.
   
   Therefore as before,  Blocki's inequality \eqref{eq:Blocki} yields 
   \begin{eqnarray*}
   \int_B (- u_j)^{n+1} g d V = \int_\Omega (- u_j)^{n+1} (dd^c \phi_B)^n 
   & \leq& (n +1)! \, \Vert \phi_B \Vert^n_{L^{\infty} (\Omega)}  \int_\Omega (-u_j) (dd^c u_j)^n   \\
   & \leq & (n+1)! \, C_0 M \Vert  {\bf 1}_B g \Vert_{L^{p} (\Omega)}.
   \end{eqnarray*}
  
  Now since $g^p \in L^{1}(\Omega)$, by absolute continuity, it follows that $\Vert {\bf 1}_B g \Vert^p_{L^{p} (\Omega)} = \int_B g^p d V \to 0$ as $\text{Vol} (B) \to 0$. 
  
  This implies that $\sup_{j \in \N} \int_B (-u_j)^{n+1} g d V \to 0$ as $\text{Vol} (B) \to 0$.
   We want to apply this result to the Borel sets $B :=  \{u_j \leq - k\} $. To estimate their volumes, we first observe that their Monge-Amp\`ere capacity can be controlled using the inequalities (\ref{eq:CapSublevel}) i.e.   for any $j, k \in \N$, we have
  
  $$
  \text{Cap}_\Omega ( \{u_j \leq - k\} ) \leq \frac{D_0}{k^{n+1}} E(u_j) \leq \frac{D_0 M}{k^{n+1}}. 
  $$
  Using  the inequality \eqref{eq:Volcap}, we conclude that for any $k \in \N^*$,
  $$
  \sup_{j \in \N}  \text{Vol} ( \{u_j \leq - k\}) \leq \frac{M D_0 R^{2 n} }{k^{n+1} } \to 0, \, \text{as} \, k \to + \infty.
  $$
  This proves the claim and completes the proof of  the Lemma.
 \end{proof}
 
  \subsection{A Rayleigh quotient type formula}  
 We first use a variational approach to prove the following result of independent interest.
 
 \begin{theorem} \label{thmVariational} Let $\Omega \Subset \C^n$ be a hyperconvex domain and $ dV_g := g \omega^n$ be a  positive volume form on $\bar \Omega$ with density $ 0 \leq g \in L^p (\Omega)$, $p > 1$ such that $\int_\Omega d V_g > 0$. We define the real number
 \begin{equation} \label{eqR}
 \eta_1^n := \inf \left\{\frac{E(\phi)}{I_g (\phi)} ; \phi \in \mathcal E^1(\Omega), w \neq 0 \right\},
 \end{equation} 
 where $I_g(\phi) := \frac{1}{n+1} \int_\Omega (-\phi)^{n +1} d V_g$.

 Then there exists  a function $w \in \mathcal E^1 (\Omega)$  such that 
 \begin{equation} \label{eqR}
 \eta_1^n = \frac{E(w)}{I_g (w)}\cdot
 \end{equation} 
 
  Moreover $(\eta_1,w)$ is a (weak) solution to the eigenvalue problem 
 \begin{equation}\label{eq:WeakEVP}
\left\lbrace
\begin{array}{lcr}
(dd^c w)^n=(-\eta_1 w)^n g \omega^n & \textnormal{on} & \Omega\\
w=0& \textnormal{ in} & \partial \Omega\\
w < 0. & &
\end{array}
\right.
\end{equation}
 
 \end{theorem}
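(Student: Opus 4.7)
The plan is to apply the direct method of the calculus of variations to the scale-invariant functional $F(\phi) := E(\phi)/I_g(\phi)$ on the convex cone $\mathcal{E}^1(\Omega) \setminus \{0\}$. First I would establish the two-sided bound $0 < \eta_1^n < +\infty$: finiteness comes from testing against any bounded $\phi_0 \in \mathcal{E}^0(\Omega)$ with $\phi_0 < 0$ (which exists since $\Omega$ is hyperconvex), and positivity is exactly the Sobolev--Poincar\'e inequality \eqref{eq:Sob} from Lemma \ref{lem:Sob}, which rearranges to $F(\phi) \geq (n+1)/A > 0$ for every nonzero $\phi \in \mathcal{E}^1(\Omega)$.

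Next I would produce a minimizer. Using the scaling invariance $F(\lambda \phi) = F(\phi)$, $\lambda > 0$, I would normalize a minimizing sequence $(\phi_j)$ so that $I_g(\phi_j) = 1$, whence $E(\phi_j) \to \eta_1^n$ and in particular $\sup_j E(\phi_j) < +\infty$. Standard pluripotential compactness --- families of uniformly bounded Monge--Amp\`ere energy in $\mathcal{E}^1(\Omega)$ are relatively compact in $L^1_{\loc}(\Omega)$, cf.\ \cite{Ceg98, BBGZ13} --- then yields a subsequence $\phi_{j} \to w$ in $L^1_{\loc}(\Omega)$ with $w \in \mathcal{E}^1(\Omega)$. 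The lower semi-continuity of $E$ from Lemma \ref{Primitive} gives $E(w) \leq \eta_1^n$, while the continuity statement of Lemma \ref{lem:Sob} along bounded-energy sequences gives $I_g(w) = 1 > 0$. Hence $F(w) \leq \eta_1^n$, and the definition of the infimum forces equality, so $w$ is a minimizer.

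To derive the eigenvalue equation I would test against the affine segment $\phi_t := (1-t) w + t v$, $t \in [0,1]$, for $v \in \mathcal{E}^0(\Omega)$. Writing $\Lambda(\phi) := E(\phi) - \eta_1^n I_g(\phi)$, we have $\Lambda \geq 0$ on $\mathcal{E}^1(\Omega)$ and $\Lambda(w) = 0$, so differentiating via Lemma \ref{Primitive} produces the one-sided inequality
\[
 \int_\Omega (w - v) \bigl[ (dd^c w)^n - \eta_1^n (-w)^n \, dV_g \bigr] \geq 0.
\]
Setting $d\mu := (dd^c w)^n - \eta_1^n(-w)^n \, dV_g$, the choice $v = 0$ gives $\int_\Omega w \, d\mu = 0$ automatically from $E(w) = \eta_1^n I_g(w)$. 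Replacing $v$ by $\lambda v$ and letting $\lambda \to +\infty$ then forces $\int_\Omega v \, d\mu \leq 0$ for every $v \in \mathcal{E}^0(\Omega)$; the additive perturbation $\phi_t = w + t u$, $u \in \mathcal{E}^0(\Omega)$, which also lies in $\mathcal{E}^1(\Omega)$ for $t \geq 0$, yields the same one-sided bound by a parallel computation. A duality argument in the spirit of \cite{BBGZ13} --- using that both $(dd^c w)^n$ and $(-w)^n\, dV_g$ are Borel measures of finite total mass putting no mass on pluripolar sets --- then promotes this to the identity $\mu = 0$, i.e.\ $(dd^c w)^n = \eta_1^n (-w)^n g \, \omega^n$ in the pluripotential sense.

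Finally, $w = 0$ on $\partial \Omega$ in the Cegrell sense is built into the membership $w \in \mathcal{E}^1(\Omega)$, and strict negativity $w < 0$ in $\Omega$ follows from the maximum principle: if $w(z_0) = 0$ at some interior $z_0$, plurisubharmonicity together with $w \leq 0$ would force $w \equiv 0$, contradicting $I_g(w) = 1$. I expect the main obstacle to be the promotion of the one-sided variational inequality to the equality of measures in the third step, since admissible perturbations stay in the convex cone $\mathcal{E}^1(\Omega)$ only on one side; the scaling trick $v \mapsto \lambda v$ and the additive perturbation $w + tu$ give complementary one-sided bounds, but closing the loop genuinely relies on the pluripotential duality between the cone $\mathcal{E}^0(\Omega)$ and positive Borel measures of finite energy.
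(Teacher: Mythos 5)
Your Steps 1--3 (two-sided bounds via Lemma \ref{lem:Sob}, normalization $I_g(\phi_j)=1$, compactness in $L^1_{\loc}$, lower semi-continuity of $E$, and continuity of $I_g$ along bounded-energy sequences) coincide with the paper's argument. The gap is in the derivation of the Euler--Lagrange equation. The two perturbations you propose are \emph{not} complementary: for the affine segment $\phi_t=(1-t)w+tv$, Lemma \ref{Primitive} gives $\Lambda'(0^+)=\int(w-v)\,d\mu\geq0$, and since $\int w\,d\mu=0$ (by the extremality $E(w)=\eta_1^n I_g(w)$), this yields $\int v\,d\mu\leq0$; for the additive path $w+tu$ with $t\geq0$, one computes $\Lambda'(0^+)=-\int u\,d\mu\geq0$, i.e.\ again $\int u\,d\mu\leq0$. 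Both inequalities point the same way, and scaling $v\mapsto\lambda v$ does nothing since it preserves the sign. So you have $\int v\,d\mu\leq0$ for all $v\in\mathcal E^0(\Omega)$ --- but $v$ runs only over \emph{negative} psh functions, not arbitrary test functions, and $\mu$ is a difference of positive measures, so this does not force $\mu=0$ (nor even $\mu\geq0$ as a Borel measure). The vague ``duality argument'' you invoke to close this loop is exactly the missing content, and I do not see how to supply it from the one-sided inequality alone.

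What the paper does instead, and what your proposal lacks, is the psh-projection device of \cite{BBGZ13}: for $\psi\in\mathcal E^0(\Omega)$, the path $\phi_t=w+t\psi$ is psh for $t\geq0$ but not for $t<0$; one replaces $\phi_t$ by its psh envelope $P(\phi_t)$ and considers $h(t):=E\circ P(\phi_t)-\eta_1^n I_g(\phi_t)$. Because $P(\phi_t)\leq\phi_t$ makes $I_g(\phi_t)\leq I_g(P(\phi_t))$, one still has $h\geq0$ on all of $[-1,1]$ with $h(0)=0$. The decisive input is the differentiability of $E\circ P$ along arbitrary $L^1_{\loc}$-curves together with the orthogonality relation
$$
\frac{d}{dt}(E\circ P)(\varphi_t)=\int_\Omega(-\dot\varphi_t)\,(dd^cP(\varphi_t))^n,
$$
which is a nontrivial theorem of \cite{BBGZ13,Lu15}. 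This lets you differentiate at an \emph{interior} minimum $t=0$ and get the two-sided conclusion $h'(0)=0$, i.e.\ $\int\psi\,d\mu=0$ for every $\psi\in\mathcal E^0(\Omega)$; since every smooth test function is a difference of two $\mathcal E^0$-functions, this gives $\mu=0$. Without this envelope argument --- or an explicit substitute for it --- your proof does not close.
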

 \begin{proof} By assumption, $\Omega$ admits a continuous negative plurisubharmonic exhaution $\rho$. Then $\rho \in \mathcal E^1(\Omega)$ and any  $w \in \mathcal E^1(\Omega)$ such that $w <  0$  satisfies  $\int_\Omega (-w)^{n +1} d V_g > 0$. Indeed, since $\int_\Omega d V_g > 0$ there exists a compact set $K \Subset \Omega$ such that $ V_g (K) := \int_K d V_g >0$. Therefore   $\int_\Omega (-w)^{n +1} d V_g  \geq  (- \max_K w)^{n +1} V_g (K) > 0$. Hence  $\eta_1$ is a  well defined non negative real number and by homogeneity, we have
\begin{equation} \label{eq:VariatFormula2}
 \eta_1^{n} =   \inf \{ E (w) \,  ; \,  w \in \mathcal E^1 (\Omega), \,   I_g  (w) = 1\}.
 \end{equation} 
 
 Moreover, by  Lemma \ref{lem:Sob}, there exists a constant $A > 0$ such that for any $w \in \mathcal E^1 (\Omega)$
 $$
 \int_\Omega (-w)^{n +1} d V_g \leq A \, E(w).
 $$
 
 In particular  we  conclude that $\eta_1^n \geq A^{-1} >0$.
 
On the other hand, by definition there exists a minimizing sequence $(w_j)_{j \in \N}$ in $\mathcal E^1 (\Omega)$ such that $  I_{g}  (w_j) = 1$ for any $j \in \N$ and
$$
\lim_{j \to + \infty} E (w_j) = \eta_1^n.
$$

By construction it follows that the sequence $(w_j)_{j \in \N}$ is bounded in $L^{n+1} (\Omega,dV_g)$. Extracting a subsequence if necessary we can assume that $(w_j)$ converges weakly to $w \in PSH (\Omega)$ and a.e. in $\Omega$, hence  in $L_{loc}^{1} (\Omega)$.  By semi-continuity of the energy functional, it follows that $w \in \mathcal E^1 (\Omega)$ and $E (w) \leq \lim_{j \to + \infty} E (w_j) = \eta_1^n$.

Since $\sup_j E (w_j) =: C < + \infty$, it follows from  Lemma \ref{lem:Sob} that  
\begin{equation} \label{eq:FundEq1}
\lim_{j \to + \infty} \int_\Omega(-w_j)^{n+1} d V_g =  \int_\Omega(-w)^{n+1} d V_g.
\end{equation}
Hence  $I_{g} (w) = 1$ and 
$w\in \mathcal E^1 (\Omega)$ is an extremal function  for the eigenvalue problem i.e.
 $$
 \eta_1^n = \frac{E (w)}{I_{g} (w)}\cdot 
 $$
 To prove that $(\eta_1,w)$ is a solution to the eigenvalue problem, consider the following functional defined for $\phi \in \mathcal E^1 (\Omega)$, by the formula 
 $$
 F_{g} (\phi) := E (\phi) - \eta_1^n I_{g} (\phi), 
 $$
 and observe that when $\phi$ is smooth then 
 $$
 F_{g}' (\phi) = - (dd^c \phi)^n  +  \eta_1^n (-\phi)^n  dV_g,
 $$
 This means that the eigenvalue equation is the Euler-Lagrange equation of the functional $F_g$ on $\mathcal E^1(\Omega)$.
 
 As observed before, for any $\phi \in \mathcal E^1 (\Omega)$ with $\phi \not \equiv 0$, we have $ I_{g} (\phi) >  0$ and then
 $$
  F_{g} (\phi) :=  I_{g} (\phi) \left(\frac{E (\phi)}{I_{g} (\phi)}  - \eta_1^n\right) \geq 0, 
 $$
 by definition of   $\eta_1. $ Since  $F_g (w) = 0$,
 this means that the functional $F_{g} $ achieves its minimum on  $\mathcal E^1 (\Omega)$ at $w$.
 Therefore $w$ is a kind of "critical point" of the functional $ F_{g}$. To prove this claim, we will use a tricky argument which goes back to \cite{BBGZ13}. 
 
 Fix a "test function" $\psi \in \mathcal{E}^0(\Omega)$ and consider  the path 
 $\phi_t = w + t \psi$ which belongs to $\mathcal E^1 (\Omega)$ when $0 \leq t \leq 1$ by convexity. When $t < 0$,  this is no longer the case, and so we consider its plurisubharmonic envelope $\tilde \phi_t := P (\phi_t)$ i.e. the largest plurisubharmonic function below $\phi_t$ in $\Omega$. Then since $ w \leq \phi_t $ when $t < 0$, it follows that $w \leq P(\phi_t)$ when $t < 0$ hence     $\tilde \phi_t \in \mathcal{E}^1 (\Omega)$ for any $t \in [-1,+1]$ (see \cite{Ceg98}). 
 
 Now consider the one variable function defined for $t \in [-1,+1]$ by 
 $$
 h(t) :=  E \circ P (\phi_t ) - \eta_1^n I_{g} (\phi_t).
 $$
 We claim that the function $h$ is differentiable $ [-1,+1]$, non negative and attains its minimum at $t = 0$. Indeed observe first that $h(0) =0$.
 Moreover since for any $t \in [-1,1]$,  $ \tilde \phi_t  \leq \phi_t < 0$ in $\Omega$, it follows that $  I_{g} (\phi_t) \leq  I_{g} (\tilde \phi_t)$ and then
 $$
 h (t) \geq E ( \tilde \phi_t)  - \eta_1^n I_{g} (\tilde \phi_t) = F_{g} (\tilde \phi_t) \geq 0,
 $$
 for any $t \in [-1,1]$, which proves our claim.
 An important property of the operator $P$ is that for any smooth curve $t \longmapsto \varphi_t$ in $L^1_{loc}(\Omega)$, we have :
 $$
 \frac{d}{d t}(E \circ P) (\varphi_t) =  \int_\Omega (-\dot{\varphi_t}) (dd^c P (\varphi_t))^n,
 $$ 
 for any $t$ (see \cite{BBGZ13}, \cite{Lu15}).
  
 Therefore $h$ is differentiable in $[-1,+1]$ and  by Lemma \ref{Primitive},  we have for any $t \in [-1,1], $
 \begin{eqnarray*}
 h'(t) & = & \frac{d}{d t} E (\tilde \phi_t) - \eta_1^n  \frac{d}{d t}  I_{g} ( \phi_t) \\
 & = & \int_\Omega (- \dot  \phi_t) (dd^c \tilde \phi_t)^n   +  \eta_1^n  \int_\Omega  \dot \phi_t (-\phi_t)^n d V_g. 
 \end{eqnarray*}
 
 Since $h$ achieves its minimum at $0$, it follows that $h'(0) = 0$, which implies the following identity: 
 $$
 \int_\Omega \psi \, (dd^c w)^n  =  \eta_1^n  \int_\Omega  \psi \, (-w)^n d V_g. 
 $$
for any  $\psi \in \mathcal E^0(\Omega) $.  Since any smooth test function $\chi$ in $\Omega$ can be written as $\chi = \psi_1 - \psi_2$, where $\psi_1, \psi_2 \in \mathcal E^0 (\Omega) $, it follows that 
 $$
 (dd^c w)^n  = \eta_1^n  (-w)^n d V_g. 
 $$
in the  sense of currents on $\Omega$.
\end{proof}

\subsection{Proof of Theorem \ref{thm:variational}}

It is an imedaite consequence of the following  result.

 \begin{theorem} \label{thm:Rayleigh} Let $ d \nu_{f} := f^n \omega^n$ be a smooth positive volume form on $\bar \Omega$ and $(\lambda_1,\varphi_1)$ the smooth normalized solution of the eigenvalue problem \eqref{eq1} and let $(\eta_1,w_1)$ be a weak solution to the problem \eqref{eq:WeakEVP} for $g = f^n$. 
 
 Then $\lambda_1 = \eta_1 $ and $\varphi_1 = \theta w_1$ for some constant
 $\theta >0$. In particular
 \begin{equation} \label{eq:Rquotient}
 \lambda_1^n = \frac{\int_\Omega (-\varphi_1) (dd^c \varphi_1)^n}{\int_\Omega (-\varphi_1)^{n +1} d \nu_{f}} = \inf \left\{\frac{\int_\Omega (-\phi) (dd^c \phi)^n}{\int_\Omega (-\phi)^{n +1} d \nu_{f}} ; \phi \in \mathcal E^1(\Omega), \phi \neq 0 \right\}\cdot
 \end{equation} 
 \end{theorem}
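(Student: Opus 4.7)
The strategy is to prove the two inequalities $\eta_1 \leq \lambda_1$ and $\eta_1 \geq \lambda_1$ separately, and then apply the uniqueness part of Theorem \ref{thm1} to identify $w_1$ with a positive multiple of $\varphi_1$.

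For the easy direction $\eta_1 \leq \lambda_1$, the plan is to use $\varphi_1$ itself as a test function in the variational principle of Theorem \ref{thmVariational}. The smooth normalized eigenfunction $\varphi_1$ belongs to $\mathcal{E}^0(\Omega)\subset\mathcal{E}^1(\Omega)$ since it is bounded, plurisubharmonic, and vanishes on $\partial\Omega$. Integrating the eigenvalue equation $(dd^c\varphi_1)^n=(-\lambda_1\varphi_1)^n f^n\omega^n$ against $(-\varphi_1)$ immediately yields $E(\varphi_1)=\lambda_1^n I_{f^n}(\varphi_1)$, so the Rayleigh quotient at $\varphi_1$ equals $\lambda_1^n$; taking the infimum over $\mathcal{E}^1(\Omega)\setminus\{0\}$ gives $\eta_1^n\leq\lambda_1^n$.

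For the converse direction, let $w_1\in\mathcal{E}^1(\Omega)$ be the weak extremal eigenfunction produced by Theorem \ref{thmVariational}. The plan is to upgrade $w_1$ to a classical solution of \eqref{eq1} so that the uniqueness in Theorem \ref{thm1} can be invoked. First, I would establish that $w_1\in L^\infty(\Omega)$: the Monge-Amp\`ere measure of $w_1$ has density $\eta_1^n(-w_1)^n f^n$, which lies in $L^p(\Omega)$ for some $p>1$ by the integrability provided by Lemma \ref{lem:Sob} combined with smoothness of $f$; Ko{\l}odziej's $L^\infty$ estimates then give $w_1\in C^0(\bar\Omega)$ with $w_1<0$ in $\Omega$ and $w_1=0$ on $\partial\Omega$. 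Second, I would run the Evans-Krylov and Schauder bootstrap exactly as in the final part of the proof of Theorem \ref{thm:new}, using that the right-hand side $\eta_1^n(-w_1)^n f^n$ is then continuous on $\bar\Omega$, strictly positive on every compact subset of $\Omega$, and H\"older regular once $w_1$ is. Coupled with the a priori $C^{1,\bar 1}$ estimates of Proposition \ref{prop:GlobalEstimate} (taking a scalar multiple of a defining function as strict subsolution and $w_1$ itself as supersolution), this yields $w_1\in PSH(\Omega)\cap C^\infty(\Omega)\cap C^{1,\bar 1}(\bar\Omega)$.

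Once $w_1$ has this regularity, after rescaling so that $\|w_1\|_{C^0(\bar\Omega)}=1$, the pair $(\eta_1,w_1)$ is a solution of \eqref{eq1}. The uniqueness statement of Theorem \ref{thm1} then forces $\eta_1=\lambda_1$ and $w_1=\varphi_1$, whence $\varphi_1=\theta w_1$ for a suitable $\theta>0$ in the unnormalized situation, and the Rayleigh quotient formula \eqref{eq:Rquotient} follows at once from Theorem \ref{thmVariational}. The main obstacle is the first regularity step: passing from $w_1\in\mathcal{E}^1$ to $w_1\in L^\infty$ is delicate because the Monge-Amp\`ere density itself depends on $w_1$ through $(-w_1)^n$, so the $L^p$ bound on the density is not automatic. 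The key is to combine the capacity estimate \eqref{eq:CapSublevel} with Ko{\l}odziej's modulus of continuity estimates so as to close the argument without circularity.
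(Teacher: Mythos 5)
Your easy direction ($\eta_1 \leq \lambda_1$) is exactly the paper's first observation. For the converse, however, your plan diverges from the paper and contains a genuine gap. You propose to directly bootstrap $w_1$ from $\mathcal E^1(\Omega)$ to $C^\infty(\Omega) \cap C^{1,\bar 1}(\bar\Omega)$ by invoking Proposition~\ref{prop:GlobalEstimate} ``with a scalar multiple of a defining function as strict subsolution''. But Proposition~\ref{prop:GlobalEstimate} is an \emph{a priori} estimate for solutions already known to lie in $PSH(\Omega)\cap C^4(\Omega)\cap C^2(\bar\Omega)$; it cannot be applied to the weak solution $w_1$ whose regularity is precisely what you are trying to establish. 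To make that plan work you would need an approximation scheme producing smooth solutions with uniform bounds, but constructing such approximants leads right back to needing a subsolution/supersolution pair adapted to the degenerate right-hand side $(-\eta_1 w_1)^n f^n$, and a scalar multiple of a defining function is not a strict subsolution of that equation (the right-hand side vanishes precisely at the boundary). Also, incidentally, the step you flag as ``delicate'' -- getting $w_1\in L^\infty$ -- is actually the easy part: $w_1\in\mathcal E^1(\Omega)\subset L^{n+1}(\Omega)$ by Lemma~\ref{lem:Sob}, so the density $(-\eta_1 w_1)^n f^n$ lies in $L^{(n+1)/n}(\Omega)$ automatically, and Ko{\l}odziej plus Cegrell uniqueness give $w_1\in C^0(\bar\Omega)$ without any circularity.

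The paper sidesteps the regularity upgrade entirely by a contradiction argument that you omit. Assuming $\eta_1<\lambda_1=\mu_1$, Step~3 of the proof of Theorem~\ref{thm1} furnishes a smooth solution $u_{\eta_1}$ of $(dd^c u_{\eta_1})^n=(1-\eta_1 u_{\eta_1})^n f^n\omega^n$ with zero boundary data. Setting $\underline u:=t\,u_{\eta_1}$ with $t=\eta_1\|w_1\|_\infty$, one checks that $\underline u$ is a smooth \emph{strict} subsolution of \eqref{eq:WeakEVP}, that $w_1$ is a continuous supersolution, and that $\underline u\leq w_1$ by the comparison principle. Theorem~\ref{thm:new} then produces a \emph{new} solution $\varphi\in PSH(\Omega)\cap C^\infty(\Omega)\cap C^{1,\bar 1}(\bar\Omega)$ of the eigenvalue problem with parameter $\eta_1$, which after normalisation contradicts the uniqueness in Theorem~\ref{thm1} unless $\eta_1=\lambda_1$. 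The identification $w_1=\theta\varphi_1$ then comes from the separate weak-uniqueness statement Proposition~\ref{pro:unicitefaible}, not from the strong uniqueness in Theorem~\ref{thm1} applied to $w_1$ directly. So the essential idea you are missing is: use the strict inequality $\eta_1<\lambda_1$ to manufacture a tailored strict subsolution via the auxiliary twisted problem, and let Theorem~\ref{thm:new} supply the smooth competitor, rather than attempting to regularise $w_1$ itself.
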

 
 \begin{proof} Since $(\lambda_1,\varphi_1)$ is the solution of  the eigenvalue problem \eqref{eq1} given by Theorem \ref{thm1}, it follows that 
 $$
 \int_\Omega (-\varphi_1) (dd^c \varphi_1)^n = \lambda_1^n \int_\Omega (-\varphi_1)^{n+1} f^n \omega^n.
 $$
 By the formula \eqref{eqR} we see that $\lambda_1 \geq \eta_1$. 
 
 We first prove that $\eta_1 = \lambda_1$. We argue by contradiction. Assume that $\eta_1 < \lambda_1=\mu_1$. From  step 3 of the proof of Theorem \ref{thm1}, it follows that there exists $u_{\eta_1} \in PSH (\Omega) \cap C^{2}(\bar \Omega)$ such that
 $$
 (dd^c u_{\eta_1})^n = (1- \eta_1 u_{\eta_1})^n f^n \omega^n, \, \, u_{\eta_1} = 0 \, \, \text{in} \, \, \partial \Omega.
 $$
  Moreover $w_1$ is a weak (super)-solution to the problem \eqref{eq:WeakEVP}.  We want to compare $u_{\eta_1}$ and $w_1$ by applying the comparison principle. 
    
  We first claim that  $w_1$ is bounded in $\Omega$ and $w_1 = 0$ in $\partial \Omega$. Indeed by Lemma \ref{lem:Sob}, we have $ \mathcal E^1(\Omega) \subset L^{n+1} (\Omega)$, hence $w_1 \in L^{n+1} (\Omega)$. 
  Set $p := \frac{n+1}{n} > 1$. Since $f$ is bounded in $\Omega$, it follows that
  $$
  \int_\Omega [(-\eta_1 w_1)^n f^n]^p \omega^n \leq\int_\Omega (-\eta_1 w_1)^{n+1} f^{n+1} \leq \Vert \eta_1 f\Vert_{L^{\infty}(\Omega)}^{n+1} \int_\Omega (-w_1)^{n+1} \omega^n < \infty.
  $$
  This means that the density $g:= (-\eta_1 w_1)^n f^n \in L^p(\Omega)$ with $p > 1$. By  a theorem of Kolodziej \cite[Theorem 3]{Kol96}, there exists a unique function $v \in PSH (\Omega) \cap C^0(\bar \Omega)$ such that $(dd^c v)^n = (-\eta_1 w_1)^n f^n \omega^n$ on $\Omega$ and $v = 0$ in $\partial \Omega$. 
  Moreover we also have $ v \in \mathcal E^1 (\Omega)$. This means that we have two weak solutions $v, w_1 \in \mathcal E^1(\Omega)$ of the complex Monge-Ampère equation 
  $(dd^c \phi)^n = (-\eta_1 w_1)^n f^n \omega^n$.  By the uniqueness theorem of Cegrell \cite[Theorem 6.2]{Ceg98}, it follows that $w_1 = v  \in C^0(\bar \Omega)$ and $w_1 = 0$ in $\partial \Omega$. This proves our claim.
  
Now we have  $(dd^c w_1)^n = (-\eta_1 w_1)^n f^n \omega^n \leq t^n f^n \omega^n$ on $\Omega$, where $t := \eta_1 M$ and $M := \max_{\bar \Omega} w_1$. Moreover we also  have 
$ (dd^c u_{\eta_1})^n \geq f^n \omega^n$ on $\Omega$ and then  $(dd^c w_1)^n  \leq (dd^c t u_{\eta_1})^n$  on $\Omega$. Since $t u_{\eta_1} = 0 = w_1$ in $\partial \Omega$, it follows from  the comparison principle that $\underline u :=  t u_{\eta_1} \leq  w_1$ in $\Omega$. 
 
  Observe that  
 \begin{eqnarray*} 
 (dd^c \underline u)^n &= & t^n (1- \eta_1 u_{\eta_1})^n  f^n \omega^n \\
 &\geq & t^n \left(1 + (-\eta_1 u_{\eta_1})^n\right) f^n \omega^n \\
 & = & ((- \eta_1 \underline u)^n + t^n) f^n \omega^n.
\end{eqnarray*}
  Hence $ \underline u$ is a smooth strict subsolution to the problem \eqref{eq:WeakEVP} and $w_1$ is a continuous supersolution to the problem \eqref{eq:WeakEVP} such that $\underline u \leq w_1$ in $\Omega$. 
   It follows from Theorem \ref{thm:new} that there exists a solution $\varphi \in PSH (\Omega) \cap C^{1,\bar1} (\bar \Omega)$ to the problem \eqref{eq:WeakEVP} such that $\underline u  \leq \varphi \leq w_1$ in $\Omega$. Therefore if $\theta := \Vert \varphi\Vert_{C^0(\bar \Omega)}$, then $(\eta_1,\theta \varphi)$ is another smooth solution to the eigenvalue problem \eqref{eq1}.  By the uniqueness property in Theorem \ref{thm1}, it follows that $\eta_1 = \lambda_1$.
 This contradiction leads to the conclusion  $\eta_1 = \lambda_1$ . Now the fact that  $w_1 = \theta \varphi_1$ for some positive constant $\theta >0$ follows from the uniqueness property in Proposition \ref{pro:unicitefaible} below.
 \end{proof}

\begin{prop}\label{pro:unicitefaible}  Let $(\lambda_1, \varphi_1)$ be the normalized smooth solution to the eigenvalue problem  \eqref{eq1}. 
Assume that $u\in \mathcal E^1 (\Omega)$ is a weak solution to the following problem
  \begin{equation}\label{eq2}
 \left\lbrace
  \begin{array}{lcr}
 (dd^c u)^n=(-\lambda_1 u)^n f^n\omega^n & \textnormal{on} & \Omega\\
 u=0& \textnormal{ in} & \partial \Omega.\\
 \end{array}
 \right.
 \end{equation}
 Then there exists a positive constant $\theta>0$ such that  $u=\theta \f_1$.
\end{prop}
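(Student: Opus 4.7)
The plan is to adapt the uniqueness argument at the end of the proof of Theorem \ref{thm1} to the weak setting. After upgrading $u$ to $C^0(\bar\Omega)\cap C^\infty(\Omega)$ with a linear boundary decay bound, I would set $t_0:=\sup_\Omega(-u)/(-\varphi_1)$ and show that the nonnegative function $w:=u-t_0\varphi_1$ vanishes identically by combining Lemma \ref{lem2}, the strong maximum principle for a second order linear elliptic operator, and a Hopf-type boundary argument; the trivial case $u\equiv 0$ is discarded.

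The continuity of $u$ on $\bar\Omega$ is established exactly as in the proof of Theorem \ref{thm:Rayleigh}: since $u\in\mathcal E^1(\Omega)$, Lemma \ref{lem:Sob} gives $u\in L^{n+1}(\Omega)$, so $g:=(-\lambda_1 u)^n f^n\in L^{(n+1)/n}(\Omega)$; Ko\l{}odziej's theorem \cite{Kol03} produces a continuous $\mathcal E^1$-solution of $(dd^c v)^n=g\,\omega^n$ with $v|_{\partial\Omega}=0$, and Cegrell's uniqueness \cite[Theorem 6.2]{Ceg98} in $\mathcal E^1$ identifies $u=v$. The maximum principle for plurisubharmonic functions then yields $u<0$ on $\Omega$; interior smoothness $u\in C^\infty(\Omega)$ follows by the complex Evans-Krylov estimate \cite{Wang12} and Schauder bootstrap, as at the end of the proof of Theorem \ref{thm:new}. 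Comparing $u$ with the barrier $C\rho$, where $\rho$ is a smooth defining function with $dd^c\rho\geq\omega$ and $C\geq\lambda_1\|u\|_{C^0(\bar\Omega)}\|f\|_{C^0(\bar\Omega)}$, yields $C\rho\leq u$, hence $-u\leq C'\,d(\cdot,\partial\Omega)$.

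Combined with the Hopf lemma $(-\varphi_1)\geq c_0\,d(\cdot,\partial\Omega)$ for $\varphi_1\in C^{1,\bar 1}(\bar\Omega)$, this shows $t_0\in(0,\infty)$ and that $w\in C^0(\bar\Omega)$ is nonnegative with $w|_{\partial\Omega}=0$. Applying Lemma \ref{lem2} on each $\Omega'\Subset\Omega$ to the $C^\infty(\Omega)$ pair $u,t_0\varphi_1$, both satisfying $(\det(\cdot)_{j\bar k})^{1/n}=-\lambda_1(\cdot)f$, produces $a=a(\Omega')\in\mathcal A(\Omega')$ with
\[
L_a w = -\lambda_1 f w \quad\text{in }\Omega'.
\]
If $w$ attains $0$ at some interior point of $\Omega$, the strong maximum principle for the uniformly elliptic operator $L_a$ forces $w\equiv 0$ on $\Omega'$, hence on $\Omega$ by connectedness, and therefore $u=t_0\varphi_1$ with $\theta=t_0$.

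The hard part is excluding the remaining scenario in which $w>0$ throughout $\Omega$ and $t_0$ is approached only along a sequence $z_k\to z_\ast\in\partial\Omega$. Here I would apply the Hopf lemma to the $L_a$-supersolution $w$ in a small one-sided neighborhood of $z_\ast$ inside some $\Omega'\Subset\Omega$, obtaining $w(z_k)\geq c_1\,d(z_k,\partial\Omega)$ along the sequence; combined with the Lipschitz upper bound $-\varphi_1\leq C''\,d(\cdot,\partial\Omega)$, this gives
\[
\frac{-u(z_k)}{-\varphi_1(z_k)} \;=\; t_0 - \frac{w(z_k)}{-\varphi_1(z_k)} \;\leq\; t_0 - \frac{c_1}{C''} \;<\; t_0,
\]
contradicting the definition of $t_0$. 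The delicate technical point is to ensure that the Hopf lemma for $w$ applies uniformly up to $z_\ast$: this requires suitable local control of the coefficients of $L_a$ and of the regularity of $w$ near $\partial\Omega$, which should follow from the interior $C^\infty$ regularity of $u$ and the $C^{1,\bar 1}(\bar\Omega)$ regularity of $\varphi_1$, together with a standard interior enlargement of the linearization domain.
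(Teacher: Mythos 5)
Your preliminary reductions are sound, but the step you flag as ``the hard part'' is a genuine gap rather than a technical nuisance, and it is precisely the obstruction the paper's proof is built to avoid. Lemma~\ref{lem2} applied to the pair $(u,t_0\varphi_1)$ produces coefficients $a_{j\bar k}=\int_0^1[\det x(t)]^{1/n-1}\,\tilde{x}_{j\bar k}(t)\,dt$ with $x(t)=t(u_{j\bar k})+(1-t)t_0((\varphi_1)_{j\bar k})$, which involve the complex Hessian of $u$ and not just of $\varphi_1$. You have only established $u\in C^0(\bar\Omega)\cap C^\infty(\Omega)$: interior smoothness gives no uniform bound on $D^2u$ near $\partial\Omega$, and moreover $\det(u_{j\bar k})=(-\lambda_1 u)^n f^n\to 0$ at the boundary, so the negative power $[\det x(t)]^{1/n-1}$ can blow up there. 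Consequently the coefficients of $L_{a(\Omega')}$ may degenerate as $\Omega'\to\Omega$, and there is no uniform ellipticity or Hopf constant on a fixed one-sided neighborhood of $z_*\in\partial\Omega$; ``standard interior enlargement'' does not resolve this. In Step~7 of the proof of Theorem~\ref{thm1} the analogous argument works only because both competing functions are known a priori to lie in $C^{1,\bar1}(\bar\Omega)$, so Lemma~\ref{lem2} gives a single $a\in\mathcal A(\Omega)$ with bounded coefficients and Proposition~\ref{pro3} applies globally --- exactly the hypothesis that is unavailable for a weak solution $u\in\mathcal E^1(\Omega)$.

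The paper sidesteps this by linearizing \emph{only} at $\varphi_1$. Define $L\phi:=dd^c\phi\wedge(dd^c\varphi_1)^{n-1}/\omega^n$; since $\varphi_1\in PSH(\Omega)\cap C^{1,\bar1}(\bar\Omega)$, the coefficients of $L$ are bounded on $\bar\Omega$, and $\varphi_1$ is the first eigenfunction of $-L$ with weight $G:=(-\lambda_1\varphi_1)^{n-1}f^n$. The mixed Monge--Amp\`ere inequalities of Kolodziej and Dinew give $dd^cu\wedge(dd^c\varphi_1)^{n-1}\geq(-\lambda_1 u)\,G\,\omega^n$ in the sense of currents, i.e.\ $Lu\geq-\lambda_1 u\,G$ \emph{weakly}, with no pointwise second-order information on $u$. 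One then solves the linear Dirichlet problem $Lv=-\lambda_1 u\,G$, $v|_{\partial\Omega}=0$ by the Perron method; since the right-hand side is H\"older continuous on $\bar\Omega$ (one uses \cite{Ch15} or \cite{GKZ08} to upgrade your $C^0$-continuity of $u$ to $C^\alpha(\bar\Omega)$), interior Schauder theory gives $v\in C^{2,\alpha}(\Omega)$. The comparison principle gives $u\leq v\leq0$, hence $Lv=-\lambda_1 u\,G\geq-\lambda_1 v\,G$, and Proposition~\ref{pro3} applied to $v$ yields $v=\theta\varphi_1$; since $G>0$ in $\Omega$, the equation $Lv=-\lambda_1 u\,G$ then forces $u=v=\theta\varphi_1$. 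The auxiliary function $v$ is the idea your argument is missing: it transfers the eigenvalue identity onto a function with the regularity that Proposition~\ref{pro3} and the Hopf lemma require, without ever controlling $D^2u$ near $\partial\Omega$.
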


The main idea of the proof of this result is due to  Chinh H. Lu.

 \begin{proof} 
 First observe that $u \in PSH (\Omega) \cap  C^{\alpha} (\bar\Omega)$ for some $\alpha \in ]0,1[$. Indeed as in the last proof, we see  that the density $g := (-\lambda_1 u)^n f^n \in L^{1+1\slash n} (\Omega)$. Therefore by \cite{Ch15}  (see also \cite{GKZ08}), there exists $ \phi \in PSH (\Omega) \cap  C^{\alpha} (\bar\Omega)$ such that $(dd^c \phi)^n = g \omega^n$ in $\Omega$ and $\phi = 0$ in $\partial \Omega$.
 By the  uniqueness theorem of Cegrell for solutions in $\mathcal E^1(\Omega)$ we conclude that $u = \phi \in PSH (\Omega) \cap  C^{\alpha} (\bar\Omega)$ (see \cite[Theorem 6.2]{Ceg98}).
 
 By the mixed Monge-Amp\`ere inequalities (\cite{Kol03}, \cite{Din09}), we have
 $$
 dd^cu  \wedge  (dd^c \f_1)^{n-1} \geq (-\lambda_1 u f)(- \lambda_1 \f_1 f )^{n-1}\omega^n = (- \lambda_1 u) G \omega^n,
  $$
 weakly on $\Omega$, where  $G := (-\lambda_1 \varphi_1)^{n-1} f^n$ is a smooth function in $\bar \Omega$.
 	
 Let us consider the following linear elliptic second order operator
 $$
  L \phi := dd^c \phi \wedge (dd^c\f_1)^{n-1} \slash \omega^n.	
$$
 Then $L \varphi_1 = (- \lambda_1 \varphi_1) G$, and
 
 \begin{equation}\label{eq4}
  Lu \geq -\lambda_1 u G,
 \end{equation}
 weakly on $\Omega$.
 
  Since  $-\lambda_1 u G \in C^{\alpha} (\bar \Omega)$, it follows from Schauder theory that  the following Dirichlet problem
 	
  \begin{equation}\label{eq5}
  \left\lbrace
 \begin{array}{lcr}
 L v =-\lambda_1 u G & \textnormal{on} & \Omega\\
 v=0& \textnormal{ in} & \partial \Omega,\\
  \end{array}
 		\right.
 	\end{equation}
 admits a solution  $v \in C^{2,\alpha}(\Omega)$. 
 
 Indeed the  operator $L$ is elliptic on $\Omega$ and then it satisfies the maximum principle \cite[Theorem 3.5]{GT98}. Moreover the  operator $L$ is locally uniformly elliptic and its coefficients are locally H\"older continuous on $\Omega$.
 %each bounded subdomain $\Omega' \Subset \Omega$. 
 Therefore we can apply the observation of  \cite[Section 6.6]{GT98}. To be more precise we can apply the Perron method to solve the Dirichlet problem \eqref{eq5} as explained after the statement of \cite[Theorem 6.11]{GT98}. Indeed observe that $u$ is a subsolution to this problem, hence  the upper enveloppe $v$ of all subsolutions to the Dirichlet problem \eqref{eq5} is well defined and is a bounded subsolution such that $u \leq v \leq 0$ in $\Omega$. To show that $v \in C^{2,\alpha}(\Omega)$ is a solution to  the Dirichlet problem \eqref{eq5}, one can use the classical balayage process  using the same arguments as in the proof of \cite[Theorem 2.12]{GT98}, thanks to the compactness result provided by the interior Schauder estimates of  \cite[Corollary 6.3]{GT98}.
  
 By \eqref{eq4}, we have $L u  \geq -\lambda_1u G = L v$, hence  $u\leq v\leq 0$ in $\bar \Omega$ and then $L v \geq -\lambda_1 v G$
 on $\Omega$.
	
 Now applying Proposition \ref{pro3}, we deduce that there exists $\theta > 0$ such that
 $v =\theta \f_1.$ Hence   $L v =-\lambda_1v G$ in $\Omega$ and $v =0$ in $\partial \Omega$.
 	By \eqref{eq5}, we deduce that $u=v$, hence $u=\theta \f_1.$
 \end{proof}

\subsection{The monotonicity property of $\lambda_1 (\Omega)$}
Let $\Omega' \Subset \Omega \Subset \C^n$ be two bounded strongly pseudoconvex domains and $0 < f \in C^{\infty}(\bar \Omega)$. Then we have the following comparison theorem
\begin{theorem} Let $f' := f_{\Omega'}$ be the restriction of $f$ to $\Omega'$. Then we have 
$$
\lambda_1 (\Omega,f) \leq \lambda_1 (\Omega',f').
$$
\end{theorem}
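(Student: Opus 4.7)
The plan is to apply the Rayleigh quotient characterization from Theorem \ref{thm:variational} directly to the eigenfunction of the smaller domain, extended by zero. Let $(\lambda_1(\Omega',f'), \varphi_1')$ denote the normalized eigenpair on $\Omega'$ provided by Theorem \ref{thm1}, and define
$$
\tilde \varphi(z) := \begin{cases} \varphi_1'(z) & \text{if } z \in \Omega',\\ 0 & \text{if } z \in \Omega \setminus \Omega'. \end{cases}
$$
The Rayleigh formula applied to $(\Omega,f)$ gives $\lambda_1(\Omega,f)^n \leq \frac{\int_\Omega (-\tilde\varphi)(dd^c \tilde\varphi)^n}{\int_\Omega (-\tilde\varphi)^{n+1} f^n \omega^n}$, so the whole proof reduces to showing that $\tilde\varphi$ is an admissible competitor and that this quotient equals $\lambda_1(\Omega',f')^n$.

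First I would verify the admissibility $\tilde\varphi \in \mathcal E^1(\Omega)$. Since $\varphi_1' \in C^{1,\bar 1}(\bar \Omega')$ is continuous up to $\partial \Omega'$ with value $0$ there and is negative inside, the standard gluing lemma for plurisubharmonic functions (comparing $\tilde \varphi$ to the constant $0$, which is plurisubharmonic on $\Omega$) yields $\tilde \varphi \in PSH(\Omega) \cap C^0(\bar \Omega)$; moreover $\tilde\varphi \equiv 0$ in the open neighborhood $\Omega \setminus \bar \Omega'$ of $\partial \Omega$, so $\tilde \varphi$ is bounded and has boundary value $0$ on $\partial \Omega$. The total Monge-Amp\`ere mass is finite because it is concentrated in the compact set $\bar \Omega'$ and can be controlled there by Chern-Levine-Nirenberg. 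Hence $\tilde\varphi \in \mathcal E^0(\Omega) \subset \mathcal E^1(\Omega)$ and is not identically zero.

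Next I would compute the two integrals using locality of the Bedford-Taylor operator on locally bounded plurisubharmonic functions: on $\Omega'$ we have $(dd^c \tilde\varphi)^n = (dd^c \varphi_1')^n$, and on $\Omega \setminus \bar \Omega'$ the measure vanishes. The only delicate part is the behavior on $\partial \Omega'$, where the gradient of $\tilde\varphi$ may jump; however, since $\tilde\varphi \equiv 0$ on $\partial \Omega'$, any possible boundary contribution in $(dd^c \tilde\varphi)^n$ is integrated against $(-\tilde\varphi) = 0$ and thus drops out. Consequently
$$
\int_\Omega (-\tilde\varphi)(dd^c \tilde\varphi)^n = \int_{\Omega'} (-\varphi_1')(dd^c \varphi_1')^n, \qquad \int_\Omega (-\tilde\varphi)^{n+1} f^n \omega^n = \int_{\Omega'} (-\varphi_1')^{n+1} (f')^n \omega^n.
$$
Using the eigenvalue equation $(dd^c \varphi_1')^n = (-\lambda_1(\Omega',f') \varphi_1')^n (f')^n \omega^n$ on $\Omega'$, the ratio of these two quantities is exactly $\lambda_1(\Omega',f')^n$, and the conclusion $\lambda_1(\Omega,f) \leq \lambda_1(\Omega',f')$ follows from the Rayleigh inequality.

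The only genuinely nontrivial point is the plurisubharmonic gluing together with the claim that the $\partial \Omega'$-part of the Monge-Amp\`ere measure contributes nothing to the numerator. Both are standard once one exploits the fact that $\tilde\varphi$ vanishes continuously on $\partial \Omega'$, so I expect no serious obstacle, merely careful bookkeeping with the definitions from Section~5.
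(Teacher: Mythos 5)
Your proposal contains a genuine gap at its very first substantive step: the function $\tilde\varphi$ obtained by extending $\varphi_1'$ by zero is \emph{not} plurisubharmonic on $\Omega$, and the gluing lemma does not say what you claim it says. The gluing lemma for plurisubharmonic functions produces, from $u \in PSH(\Omega')$ and $v \in PSH(\Omega)$ with $\limsup_{z\to\zeta} u(z) \leq v(\zeta)$ on $\partial\Omega' \cap \Omega$, the function equal to $\max(u,v)$ on $\Omega'$ and to $v$ on $\Omega\setminus\Omega'$. With $u = \varphi_1' \leq 0$ and $v \equiv 0$, the glued function is $\max(\varphi_1',0) \equiv 0$, not $\varphi_1'$. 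Your $\tilde\varphi$ actually fails the sub-mean-value inequality at every $\zeta \in \partial\Omega'$: there $\tilde\varphi(\zeta) = 0$, while any small sphere around $\zeta$ meets the open set $\{\varphi_1' < 0\}$ on a set of positive measure, so the spherical mean of $\tilde\varphi$ is strictly negative. Equivalently, the normal derivative of $\tilde\varphi$ jumps downward across $\partial\Omega'$ (by Hopf's lemma $\partial_\nu\varphi_1' < 0$ from inside, while it is $0$ from outside), so $dd^c\tilde\varphi$ carries a negative singular contribution on $\partial\Omega'$. Everything downstream of the admissibility claim therefore collapses.

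The reason the argument \emph{cannot} be patched by simply gluing is exactly the one your failed attempt exposes: a competitor on $\Omega$ that agrees with $\varphi_1'$ on $\Omega'$ would have to dip \emph{below} $\varphi_1'$ near $\partial\Omega'$ to remain plurisubharmonic. This is precisely what the paper's proof does via the subextension theorem of Cegrell--Ko\l{}odziej--Zeriahi \cite{CKZ11}: starting from an extremal $w' \in \mathcal E^1(\Omega')$, it produces $w \in \mathcal E^1(\Omega)$ with $w \leq w'$ on $\Omega'$ and $E_\Omega(w) \leq E_{\Omega'}(w')$. The inequality $w \leq w'$ makes $(-w)^{n+1} \geq (-w')^{n+1}$ on $\Omega'$, so $I_{\Omega,f}(w) \geq I_{\Omega',f'}(w')$, while the energy does not increase; hence the Rayleigh quotient on $\Omega$ of $w$ is no larger than that of $w'$ on $\Omega'$, giving $\lambda_1(\Omega,f) \leq \lambda_1(\Omega',f')$. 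The crucial input you are missing is this quantitative subextension result; a naive extension by zero does not provide a legitimate competitor.
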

\begin{proof}
We denote by $E_{\Omega} (u) := \frac{1}{n+1} \int_\Omega (-u) (dd^c u)^n$ for $u \in \mathcal E^1(\Omega)$ and 
$I_{\Omega,f} :=  \frac{1}{n+1} \int_\Omega (- u)^{n +1} f^n d V$.

By Theorem \ref{thm:Rayleigh} there exists $w' \in \mathcal E^1(\Omega')$ such that
$$
\lambda_1 (\Omega',f_{\Omega'}) = \frac{E_{\Omega'}(w')}{I_{\Omega',f'} (w')}.
$$
By the subextension Theorem from \cite{CKZ11}, it follows that there exists $w \in \mathcal E^1(\Omega)$ such that $w \leq w'$ in $\Omega'$ and $ E_{\Omega}(w) \leq E_{\Omega'}(w')$. Since $w \leq w'$ in $\Omega'$, it follows that $I_{\Omega',f'} (w') \leq  I_{\Omega,f} (w)$, hence
$$
 \frac{E_{\Omega'}(w')}{I_{\Omega',f'} (w')} \geq  \frac{E_{\Omega}(w)}{I_{\Omega,f} (w)}.
 $$
 By Theorem \ref{thm:Rayleigh} we conclude that $ \lambda_1 (\Omega',f') \geq \lambda_1 (\Omega,f).$
\end{proof}

From \eqref{eq:lowerbound} we deduce the following lower bound for the first eigenvalue.
\begin{coro} Let $  \Omega \Subset \C^n$ be a bounded strongly pseudoconvex domain and $R := {diam}(\Omega) \slash 2$. Then the eigenvalue  $\lambda_1(\Omega) = \lambda_1(\Omega,1)$ satisfies the following estimate :
$$
\lambda_1(\Omega)  \geq R^{-2}.
$$
\end{coro}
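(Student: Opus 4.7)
The plan is to combine directly the monotonicity theorem just proved with the explicit lower bound \eqref{eq:lowerbound} established in Remark~4.3 for the first eigenvalue of an euclidean ball.

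First, following the convention introduced in Section~2.1, I would choose a point $a \in \C^n$ such that $\Omega$ is contained in the ball $B(a,R)$ of radius $R = \mathrm{diam}(\Omega)/2$. To apply the monotonicity theorem, which requires strict inclusion, I would fix $\varepsilon>0$ and work with the slightly enlarged ball $B(a,R+\varepsilon)$, for which $\Omega \Subset B(a,R+\varepsilon)$ holds.

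Second, taking $\Omega' = \Omega$ and the outer domain to be $B(a,R+\varepsilon)$ in the preceding Theorem (with $f \equiv 1$, whose restriction is again $1$), the monotonicity gives
$$
\lambda_1\bigl(B(a,R+\varepsilon)\bigr) \;\leq\; \lambda_1(\Omega).
$$
On the other hand, \eqref{eq:lowerbound} (obtained by testing against the explicit supersolution $u_0(z)=|z-a|^2-(R+\varepsilon)^2$ to $(dd^c u_0)^n=\omega^n$ on $B(a,R+\varepsilon)$, whose $C^0$-norm is $(R+\varepsilon)^2$) provides the quantitative bound
$$
\lambda_1\bigl(B(a,R+\varepsilon)\bigr) \;\geq\; (R+\varepsilon)^{-2}.
$$
Chaining the two inequalities yields $\lambda_1(\Omega) \geq (R+\varepsilon)^{-2}$ for every $\varepsilon>0$, and letting $\varepsilon\to 0^+$ gives $\lambda_1(\Omega)\geq R^{-2}$.

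There is no real obstacle: the entire content is bundled inside the monotonicity theorem (whose proof relies on the subextension result of \cite{CKZ11} applied to the variational characterization in Theorem~\ref{thm:Rayleigh}) and the explicit test–function computation already carried out in Remark~4.3. The only mildly delicate point is the passage to strict inclusion $\Omega\Subset B(a,R+\varepsilon)$, which is handled by the elementary limit $\varepsilon\to 0^+$.
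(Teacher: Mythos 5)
Your proof is correct and follows exactly the path the paper intends: enclose $\Omega$ in a ball of radius $R$ (up to an $\varepsilon$-enlargement to guarantee $\Omega\Subset B(a,R+\varepsilon)$), apply the monotonicity theorem $\lambda_1(B(a,R+\varepsilon))\leq\lambda_1(\Omega)$, invoke the explicit bound $\lambda_1(B(a,R+\varepsilon))\geq(R+\varepsilon)^{-2}$ from Remark 4.3, and let $\varepsilon\to 0$. The $\varepsilon$-dilation is a sensible way to reconcile the strict inclusion $\Subset$ required by the monotonicity theorem with the paper's convention that $B(a,R)$ merely contains $\Omega$; otherwise the argument is precisely the one the paper gestures at with ``From \eqref{eq:lowerbound} we deduce\ldots''.
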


\smallskip

%{\bf Dedication :}  This work is dedicated to Professor Jean-Pierre Demailly for his profound contributions to Complex Analysis and Pluripotential Theory with deep applications to K\"ahler Geometry and complex Algebraic Geometry.

\smallskip

\end{document}